\theoremstyle{plain}
\newtheorem{lemma}{Lemma}[section]
\newtheorem*{theorem*}{Theorem}
\newtheorem*{lemma*}{Lemma}
\newtheorem*{proposition*}{Proposition}
\newtheorem*{conjecture*}{Conjecture}
\newtheorem*{corollary*}{Corollary}
\newtheorem*{problem*}{Problem}
\newtheorem{theorem}[lemma]{Theorem}
\newtheorem{conjecture}[lemma]{Conjecture}
\newtheorem{corollary}[lemma]{Corollary}
\newtheorem{proposition}[lemma]{Proposition}
\theoremstyle{definition}
\newtheorem{definition}[lemma]{Definition}
\newtheorem{remark}[lemma]{Remark}
\newcommand{\F}[1]{\mathscr{#1}}
\newcommand{\fto}[1]{\stackrel{#1}{\to}}
\newcommand{\Z}{\mathbb{Z}}
\newcommand{\N}{\mathbb{N}}
\newcommand{\C}{\mathbb{C}}
\newcommand{\Q}{\mathbb{Q}}
\newcommand{\R}{\mathbb{R}}
\newcommand{\OO}{\mathcal{O}}
\newcommand{\te}{\otimes}
\newcommand{\sm}{\setminus}
\newcommand{\id}{\mathrm{id}}
\newcommand{\cF}{\mathcal F}
\newcommand{\cQ}{\mathcal Q}
\newcommand{\cA}{\mathcal A}
\newcommand{\cM}{\mathcal M}
\renewcommand{\P}{\mathbb{P}}
\DeclareMathOperator{\ch}{ch}
\DeclareMathOperator{\Hom}{Hom}
\DeclareMathOperator{\Pic}{Pic}
\DeclareMathOperator{\Eff}{Eff}
\DeclareMathOperator{\rk}{rk}
\DeclareMathOperator{\Ext}{Ext}
\DeclareMathOperator{\ev}{ev}
\DeclareMathOperator{\Coh}{Coh}
\DeclareMathOperator{\sHom}{\mathcal{H} \textit{om}}
\DeclareMathOperator{\SL}{SL}
\DeclareMathOperator{\coh}{coh}
\DeclareMathOperator{\udim}{\underline{\dim}}
\begin{document}

\date{\today}
\author{Jack Huizenga}
\address{Department of Mathematics\\University of Illinois at Chicago, Chicago, IL 60607}
\email{huizenga@math.uic.edu}
\thanks{This material is based upon work supported under a National Science Foundation Graduate Research Fellowship and a National Science Foundation Mathematical Sciences Postdoctoral Research Fellowship}
\subjclass[2010]{Primary: 14C05. Secondary: 14E30, 14J60, 13D02}

\title[Effective divisors on Hilbert schemes]{Effective divisors on the Hilbert scheme of points in the plane and interpolation for Stable bundles}

\begin{abstract}
We compute the cone of effective divisors on the Hilbert scheme of $n$ points in the projective plane.  We show the sections of many stable vector bundles satisfy a natural interpolation condition, and that these bundles always give rise to the edge of the effective cone of the Hilbert scheme.  To do this, we give a generalization of Gaeta's theorem on the resolution of the ideal sheaf of a general collection of $n$ points in the plane.  This resolution has a natural interpretation in terms of Bridgeland stability, and we observe that ideal sheaves of collections of points are destabilized by exceptional bundles.  By studying the Bridgeland stability of exceptional bundles, we also show that our computation of the effective cone of the Hilbert scheme is consistent with a conjecture in \cite{ABCH} which predicts a correspondence between Mori and Bridgeland walls for the Hilbert scheme.
\end{abstract}

\maketitle

\newcommand{\spacing}[1]{\renewcommand{\baselinestretch}{#1}\large\normalsize}
 \setcounter{tocdepth}{1}
\tableofcontents

\section{Introduction}

For a projective variety $X$, the Hilbert scheme $X^{[n]}$ parameterizes length $n$ zero-dimensional subschemes of $X$.  When $X$ is a smooth surface, the Hilbert scheme is a useful compactification of the open symmetric product $(X^n\setminus \Delta)/S_n$ parameterizing distinct collections of $n$ points.  By a result of Fogarty, in this case $X^{[n]}$ is a smooth projective variety of dimension $2n$, and $X^{[n]}\to X^n/S_n$ resolves the singularities of the ordinary symmetric product \cite{Fogarty1}. 

An interesting topic in birational geometry is to describe the various birational models of moduli or parameter spaces.  It is often the case that these models themselves admit interesting modular interpretations, and describing these alternate compactifications is of particular interest.  In \cite{ABCH}, the problem of carrying out the minimal model program for the Hilbert scheme $\P^{2[n]}$ is discussed.

A first step in carrying out the minimal model program for the Hilbert scheme $\P^{2[n]}$ is to describe the full cone of effective divisors.  In this paper, we compute the cone for every $n$, building on results from \cite{HuizengaPaper} where a partial answer was obtained.  Along the way, we will be led to consider natural interpolation questions for stable vector bundles.  We will also develop a generalization of Gaeta's theorem on resolutions of ideal sheaves of general collections of points in $\P^2$.  This generalization will illuminate many cohomological properties of such ideal sheaves.

The Picard group of $\P^{2[n]}$ has rank $2$, and is generated over $\Z$ by classes $H$ and $\Delta/2$, where $H$ is the locus of schemes meeting a fixed line and $\Delta$ is the locus of nonreduced schemes \cite{Fogarty2}.  The boundary divisor $\Delta$ always spans one edge of the cone $\Eff\P^{2[n]}$ of effective divisors (\cite{ABCH,HuizengaPaper,thesis}).  For many values of $n$ it is easy describe the other edge of this cone.  For instance, if $n = {r+2 \choose 2}$ is a triangular number, then there is a divisor given as the locus of schemes which lie on some curve of degree $r$, and this divisor spans the edge of the cone.  Alternately, the divisor is described as the locus of schemes which fail to impose independent conditions on sections of the line bundle $\OO_{\P^2}(r)$.  Generalizing this construction to allow more arbitrary vector bundles instead of only line bundles allows us to construct the nontrivial edge of $\Eff \P^{2[n]}$ for every $n$.
 
\subsection{Interpolation for vector bundles} Let $E$ be a vector bundle of rank $r$ on a smooth curve or surface $X$.  We say $E$ \emph{satisfies interpolation for $n$ points} if a general collection $Z$ of $n$ points imposes $rn$ conditions on sections of $E$, i.e. if $$h^0(E\te I_Z) = h^0(E)-rn.$$ This forces $h^0(E)\geq rn$; let $W\subset H^0(E)$ be a general subspace of dimension $rn$.  Then the locus of $Z\in X^{[n]}$ which fail to impose independent conditions on sections in $W$ forms an effective divisor $D_E(n)$ in $X^{[n]}$.

In the particular case where $X=\P^{2}$, one sees by an elementary Grothendieck-Riemann-Roch calculation (see \cite{ABCH,HuizengaPaper,thesis}) that if $E$ is a vector bundle which satisfies interpolation for $n$ points then the divisor $D_E(n)$ has class $$[D_E(n)]=c_1(E)-\rk(E)\frac{\Delta}{2}.$$ If we let $\mu(E) = c_1(E)/\rk(E)$ be the \emph{slope}, then this class spans the ray $$\mu(E)-\frac{1}{2}\Delta.$$ We are thus led to determine the minimum possible slope $\mu$ of a vector bundle satisfying interpolation for $n$ points.  We will see that the extremal edge of the effective cone can always be realized as a divisor associated to a vector bundle in this way.  Furthermore, the minimum slope $\mu$ and vector bundles of slope $\mu$ with interpolation can be explicitly described.

\subsection{Stable vector bundles and interpolation}
The key to determining the minimum slope $\mu$ of a vector bundle with interpolation for $n$ points lies in considering \emph{stable} vector bundles.  A vector bundle $E$ is \emph{(slope)-stable} if every coherent subsheaf $F\subset E$ with $0<\rk F< \rk E$ has $\mu(F)<\mu(E)$.  Our general expectation is that a stable bundle $E$ of rank $r$ typically satisfies interpolation for $n$ points so long as it has at least $rn$ sections.  The number of sections of a general stable bundle $E$ of slope $\mu(E)\geq 0$ is just $\chi = \chi(E)$ by \cite{GottscheHirschowitz}, so it is natural to try and determine the minimum possible slope of a stable bundle with the property $\chi\geq rn$.  A priori it could happen that no such minimum exists, since the infimum of the slopes of such bundles could be irrational.  This, however, is not the case.

\begin{theorem}\label{minExistTheorem}
For a fixed nonnegative integer $n$, the set of nonnegative slopes of stable bundles on $\P^2$ satisfying $\chi/r \geq n$ has a minimum $\mu$.
\end{theorem}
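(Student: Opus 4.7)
My plan is to combine the Riemann--Roch formula on $\P^2$ with the Dr\'ezet--Le Potier classification of stable bundles. For a rank $r$ sheaf $E$ on $\P^2$, introduce the discriminant $\Delta(E) = \tfrac{1}{2}\mu(E)^2 - \ch_2(E)/r$. Riemann--Roch then takes the form
\[ \chi(E)/r = P(\mu(E)) - \Delta(E), \qquad P(\mu) := \tfrac{1}{2}(\mu+1)(\mu+2), \]
so the inequality $\chi/r \geq n$ is equivalent to $\Delta(E) \leq P(\mu(E)) - n$. On the other hand, Dr\'ezet--Le Potier provides a continuous function $\delta : \R \to \R_{\geq 0}$ such that $\Delta(E) \geq \delta(\mu(E))$ for every semistable sheaf $E$, with equality possible only for exceptional bundles. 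Combining the two inequalities, every slope attained by a stable bundle with $\chi/r \geq n$ lies in the closed set
\[ T = \{\mu \geq 0 : P(\mu) - \delta(\mu) \geq n\}. \]

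Because $\delta$ is bounded while $P(\mu) \to +\infty$, the set $T$ is a nonempty closed subset of $[0,\infty)$ and hence has a minimum $\mu^*$. The remaining task is to show that $\mu^*$ is actually realized as the slope of a stable bundle with $\chi/r \geq n$.

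To do this I would use the explicit description of $\delta$: on $[0,\infty)$ it is a piecewise polynomial function whose pieces and local maxima are controlled by the slopes of exceptional bundles on $\P^2$. The left endpoint of the connected component of $T$ containing $\mu^*$ is either $0$ or a root of $P(\mu) - \delta(\mu) = n$, and by inspecting the explicit quadratic form of $\delta$ on each branch one checks that this root is rational. Writing $\mu^* = p/r_0$ in lowest terms, the existence part of Dr\'ezet--Le Potier produces stable bundles of slope $\mu^*$ with any admissible rational discriminant $\geq \delta(\mu^*)$; in particular, for $r$ a sufficiently large multiple of $r_0$ one can choose an integer value of $\ch_2$ yielding $\Delta \in [\delta(\mu^*), P(\mu^*) - n]$, giving the required bundle.

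The principal obstacle is the rationality of $\mu^*$, which requires entering the combinatorial description of the Dr\'ezet--Le Potier curve near its local maxima. The passage from this real-analytic minimum to an actual stable bundle is then routine given the existence theorem.
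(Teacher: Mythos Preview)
Your overall strategy matches the paper's: reduce to studying $\gamma(\mu) = P(\mu) - \delta(\mu)$ and show that $\gamma^{-1}(n)$ is rational. However, there is a genuine gap at the decisive step.

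You write that $\mu^*$ is ``a root of $P(\mu) - \delta(\mu) = n$, and by inspecting the explicit quadratic form of $\delta$ on each branch one checks that this root is rational.'' This presupposes that $\mu^*$ lies on some branch, i.e.\ in one of the open intervals $I_\alpha = (\alpha - x_\alpha, \alpha + x_\alpha)$ on which $\delta$ is given by an explicit formula. But the complement $C = \R \setminus \bigcup_\alpha I_\alpha$ is an uncountable Cantor-type set on which $\delta \equiv \tfrac{1}{2}$. If $\mu^* \in C$, the equation $\gamma(\mu^*) = n$ becomes $P(\mu^*) - \tfrac{1}{2} = n$, giving $\mu^* = \tfrac{1}{2}(-3 + \sqrt{5+8n})$, which is generically irrational. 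Nothing in your outline rules this out, and the paper explicitly warns that piecewise rationality on the branches is not by itself sufficient.

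The paper's resolution is substantially harder than ``inspecting the branches'': Section~\ref{numTheorySection} develops the continued-fraction expansions of exceptional slopes, proving that every partial quotient is a $1$ or a $2$, and deduces (Corollary~\ref{repeatingFracCor}) that $\tfrac{1}{2}(-3+\sqrt{D})$ lies in some $I_\alpha$ for every rational $D > 5$. This is what forces $\mu^* \notin C$; only then does rationality follow from the (in fact piecewise \emph{linear}) form of $\gamma$ on $I_\alpha$.

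A secondary imprecision: exceptional bundles do \emph{not} satisfy $\Delta = \delta(\mu)$; they have $\Delta_\alpha < \tfrac{1}{2} \leq \delta(\alpha)$, so your inequality $\Delta(E) \geq \delta(\mu(E))$ fails for them outright. They must be handled separately, as the paper does in Lemma~\ref{exceptionalIneqLemma} and Theorem~\ref{minSlopeThm}, comparing the non-exceptional minimum $\gamma^{-1}(n)$ against the finitely many exceptional slopes below it.
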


The number $\mu$ can be explicitly computed for any given $n$, as we will see in Section \ref{gammaSection}.  The main difficulty is understanding when a given moduli space $M(r,c,d)$ of stable vector bundles with Chern character $(r,c,d)$ is nonempty.  The answer to this question is well-known, but depends intimately on the geometry of the \emph{exceptional bundles} on $\P^2$; these are the rigid stable bundles, i.e. the stable bundles $E$ with $\Ext^1(E,E)=0$.

It is particularly important to understand the set $\F E$ of slopes of exceptional bundles on $\P^2$.  By results of Drezet and Le Potier \cite{Drezet,DLP,LePotierLectures}, for any rational number $\mu\in \Q$ there is an \emph{associated exceptional slope} $\alpha\in \F E$ such that the existence of stable vector bundles with slope $\mu$ is controlled by the (unique) exceptional bundle $E_\alpha$ of slope $\alpha$.  Theorem \ref{minExistTheorem} follows from a study of the number theory of exceptional slopes.  These slopes have many interesting properties; we highlight one result in this direction here.
 
 \begin{theorem}
 Let $\alpha\in \F E$ be the slope of an exceptional bundle on $\P^2$.  Every term in the even-length continued fraction expansion of the fractional part of $\alpha$ is a one or a two.  Furthermore, these terms form a palindrome.
 \end{theorem}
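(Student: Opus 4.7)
The plan is to leverage the Drezet--Le Potier parameterization $\varepsilon\colon \Z[1/2]\to \F E$, which places the exceptional slopes in bijection with the dyadic rationals via a tree structure: $\varepsilon(n)=n$ for $n\in\Z$, and the midpoint $\varepsilon((2p+1)/2^{q+1})$ is the unique exceptional slope arising between the adjacent pair $\varepsilon(p/2^q)<\varepsilon((p+1)/2^q)$ through the exceptional-pair operation. Since $\varepsilon(\alpha+1)=\varepsilon(\alpha)+1$, the claim reduces to studying fractional parts, so it suffices to analyze exceptional slopes in $[0,1]$.

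I would prove by induction on the depth $q$ the following joint statement: whenever $\alpha<\beta$ are the two endpoints of a dyadic interval at level $q$, their fractional parts admit even-length continued-fraction expansions $\bar\alpha=[0;a_1,\ldots,a_{2k}]$ and $\bar\beta=[0;b_1,\ldots,b_{2\ell}]$, each of which is a palindrome with entries in $\{1,2\}$, and furthermore the outermost partial quotients satisfy a compatibility constraint (for instance, controlling whether $a_{2k}$ and $b_1$ equal $1$ or $2$) strong enough to make the inductive step go through. The base case is $\alpha=0$, $\beta=1$, whose fractional parts are both empty (vacuously palindromic in $\{1,2\}$) and whose midpoint $1/2=[0;1,1]$ satisfies the conclusion.

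For the inductive step the key input is an explicit formula describing how the continued fraction of the midpoint $\gamma=\alpha.\beta$ arises from those of $\alpha$ and $\beta$. Using the DLP formula for $\gamma$ together with the unimodularity $|p'q-pq'|=1$ for adjacent exceptional slopes written in lowest terms $\alpha=p/q$, $\beta=p'/q'$ (this unimodularity itself propagates through the induction), one verifies that the fractional part of $\gamma$ has continued fraction obtained as a suitable concatenation of $(a_1,\ldots,a_{2k})$ with the reversed sequence $(b_{2\ell},\ldots,b_1)$, possibly after absorbing a boundary term where the two halves meet. The palindromic hypothesis on $\bar\alpha$ and $\bar\beta$, combined with the boundary compatibility, then forces the concatenation to itself be a palindrome in $\{1,2\}$ of strictly larger even length.

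The main obstacle is precisely the boundary-quotient bookkeeping at the seam: one must show that gluing the reverse of $\bar\beta$ onto $\bar\alpha$ never produces a partial quotient as large as $3$, and that the resulting sequence is still palindromic. This requires strengthening the inductive hypothesis to carry extra structural information about the outermost partial quotients of consecutive exceptional slopes, so that the two endpoints of every new dyadic subinterval created at level $q+1$ again satisfy the refined hypothesis. The rest of the argument is then routine induction: each new midpoint inherits its palindromic structure from the concatenation formula, and the continued fraction grows in a controlled way.
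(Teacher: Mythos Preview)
Your strategy is the paper's: induction on the dyadic depth $q$, building the continued fraction of $\alpha.\beta$ as a concatenation of those of $\alpha$ and $\beta$ joined across a seam, with a strengthened inductive hypothesis controlling what happens at that seam. But you stop exactly where the content is: you say the hypothesis must be strengthened to carry ``extra structural information about the outermost partial quotients,'' acknowledge the seam is the obstacle, and then declare the rest routine. It is not routine until the strengthening is named and the seam formulas are written down and checked; that is the proof.

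The paper's strengthening is precise: every maximal block of $1$'s has even length, and every maximal block of $2$'s not touching an end has even length. With this, there are only two cases for the seam. If $\beta=[0;b_1,\ldots,b_{l-1},2]$ then
\[
\alpha.\beta=[0;b_1,\ldots,b_{l-1},1,1,2,a_1,\ldots,a_k],
\]
while if $\beta=[0;b_1,\ldots,b_{l-2},1,1]$ then
\[
\alpha.\beta=[0;b_1,\ldots,b_{l-2},2,2,a_1,\ldots,a_k].
\]
In either case one checks the value equals $[0;b_1,\ldots,b_l,-(3+\alpha)]$, and then the identity $\beta-\alpha.\beta=1/(r_\beta^2(3+\alpha-\beta))$ together with the palindrome hypothesis on $\beta$ (which forces $q_{l-1}=p_l$) finishes the computation. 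The palindrome property for $\alpha.\beta$ is not read off from symmetry of the concatenation; it is proved by reversing the word, showing the reversed word equals $[0;a_1,\ldots,a_k,3-\beta]$, and computing that this is again $\alpha.\beta$ via the dual identity $\alpha.\beta-\alpha=1/(r_\alpha^2(3+\alpha-\beta))$ and the palindrome hypothesis on $\alpha$. The parity conditions are then visibly inherited by the new word, so the strengthened hypothesis propagates.

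One further gap: your base case $\alpha=0$, $\beta=1$ does not suffice as stated, because whenever $\beta=1$ its even-length expansion has integer part $1$, not $0$, so the concatenation formula does not apply directly. The paper handles all $\varepsilon(1-2^{-k})$ separately by computing them as $F_{2k}/F_{2k+1}=[0;1,\ldots,1]$ with $2k$ ones.
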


\subsection{A generalization of Gaeta's theorem}
Let us recall Gaeta's theorem on the resolution of the ideal sheaf of $n$ general points in $\P^2$ (see \cite{Eisenbud}).  Let $Z$ be a general collection of $n$ points in $\P^2$, and write $$n = \frac{r(r+1)}{2}+s \qquad (0\leq s\leq r);$$ there is a unique such decomposition.  Then the ideal sheaf $I_Z$ admits one of the following two resolutions, depending on whether $2s\leq r$ or $2s\geq r$:
$$0\to \OO_{\P^2}(-r-1)^{r-2s} \oplus\OO_{\P^2}(-r-2)^{s}\to  \OO_{\P^2}(-r)^{r-s+1}\to I_Z\to 0\hphantom{.}$$
$$0\to \OO_{\P^2}(-r-2)^s \to \OO_{\P^2}(-r)^{r-s+1}\oplus \OO_{\P^2}(-r-1)^{2s-r} \to I_Z\to 0.$$
Let us focus on the case where $2s\geq r$; a similar picture applies in the other case.  We have $r-s+1 = \dim \Hom(\OO_{\P^2}(-r),I_Z)$, so define a sheaf $W$ to be the kernel of the canonical map $$ \OO_{\P^2}(-r)\te \Hom(\OO_{\P^2}(-r),I_Z)\to I_Z.$$ Assuming the canonical map is surjective (which it is if say $s \leq r-2$), $W$ will be a vector bundle with resolution $$0 \to W \to \OO_{\P^2}(-r-2)^s\to \OO_{\P^2}(-r-1)^{2s-r}\to 0,$$ where the map is the same one as in the resolution of $I_Z$ (and hence is general).  

The resolution $$0\to W\to \OO_{\P^2}(-r)\te \Hom(\OO_{\P^2}(-r),I_Z)\to I_Z\to 0$$ is particularly well-behaved when $W$ is \emph{stable}; however, $W$ will be stable if and only if either $\varphi^{-1}<s/r\leq 1$ or $s/r$ is a convergent in the continued fraction expansion of $\varphi^{-1}$, where $\varphi = (1+\sqrt{5})/2$ is the golden ratio.  When $W$ is not stable, this resolution is somewhat unsatisfactory.  In this case the terms of the Gaeta resolution do not optimally reflect cohomological properties of the ideal sheaf $I_Z$; in particular, it may very well happen that $V\te I_Z$ has no cohomology for some vector bundle $V$, but that $V\te W$ and $V(-r)$ have cohomology.  If one wishes to use the Gaeta resolution to prove $V \te I_Z$ has no cohomology, it becomes necessary to analyze the maps in the resolution, and things become unwieldy.  Our generalization of Gaeta's theorem will resolve the ideal sheaf into a pair of semistable bundles which are much more suitable for such computations.

\begin{theorem}\label{GaetaResThm}
Let $\mu$ be the minimum slope of a stable bundle on $\P^2$ satisfying $\chi(E)/\rk(E) = n$.  Let $\alpha\in \F E$ be the exceptional slope associated to $\mu$.  If $\mu < \alpha$, the general ideal sheaf $I_Z$ of $n$ points admits a canonical resolution $$0\to W \to E_{-\alpha}\te \Hom(E_{-\alpha},I_Z)\to I_Z\to 0$$ where $W$ is a stable bundle and $E_{-\alpha}$ is the exceptional bundle of slope $-\alpha$.  Similarly, if $\mu>\alpha$, the general ideal sheaf admits a canonical resolution $$0\to E_{-\alpha-3}\te \Ext^1(I_Z,E_{-\alpha-3})^*\to W\to I_Z\to 0,$$ where $W$ is a stable bundle.  In each case, a resolution of $W$ by semi-exceptional bundles can be explicitly described.  (For some sporadic values of $n$, $W$ is actually an object of a derived category; see Theorem \ref{resTheorem} for a precise statement.)
\end{theorem}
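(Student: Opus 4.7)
The plan is to mimic the classical Gaeta construction, replacing the line bundles $\OO_{\P^2}(-r), \OO_{\P^2}(-r-1), \OO_{\P^2}(-r-2)$ by an adapted triple of (semi-)exceptional bundles attached to the exceptional slope $\alpha$ produced by Theorem \ref{minExistTheorem}. Throughout I treat only the case $\mu<\alpha$; the case $\mu>\alpha$ is dual under $\Hom(-,\OO_{\P^2}(-3))$ and Serre duality will translate one statement into the other.

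\textbf{Step 1: Cohomological vanishing.} First I would show that for a general $Z\in\P^{2[n]}$,
\[ \Ext^i(E_{-\alpha},I_Z)=0 \quad \text{for } i\geq 1. \]
Vanishing of $\Ext^2$ is Serre duality: $\Ext^2(E_{-\alpha},I_Z)^*\cong\Hom(I_Z,E_{-\alpha}(-3))$, which vanishes because $\mu(E_{-\alpha}(-3))=-\alpha-3$ is smaller than any slope of a subsheaf of $I_Z$ once one checks the numerical inequalities coming from the Drezet--Le~Potier classification at the exceptional slope $\alpha$. For $\Ext^1$ I would invoke minimality of $\mu$: if $\Ext^1(E_{-\alpha},I_Z)\ne 0$ on a dense open subset of $\P^{2[n]}$, one could use the deformation to produce a stable bundle of slope strictly less than $\mu$ still satisfying $\chi/r\geq n$, contradicting Theorem \ref{minExistTheorem}. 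In particular, $\dim\Hom(E_{-\alpha},I_Z)=\chi(E_{-\alpha},I_Z)$ has the ``expected'' value.

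\textbf{Step 2: Construction and surjectivity of the evaluation map.} Consider the natural evaluation
\[ \ev: E_{-\alpha}\otimes\Hom(E_{-\alpha},I_Z)\longrightarrow I_Z. \]
To prove $\ev$ is surjective I would argue that $E_{-\alpha}$ belongs to a Beilinson-type exceptional triple on $\P^2$ tailored to the slope range near $\alpha$, and that the resulting spectral sequence computing $I_Z$ degenerates because of the vanishing in Step 1. This forces the image of $\ev$ to be all of $I_Z$; equivalently, $I_Z$ is generated by its $E_{-\alpha}$-sections for general $Z$. Define $W:=\ker(\ev)$. A Chern character computation in the Grothendieck group, using the formula for $\ch(E_{-\alpha})$, identifies $\ch(W)$ with a specific class in $K(\P^2)$ of positive rank; call it $v$.

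\textbf{Step 3: Stability of $W$ (the main obstacle).} This is the technical heart of the theorem. I would show $v$ lies in a moduli space $M(v)$ which is nonempty (via the Drezet--Le~Potier existence criterion, using that $\alpha$ is precisely the associated exceptional slope to $\mu$), and then argue that the particular $W$ built above is stable for general $Z$. The cleanest way is via Bridgeland stability: the resolution is precisely the short exact sequence produced when $I_Z$ crosses the Bridgeland wall defined by $E_{-\alpha}$, so the destabilizing subobject $W[?]$ of $I_Z$ on the other side of the wall is automatically (semi-)stable by wall-crossing formalism, provided no deeper wall intervenes. The no-deeper-wall statement is again controlled by the minimality of $\mu$: a deeper destabilization would give an exceptional destabilizer of smaller slope, contradicting that $\alpha$ is the exceptional slope associated to $\mu$. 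The ``sporadic'' cases in the theorem are exactly the numerical accidents where the rank of $W$ drops or the map $\ev$ fails surjectivity, and must be handled by interpreting $W$ as the two-term complex $[E_{-\alpha}^{\oplus a}\to E_{-\alpha}^{\oplus b}]$ in $D^b(\P^2)$ appearing on the relevant side of the wall.

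\textbf{Step 4: Iteration to a semi-exceptional resolution of $W$.} Having placed $W$ in a known stable moduli space, I would apply the same construction to $W$ using the next exceptional bundle in the triple containing $E_{-\alpha}$ (the ``Serre-dual partner'' in the triple). The vanishing and surjectivity steps are analogous, but the process terminates after one more step because at that point the resulting kernel is itself a direct sum of exceptional bundles — this is guaranteed by the three-bundle structure of Drezet--Le~Potier helices on $\P^2$. Assembling the two resolutions gives the promised description of $W$ by semi-exceptional bundles.

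The principal difficulty is Step 3: proving stability of $W$ for a general point of $\P^{2[n]}$. Without the wall-crossing viewpoint, one is forced into delicate rank/slope comparisons among potential destabilizing subsheaves, all controlled by the detailed arithmetic of exceptional slopes summarized in Section \ref{gammaSection}. With Bridgeland stability in hand, it becomes a conceptually clean statement about walls, but still requires a precise match between the Mori-theoretic picture (minimality of $\mu$) and the Bridgeland-theoretic picture (no subwall strictly closer than the wall defined by $E_{-\alpha}$).
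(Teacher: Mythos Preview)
Your overall shape---evaluate, take a kernel, prove it stable---is natural, but it is the \emph{reverse} of how the paper actually argues, and this reversal creates real gaps in Steps~1 and~3.

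The paper does not start from $I_Z$ and build $W$ as a kernel. It goes backwards: writing the associated exceptional slope as $\alpha.\beta$, it first \emph{defines} $W$ abstractly as the kernel of a general map $E_{-\alpha-3}^{m_1}\to E_{-\beta}^{3r_{\alpha.\beta}m_1-m_2}$ of semi-exceptional bundles (with explicitly computed exponents), then takes a general map $W\to E_{-(\alpha.\beta)}^{m_3}$ and proves by a direct Chern-character computation that the cokernel has invariants $(1,0,-n)$ and is torsion-free, hence an ideal sheaf of some $Z$. Stability of $W$ is then a citation of Brambilla's theorem on cokernel/kernel bundles (prioritarity plus the numerical window on $m_1/m_2$), not Bridgeland wall-crossing. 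Only \emph{after} this resolution is in hand does the paper deduce the vanishing $\Ext^1(E_{-(\alpha.\beta)},I_Z)=0$ (Theorem~\ref{exceptionalInterpThm}) and identify the map with the canonical evaluation; an openness argument then extends everything to the general $Z$. The semi-exceptional resolution of $W$ is thus the \emph{starting point}, not something obtained by iterating your construction as in Step~4.

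Your forward approach would need independent proofs at two places where you currently wave hands. In Step~1, the claim that nonvanishing of $\Ext^1(E_{-\alpha},I_Z)$ would ``produce a stable bundle of slope strictly less than $\mu$'' is not substantiated: an element of this group is an extension $0\to I_Z\to ?\to E_{-\alpha}\to 0$, and there is no evident way to extract from it a stable vector bundle contradicting minimality of $\mu$. The paper instead deduces this vanishing \emph{from} the resolution, using acyclicity of $E_{\alpha.\beta}\otimes W$ and rigidity of $E_{\alpha.\beta}$. In Step~3, invoking Bridgeland wall-crossing is circular within the paper's logic: the Bridgeland analysis in Sections~\ref{bridgelandSection}--\ref{bridgelandSec2} \emph{uses} the resolution of Theorem~\ref{resTheorem} as input, and the ``no deeper wall'' claim you need is precisely Theorem~\ref{excBridgeThm}, whose proof is an independent induction on exceptional slopes rather than a consequence of minimality of $\mu$. (Also, in your sporadic-case description the two-term complex should involve two \emph{different} exceptional bundles, $E_{-\alpha-3}$ and $E_{-\beta}$, not two copies of $E_{-\alpha}$.)
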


We also refer the reader to Theorem \ref{resTheorem} for a statement in case $\mu = \alpha$.  We note that the Gaeta resolution is recovered as the special case where the exceptional slope $\alpha$ is an integer.

\subsection{The effective cone of $\P^{2[n]}$}  By combining Theorem \ref{GaetaResThm} with the main result from \cite{HuizengaPaper} we can construct the extremal edge of the effective cone of $\P^{2[n]}$.
\begin{theorem}\label{effConeThm1}
Let $\mu$ be the minimum slope of a stable bundle on $\P^2$ with $\chi/r=n$.  A general such bundle $V$ with sufficiently large and divisible rank satisfies interpolation for $n$ points.  Thus $\mu H -\frac{1}{2}\Delta$ is the class of an effective divisor on $\P^{2[n]}$.  Furthermore, the effective cone of $\P^{2[n]}$ is spanned by $$\mu H - \frac{1}{2}\Delta \qquad \textrm{and} \qquad \Delta.$$
\end{theorem}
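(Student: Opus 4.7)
The plan has three stages: prove $V$ satisfies interpolation, derive effectiveness of $\mu H - \Delta/2$ from interpolation, and combine with a known lower bound to pin down the cone.

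For interpolation, the condition $h^0(V \otimes I_Z) = h^0(V) - rn$ for general $Z$ is, via the G\"ottsche-Hirschowitz result \cite{GottscheHirschowitz} giving $h^0(V) = \chi(V) = rn$, equivalent to the full cohomology vanishing $H^\bullet(V \otimes I_Z) = 0$. To establish this, I would apply Theorem \ref{GaetaResThm} to resolve $I_Z$ canonically by a stable bundle $W$ together with a direct sum of copies of an exceptional bundle $E_{-\alpha}$ (or $E_{-\alpha-3}$ in the other case). Tensoring the resolution with $V$ reduces the cohomology vanishing to computing $\Hom$ and $\Ext$ groups between $V$ and each of the resolving bundles. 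Since $V$ is stable of slope $\mu$ adjacent to the exceptional slope $\alpha$, and the slopes of $W$ and the relevant twists of $E_{-\alpha}$ are also close to $\alpha$, these vanishings should follow from slope-stability together with Serre duality, using the explicit description of $W$ by semi-exceptional bundles given in Theorem \ref{resTheorem}. The hypothesis that $\rk(V)$ is large and divisible both guarantees the existence of stable bundles exactly attaining $\mu(V) = \mu$ and $\chi(V)/\rk(V) = n$, and provides enough flexibility for the generic bundle to satisfy the required $\Ext$-vanishings. This cohomology vanishing, which requires a separate case analysis according to whether $\mu < \alpha$ or $\mu > \alpha$, is the main technical obstacle.

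Granted interpolation, effectiveness of $\mu H - \Delta/2$ is immediate: a generic subspace of $H^0(V)$ of dimension $rn$ produces the tautological divisor $D_V(n)$ on $\P^{2[n]}$, whose class is computed by the Grothendieck-Riemann-Roch argument recalled in Section 1.1 as $c_1(V) H - \rk(V)\Delta/2$, which spans the ray $\mu H - \Delta/2$.

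Finally, extremality of this ray follows from the main result of \cite{HuizengaPaper}, which exhibits families of curves sweeping $\P^{2[n]}$ whose intersection numbers with effective divisors force the non-boundary edge to have slope at least $\mu$. Combined with the effectiveness established above and the fact that $\Delta$ always spans the other edge, this completes the determination of $\Eff \P^{2[n]}$.
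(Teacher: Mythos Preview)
Your outline is right in shape but has a genuine gap at the heart of the interpolation argument. After tensoring the resolution of $I_Z$ with $V$, the vanishing $H^\bullet(V\otimes E_{-(\alpha.\beta)})=0$ is indeed easy: it follows from the resolution of $V$ by semi-exceptional bundles and the acyclicity of the relevant twisted products of exceptional bundles. The hard step is $H^\bullet(V\otimes W)=0$, and this does \emph{not} follow from slope-stability and Serre duality. Both $V$ and $W^*(-3)$ are stable bundles resolved by the same pair $(E_{\beta-3},E_\alpha)$; one computes $\chi(V,W^*(-3))=0$ and $\mu(W^*(-3))\geq \mu(V)$, so stability kills $\Ext^2$ but says nothing about $\Hom$ (the slope inequality goes the wrong way). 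What the paper actually uses here is the orthogonality theorem for Kronecker modules due to Schofield and van den Bergh (Corollary~\ref{kroneckerOrthogonal} and Corollary~\ref{quotientOrthogonal}): the resolutions identify $V$ and $W^*(-3)$ with representations $f,e$ of the Kronecker quiver, with dimension vectors arranged so that $\chi(f,e)=0$, and the quiver result gives $\Hom_Q(f,e)=0$ for general $f$ when $k$ is large. This is exactly why the hypothesis ``sufficiently large and divisible rank'' appears, and it is the essential new input beyond stability.

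Your extremality step is also not what the paper does. The reference \cite{HuizengaPaper} does not supply moving curves dual to $\mu H-\tfrac12\Delta$ for arbitrary $n$. Instead, the paper constructs a dominant rational map $\pi:\P^{2[n]}\dashrightarrow Kr(\udim e)$ to a moduli space of Kronecker modules (sending $Z$ to the module underlying $W$), checks via a dimension count that $\pi$ has positive-dimensional fibers when $\mu$ is non-exceptional, and observes that $D_V(n)$ is the pullback of a divisor $D_f$ on $Kr(\udim e)$. Extremality follows because curves in the fibers of $\pi$ are moving curves with $D_V(n)\cdot C=0$. In the exceptional case $\mu=\alpha.\beta$ the map is birational and one instead varies the map in the resolution (via Proposition~\ref{bertiniProp}) to produce moving curves directly.
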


Given the resolution of the ideal sheaf $I_Z$, showing that such bundles $V$ satisfy interpolation amounts to a previously studied problem about orthogonality of representations of the Kronecker quiver with two vertices and $N$ arrows.  We use results of Schofield and van den Bergh to prove interpolation holds.

The computational value of the generalized Gaeta resolution is demonstrated by the fact that if say $\mu<\alpha$ (as in Theorem \ref{GaetaResThm}) and $V$ is a bundle as in Theorem \ref{effConeThm1}, then both $V\te W$ and $V\te E_{-\alpha}$ turn out to have no cohomology.  Thus there is no need to understand the map $W\to E_{-\alpha}\te \Hom(E_{-\alpha},I_Z)$ to show $V\te I_Z$ has no cohomology.  Such complication was unavoidable with the original Gaeta resolution.

To show that the divisor $\mu H - \frac{1}{2}\Delta$ is actually extremal, we will study the rational map $\P^{2[n]}\dashrightarrow M(\ch(W))$ sending the general scheme $Z$ to the bundle $W$ in the resolution of $I_Z$ (or, more precisely, we study the map to a related moduli space of quiver representations).  This map typically has positive-dimensional fibers, and our extremal divisors on the Hilbert scheme are pullbacks under this map.

Since it is a bit tedious to determine the exact value of $\mu$ in the theorem by hand, we give a table describing the effective cone and associated exceptional slopes for small $n$ at the end of Section \ref{effConeSection}.

\subsection{Bridgeland stability}  The generalized Gaeta resolution has further relevance when one discusses the Bridgeland stability of ideal sheaves $I_Z$.  In Sections \ref{bridgelandSection} and \ref{bridgelandSec2} we will show that our computation of the effective cone $\Eff \P^{2[n]}$ is consistent with a conjecture in \cite{ABCH} predicting a correspondence between the Mori walls for $\P^{2[n]}$ and the Bridgeland walls in a suitable half-plane of stability conditions.  We will see that our resolution shows that general ideal sheaves are always destabilized by certain exceptional bundles.  The main step in the proof consists of determining when exceptional bundles are Bridgeland stable.  We give a fairly complete answer to this question in Section \ref{bridgelandSec2}.

\subsection{Further work}  It appears that many of the results in this paper can be generalized to study cones of divisors on moduli spaces of semistable sheaves on the plane.  The Picard group of a moduli space $M(\xi)$ of semistable sheaves is naturally identified with a plane of orthogonal Chern characters \cite{LePotierLectures}.  In terms of this description, the effective cone should correspond to Chern characters of stable orthogonal bundles.  We will study this problem in upcoming work with Izzet Coskun and Matthew Woolf.

\subsection{Acknowledgements}  I would like to thank Joe Harris, Izzet Coskun, and Daniele Arcara for their many helpful discussions regarding this work.  Also, I am indebted to the anonymous referee of \cite{HuizengaPaper}, whose suggestions pointed me towards the methods used in this paper.  The referees of this article also provided very valuable advice, helping to greatly simplify the material from Sections \ref{SteinerSection} and \ref{effConeSection}. I would also like to thank So Okada and RIMS Kyoto for organizing a very useful conference on related topics.  Finally, I would like to thank the community of MathOverflow for their help with finding references for basic facts regarding continued fractions.

\section{Preliminaries}

In this section we set notation for the paper and review parts of the classification of stable vector bundles on $\P^2$ that will be necessary throughout the paper.  We predominantly choose notations to agree with the papers of Drezet and Le Potier \cite{DrezetBeilinson,Drezet,DLP,LePotierLectures}, and summarize results from those sources.

\subsection{Invariants of coherent sheaves} 

We collect here several formulas which will be used constantly throughout the paper.  Let $E$ be a coherent sheaf on $\P^2$, with Chern character $(\ch_0,\ch_1,\ch_2)=(r,c_1,\ch_2)$. When $r>0$, the \emph{slope} and \emph{discriminant} are defined by 
$$\mu(E) = \frac{c_1}{r} \qquad \textrm{and} \qquad \Delta(E) = \frac{1}{2}\mu^2-\frac{\ch_2}{r},$$ respectively.  The Riemann-Roch formula relates the Chern character to the Euler characteristic by $$\chi(E) = r(P(\mu)-\Delta),$$ where $$P(x)=\frac{1}{2}(x^2+3x+2)$$ is the Hilbert polynomial of the trivial sheaf $\OO_{\P^2}$.  

If $F$ is another coherent sheaf, we put $$\chi(E,F) = \sum_{i=0}^2 (-1)^i \dim \Ext^i(E,F).$$ In case both $E$ and $F$ have positive rank, a variant of Riemann-Roch shows $$\chi(E,F) = r(E)r(F)(P(\mu(F)-\mu(E))-\Delta(E)-\Delta(F)).$$ Finally, Serre duality for $\Ext$-groups gives $$\Ext^i(E,F) \cong \Ext^{2-i} (F,E(-3))^{*}$$ for each $i$ \cite[Proposition 1.2]{DLP}.

We say that a sheaf $E$ is \emph{acyclic} if $H^i(E)=0$ for all $i>0$.  In practice, we will only consider the notion of acyclicity for sheaves $E$ with $\chi(E) = 0$, in which case $H^0(E) = 0$ as well.

\subsection{Exceptional bundles}  The sources \cite{DLP,LePotierLectures} are good references for the material in this subsection.  A coherent sheaf $E$ is said to be \emph{stable} (resp.  \emph{semi-stable}) if it is torsion free and every coherent subsheaf $F\subset E$ with $0<r(F)<r(E)$ has $\mu(F)\leq \mu(E)$ with $\Delta(E)<\Delta(F)$ (resp. $\leq$) in case of equality.  For fixed values of the Chern character $\ch = (\ch_0,\ch_1,\ch_2)$, we denote by $M(\ch)$ the moduli space of semistable sheaves with $\ch(E) = \ch$.  

The invariant $\Delta$ is useful due to its connection with stable bundles.  Bogomolov's theorem shows that $\Delta(E)\geq 0$ for a stable bundle $E$.  Furthermore, when the moduli space $M(\ch)$ is nonempty, it is irreducible of dimension $r^2(2\Delta-1)+1$.  In particular, if $M(\ch)$ consists of a single point, then $\Delta<1/2$.

An \emph{exceptional bundle} $E$ is a stable coherent sheaf such that $M(\ch(E))$ is reduced to a point (it follows from this that $E$ is homogeneous, hence locally free).  Equivalently, it is a stable bundle with $\Delta(E)<1/2$, or a rigid stable bundle (i.e. a stable bundle with $\Ext^1(E,E)=0$).  A \emph{semi-exceptional bundle} is a bundle of the form $E^k$, with $E$ exceptional.

For any rational number $\alpha\in \Q$, denote by $r_\alpha$ the denominator of $\alpha$, i.e. the smallest positive integer $r>0$ with $r\alpha\in \Z$.  If there exists an exceptional bundle $E_\alpha$ of slope $\alpha$, then it is unique and its invariants are given by 
$$\rk(E_\alpha) = r_\alpha \qquad c_1(E_\alpha) = \alpha r_\alpha \qquad \Delta_\alpha := \Delta(E_\alpha) = \frac{1}{2}\left(1-\frac{1}{r_\alpha^2}\right) \qquad \chi_\alpha := \chi(E_\alpha) = r_\alpha(P(\alpha)-\Delta_\alpha).$$ Note that $E_\alpha^* = E_{-\alpha}$, and also $E_\alpha(1) = E_{\alpha+1}$.

Exceptional bundles play an important role in the problem of determining when the moduli spaces $M(\ch)$ are nonempty.  In particular, it is necessary to understand the set $\F E$ of slopes of exceptional bundles.

Clearly $\F E$ is invariant under translation $\alpha \mapsto \alpha+1$ and inversion $\alpha\mapsto -\alpha$.  If $\alpha,\beta\in \F E$ and $3+\alpha-\beta\neq 0$, we define a rational number $$\alpha.\beta=\frac{\alpha+\beta}{2} + \frac{\Delta_\beta-\Delta_\alpha}{3+\alpha-\beta},$$ which should be thought of as a modification of the mean of $\alpha$ and $\beta$ (note that there is a typo in \cite{Drezet}, and that $\Delta_\beta$ and $\Delta_\alpha$ are reversed there). Let $\F D=\Z[\frac{1}{2}]$ be the set of dyadic rational numbers.  There is a bijection $\varepsilon: \F D \to \F E$ described inductively by setting $\varepsilon(n) = n$ for $n\in \Z$ and 

\renewcommand{\arraystretch}{1.3}

$$
\varepsilon\left(\frac{2p+1}{2^q}\right) = \varepsilon\left(\frac{p\vphantom{2}}{2^{q-1}}\right).\varepsilon\left(\frac{p+1}{2^{q-1}}\right).$$ It is useful to keep several values of $\varepsilon$ where $q$ is small in mind, so we record them here.
$$\begin{array}{c|ccccccccc} \frac{p}{2^q}& 0 & \frac{1}{8}& \frac{1}{4} &\frac{3}{8} & \frac{1}{2} & \frac 58 & \frac 34 & \frac 78 & 1  \\ \hline  \varepsilon\left(\frac{p}{2^q}\right) & 0 & \frac 5{13} & \frac 25 & \frac{12}{29} & \frac 12 & \frac{17}{29}  & \frac 35 & \frac {8}{13} & 1
 \end{array}$$ The following arithmetic properties of this setup will be used repeatedly in computations.  

\begin{lemma}\label{numericalProps}
Suppose $$\alpha = \varepsilon\left(\frac{p\vphantom{1}}{2^q}\right) \qquad \beta = \varepsilon\left(\frac{p+1}{2^q}\right).$$ Then 
\begin{enumerate}
\item $\alpha < \alpha.\beta < \beta$,
\item $r_{\alpha.\beta} = r_\alpha r_\beta ( 3 - \alpha+\beta)$, and
\item $P(\alpha-\beta) = \Delta_\alpha+\Delta_\beta$.   
\end{enumerate}
Furthermore, the relations $$\alpha.\beta - \alpha = \frac{1}{r_\alpha^2(3+\alpha-\beta)} \qquad \textrm{and} \qquad \beta - \alpha.\beta = \frac{1}{r_\beta^2(3+\alpha-\beta)}$$ hold.
\end{lemma}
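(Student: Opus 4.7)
The plan is to prove all parts of the lemma simultaneously by strong induction on $q$, exploiting the fact that the set of adjacent pairs $(\varepsilon(p/2^q),\varepsilon((p+1)/2^q))$ is generated from the base pair $(p,p+1)$ by the refinement rule which inserts $\alpha.\beta$ between $\alpha$ and $\beta$. The base case $q=0$ is immediate: for $(\alpha,\beta)=(p,p+1)$ one has $r_\alpha=r_\beta=1$, $\Delta_\alpha=\Delta_\beta=0$, and $\alpha.\beta=p+\tfrac{1}{2}$, so all five claims are checked directly using $r_{1/2}=2$ and $\Delta_{1/2}=3/8$.

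For the inductive step I would first reduce most of the assertions to identity (3). Expanding $P(x)=\tfrac{1}{2}(x^2+3x+2)$ and $\Delta_\gamma=\tfrac{1}{2}(1-r_\gamma^{-2})$, one sees that (3) is algebraically equivalent to
\[(\beta-\alpha)(3+\alpha-\beta)=\tfrac{1}{r_\alpha^2}+\tfrac{1}{r_\beta^2}.\]
Substituting this into the definition $\alpha.\beta=\tfrac{\alpha+\beta}{2}+\tfrac{\Delta_\beta-\Delta_\alpha}{3+\alpha-\beta}$ and simplifying immediately yields the two displayed ``furthermore'' relations; since inductively $\beta-\alpha<3$, both $r_\alpha^2(3+\alpha-\beta)$ and $r_\beta^2(3+\alpha-\beta)$ are positive, whence assertion (1). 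So in the inductive step it suffices to verify (3) and the rank formula (2) for the two new pairs $(\alpha,\gamma)$ and $(\gamma,\beta)$ with $\gamma=\alpha.\beta$. For (3) at $(\alpha,\gamma)$, substituting the known value $\gamma-\alpha=1/(r_\alpha^2(3+\alpha-\beta))$ and the candidate formula for $r_\gamma$ from (2) reduces the claim to a polynomial identity in $r_\alpha,r_\beta,\alpha,\beta$ that unfolds from (3) applied to $(\alpha,\beta)$.

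For the rank identity (2), write $\alpha=p_\alpha/r_\alpha$ and $\beta=p_\beta/r_\beta$ in lowest terms and put $D=3r_\alpha r_\beta+p_\alpha r_\beta-p_\beta r_\alpha\in\Z_{>0}$, so that $r_\alpha r_\beta(3+\alpha-\beta)=D$. The first ``furthermore'' relation gives $\gamma=(Dp_\alpha+r_\beta)/(r_\alpha D)$, and the task is to show this fraction has denominator exactly $D$ in lowest terms. Since $D\equiv p_\alpha r_\beta\pmod{r_\alpha}$, the required divisibility $r_\alpha\mid Dp_\alpha+r_\beta$ becomes $r_\alpha\mid r_\beta(p_\alpha^2+1)$, which is the main delicate point and must be carried through the induction alongside $\gcd(p_\alpha,r_\alpha)=1$ and coprimality of the reduced numerator with $D$. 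The principal obstacle is precisely this number-theoretic bookkeeping; once it is in place and identity (3) is established, everything else is routine algebraic manipulation powered by the definition of $\alpha.\beta$.
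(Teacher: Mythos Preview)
Your reduction of (1) and the ``furthermore'' relations to identity (3) is exactly right, and this matches what the paper does: the paper does not prove (1)--(3) at all, citing them from \cite{Drezet,DLP,LePotierLectures}, and only remarks that the ``furthermore'' part is an elementary consequence of the listed properties. So on this point you have done more than the paper, not less, and your derivation is correct (the key algebra $(\beta-\alpha)(3+\alpha-\beta)=r_\alpha^{-2}+r_\beta^{-2}$ really is equivalent to (3), and plugging it into the definition of $\alpha.\beta$ gives both displayed relations).

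Your inductive verification of (3) for the new pairs also checks out: writing $t=3+\alpha-\beta$ and using $\gamma-\alpha=1/(r_\alpha^2 t)$ together with $r_\gamma=r_\alpha r_\beta t$ from the previous level, the claim $(\gamma-\alpha)(3+\alpha-\gamma)=r_\alpha^{-2}+r_\gamma^{-2}$ reduces after clearing denominators to $t(3-t)=r_\alpha^{-2}+r_\beta^{-2}$, which is precisely (3) for $(\alpha,\beta)$.

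The one place where your outline is genuinely incomplete is the treatment of (2). You correctly identify that $r_\alpha\mid r_\beta(p_\alpha^2+1)$ is needed, but you should be aware that the ``bookkeeping'' you defer has real content: you must also carry $\gcd(r_\alpha,r_\beta)=1$ through the induction (so that the divisibility reduces to $r_\alpha\mid p_\alpha^2+1$), and you must verify that after cancelling $r_\alpha$ the reduced numerator is coprime to $D$, which again uses $\gcd(r_\alpha,r_\beta)=1$ together with $\gcd(p_\beta,r_\beta)=1$. Finally, for the induction to propagate you must show the new slope $\gamma$ itself satisfies $p_\gamma^2\equiv -1\pmod{r_\gamma}$. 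All of this can be done, but it is several careful steps, not mere bookkeeping. It is worth noting that the paper later proves $p_\alpha^2\equiv -1\pmod{r_\alpha}$ via continued fractions (Corollary~\ref{congruentCor}), but that argument \emph{uses} the present lemma, so your proposal to establish the congruence inductively alongside (2) is the right way to avoid circularity. (Incidentally, the stated formula (2) has a sign typo: it should read $r_{\alpha.\beta}=r_\alpha r_\beta(3+\alpha-\beta)$, as used throughout the rest of the paper and implicitly in your own computation of $D$.)
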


The ``furthermore'' part of the lemma is an elementary consequence of the previous properties.  Most properties of exceptional slopes are more efficiently proved by using the identities in the lemma instead of invoking the explicit definition of $\alpha.\beta$.

\subsection{Existence of stable coherent sheaves on $\mathbb{P}^2$} For any $\alpha\in \F E$, define a number $$x_\alpha = \frac{3}{2}-\sqrt{\frac{9}{4}-\frac{1}{r_\alpha^2}},$$ which is the smaller of the two solutions of the equation $P(-x)-\Delta_\alpha = \frac{1}{2}$.   The number $x_\alpha$ is always irrational.  We denote by $I_\alpha\subset \R$ the interval $$I_\alpha = (\alpha-x_\alpha,\alpha+x_\alpha).$$ The intervals $I_\alpha$ are all disjoint, and they cover the rationals:  $$\Q =\Q \cap \bigcup_{\alpha\in \F E} I_\alpha.$$ If $\mu\in \Q$, then the unique slope $\alpha\in \F E$ with $\mu\in I_\alpha$ is called the \emph{associated exceptional slope} to $\mu$.

\begin{theorem}[Drezet \cite{Drezet}]\label{deltaClassification}
Suppose $r\geq 1$ is an integer, and $\mu,\Delta\in \Q$ are numbers such that $r\mu$ and $r(P(\mu)-\Delta)$ are integers.  Define a function $\delta:\Q\to \Q$ by the formula $$\delta(\mu) = P(-|\mu-\alpha|)-\Delta_\alpha \qquad \textrm{if $\mu\in I_\alpha$}$$ The moduli space $M(r,\mu,\Delta)$ of semistable sheaves  with invariants $(r,\mu,\Delta)$ is nonempty if and only if either $$\delta(\mu)\leq \Delta$$ or $(r,\mu,\Delta)$ are the invariants of some semi-exceptional bundle.
\end{theorem}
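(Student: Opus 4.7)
The plan is to prove the two directions separately, with exceptional bundles serving as the key technical tool in both cases.

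For the necessity direction, suppose $E$ is a stable sheaf with invariants $(r,\mu,\Delta)$ that is not semi-exceptional, and let $\alpha\in\F E$ be the associated exceptional slope, so $\mu\in I_\alpha$. I would compute the Euler pairings with $E_\alpha$ via the Riemann--Roch formula
$$\chi(E_\alpha,E)=r_\alpha r\bigl(P(\mu-\alpha)-\Delta_\alpha-\Delta\bigr),\qquad \chi(E,E_\alpha)=r_\alpha r\bigl(P(\alpha-\mu)-\Delta_\alpha-\Delta\bigr).$$
The key observation is that $|\mu-\alpha|<x_\alpha<3/2$, so slope comparison together with stability of $E$ and of $E_\alpha(\pm 3)$ forces certain extreme Ext groups to vanish: when $\mu\geq\alpha$, Serre duality gives $\Ext^{2}(E_\alpha,E)\cong\Hom(E,E_\alpha(-3))^{\ast}=0$, and symmetrically when $\mu\leq\alpha$. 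One then shows that the middle term $\Hom(E_\alpha,E)$ (or the corresponding Hom with arguments reversed) must vanish as well unless $E$ is forced to be semi-exceptional: a nonzero morphism would produce a subsheaf or quotient of slope very close to $\alpha$, and the stability inequality combined with the definition of $x_\alpha$ as the root of $P(-x)-\Delta_\alpha=1/2$ would corner $E$ into being exceptional. Consequently $\chi(E_\alpha,E)\leq 0$, which after rearrangement is exactly $\Delta\geq P(-|\mu-\alpha|)-\Delta_\alpha=\delta(\mu)$.

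For the sufficiency direction, the plan is constructive: given $(r,\mu,\Delta)$ satisfying $\delta(\mu)\leq\Delta$, produce a stable sheaf with these invariants as a cohomology sheaf of a monad built from exceptional bundles. One chooses a strong exceptional triple $(E_{\alpha_1},E_{\alpha_2},E_{\alpha_3})$ whose slopes bracket $\mu$ (for instance, the three exceptional bundles determined by $\alpha$ and its neighbours under the $\varepsilon$-labelling of $\F D$), and writes $E$ as the middle cohomology of a complex
$$E_{\alpha_1}^{a}\longrightarrow E_{\alpha_2}^{b}\longrightarrow E_{\alpha_3}^{c}$$
whose ranks $a,b,c$ are determined by matching Chern characters. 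The inequality $\delta(\mu)\leq\Delta$ translates into the nonnegativity of these ranks, and for a general choice of morphisms the resulting sheaf is stable. Stability can be checked either by a dimension count on the corresponding moduli of Kronecker-type quiver representations, or by induction on $\Delta$ reducing to the base case where $E$ is itself close to a semi-exceptional bundle.

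The main obstacle is the case analysis near the boundary of $I_\alpha$: when $\mu$ is close to $\alpha\pm x_\alpha$, several neighbouring exceptional bundles compete to destabilize $E$, and both the vanishing arguments and the monad construction must be carefully calibrated against \emph{all} nearby $E_\beta$ simultaneously. This is where the arithmetic identities recorded in Lemma \ref{numericalProps}, in particular the expressions for $\alpha.\beta-\alpha$ and $\beta-\alpha.\beta$, become essential: they show that the intervals $I_\beta$ for the exceptional slopes adjacent to $\alpha$ tile the complement of $I_\alpha$ precisely, so that the argument against one $E_\alpha$ suffices on $I_\alpha$ and propagates consistently across the whole rational line. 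Once this arithmetic backbone is in place, both halves of the proof proceed uniformly.
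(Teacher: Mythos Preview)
The paper does not prove this theorem: it is quoted from Drezet \cite{Drezet} as background, with no argument supplied. So there is no ``paper's own proof'' to compare against; your proposal is really a sketch of Drezet's original argument.

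On its own merits, your outline is in the right spirit but the necessity direction is muddled. You write that ``the middle term $\Hom(E_\alpha,E)$ \ldots must vanish'' and then invoke $x_\alpha$ and the root of $P(-x)-\Delta_\alpha=\tfrac12$ to force this. Two problems. First, $\Hom=\Ext^0$ is not the middle term; $\Ext^1$ is. What you actually need is that \emph{both} $\Ext^0$ and $\Ext^2$ vanish, so that $\chi=-\dim\Ext^1\leq 0$. Second, the vanishing of the relevant $\Hom$ is immediate from Mumford stability and needs no appeal to $x_\alpha$: when $\mu<\alpha$ one has $\Hom(E_\alpha,E)=0$ directly, and when $\mu>\alpha$ you should instead look at $\chi(E,E_\alpha)$, where $\Hom(E,E_\alpha)=0$ is again automatic. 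The case split is governed by which of $\chi(E_\alpha,E)$, $\chi(E,E_\alpha)$ computes $P(-|\mu-\alpha|)-\Delta_\alpha-\Delta$, and the two $\Hom$ vanishings are the trivial ones; $x_\alpha$ plays no role here.

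For sufficiency, your monad/Kronecker-quiver idea is indeed the core of Drezet's construction, but ``for a general choice of morphisms the resulting sheaf is stable'' hides all the content. Drezet's proof requires a careful analysis of the GIT stability of the associated Kronecker module and an identification of the moduli space with a space of quiver representations; this is not a dimension count one can wave through. Your ``main obstacle'' paragraph about boundary behaviour near $\alpha\pm x_\alpha$ is largely a red herring: once the correct triad is chosen, the necessity argument works uniformly on $I_\alpha$ without competing against neighbouring exceptional bundles.
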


Write $C = \R\setminus \bigcup_{\alpha\in \F E} I_\alpha$.  We can view $C$ as a generalized Cantor set, obtained by iteratively removing from $\R$ at step $q$ all intervals $I_\alpha$ where $\alpha$ is of the form $\varepsilon(p/2^q)$.  It is easy to see from what has been said so far that $C$ is the closure of all the endpoints of the intervals $I_\alpha$ (just as is true for the ordinary Cantor set).  

\begin{remark}
As with the standard Cantor set, $C$ is uncountable and most of its points are not endpoints of the intervals $I_\alpha$.  This fact is a source of much technical difficulty.
\end{remark}

While the next result is well-known, the argument is fundamental to our discussion, so we include it.

\begin{proposition}\label{deltaContinuous}
The function $\delta:\Q\to\Q$ admits a unique continuous extension to a function $\R\to \R$, and $\delta^{-1}(1/2) = C$.
\end{proposition}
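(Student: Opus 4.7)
My plan is to define the extension $\tilde\delta\colon\R\to\R$ piecewise, by the formula $\tilde\delta(\mu) = P(-|\mu-\alpha|)-\Delta_\alpha$ for $\mu\in\overline{I_\alpha}$ and $\tilde\delta(\mu) = 1/2$ for $\mu\in C$. By the very definition of $x_\alpha$ one has $P(-x_\alpha)-\Delta_\alpha = 1/2$, so the two prescriptions agree on the endpoints $\alpha\pm x_\alpha$ of $I_\alpha$ and $\tilde\delta$ is well defined. Because $\Q\subset \bigcup_\alpha I_\alpha$ (and in particular $C$ is disjoint from $\Q$), the function $\tilde\delta$ agrees with $\delta$ on $\Q$, and uniqueness of any continuous extension is automatic from density.

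For continuity, on each closed interval $\overline{I_\alpha}$ the function $\tilde\delta$ is polynomial in $\mu$, so the only issue is continuity at points $c\in C$ that are not endpoints of any $I_\alpha$. Given a sequence $\mu_n\to c$, by passing to subsequences I separately handle the case $\mu_n\in C$ (on which $\tilde\delta\equiv 1/2$) and the case $\mu_n\in I_{\alpha_n}$ with, say, $\mu_n<c$ (the case $\mu_n>c$ being symmetric). Here the key geometric observation is that $c\notin I_{\alpha_n}$ together with $\mu_n>\alpha_n-x_{\alpha_n}$ forces $c\geq \alpha_n+x_{\alpha_n}$; since also $\mu_n < \alpha_n+x_{\alpha_n}$, the endpoint $\alpha_n+x_{\alpha_n}$ is squeezed between $\mu_n$ and $c$, hence tends to $c$. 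Consequently $|\mu_n-\alpha_n| - x_{\alpha_n} = \mu_n - (\alpha_n+x_{\alpha_n})\to 0$, and by continuity of $P$,
\[ \tilde\delta(\mu_n) - \tfrac12 = P(-|\mu_n-\alpha_n|) - P(-x_{\alpha_n})\longrightarrow 0. \]

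For the second assertion, I would expand
\[ P(-t) - P(-x_\alpha) = \tfrac12(x_\alpha - t)(3 - x_\alpha - t). \]
For $\mu$ in the open interval $I_\alpha$, setting $t = |\mu-\alpha|$ we have $t < x_\alpha \le 3/2$, so both factors on the right are strictly positive and $\tilde\delta(\mu) > 1/2$. Combined with $\tilde\delta\equiv 1/2$ on $C$, this gives $\tilde\delta^{-1}(1/2) = C$. The main obstacle is genuinely the continuity argument at a non-endpoint point $c\in C$: a priori one might fear that as $\mu\to c$ through various $I_\alpha$, the distance $|\mu-\alpha|$ could fail to approach $x_\alpha$, causing $\tilde\delta(\mu)$ to oscillate. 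The trapping of the relevant endpoint $\alpha\pm x_\alpha$ between $\mu$ and $c$ above is exactly what rules this out, and it uses only the disjointness of the intervals $I_\alpha$ together with the fact that $c$ lies in none of them, so no finer information about the Cantor-like structure of $C$ is needed.
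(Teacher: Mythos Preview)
Your proof is correct, with one small imprecision: the displayed equality $|\mu_n-\alpha_n| - x_{\alpha_n} = \mu_n - (\alpha_n+x_{\alpha_n})$ holds only when $\mu_n\ge\alpha_n$. In general you only know $(\mu_n-\alpha_n) - x_{\alpha_n}\to 0$; but since $\mu_n-\alpha_n \le |\mu_n-\alpha_n| < x_{\alpha_n}$, the quantity $|\mu_n-\alpha_n| - x_{\alpha_n}$ is squeezed between $(\mu_n-\alpha_n)-x_{\alpha_n}$ and $0$, so your conclusion still follows. Also, your claim that ``the only issue is at non-endpoints'' is slightly hasty: at an endpoint $c=\alpha\pm x_\alpha$ the polynomial description only gives one-sided continuity, and you still need your trapping argument from the other side. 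Fortunately that argument uses nothing more than $c\in C$ (so $c\notin I_{\alpha_n}$), and hence works verbatim at endpoints.

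Your route is genuinely different from the paper's. The paper bounds $\delta$ on each $I_\alpha$ by its maximum $\delta(\alpha)=\tfrac12+\tfrac{1}{2r_\alpha^2}$ and then argues that, near an irrational $\xi\in C$, the denominators $r_\alpha$ of nearby exceptional slopes must be large, forcing $\delta$ close to $\tfrac12$. By contrast, you never touch $r_\alpha$: you simply trap the relevant endpoint $\alpha_n\pm x_{\alpha_n}$ between $\mu_n$ and $c$, and then invoke uniform continuity of $P$ on a bounded interval. Your argument is more elementary and uses only the disjointness of the $I_\alpha$; the paper's argument, on the other hand, gives an explicit modulus of continuity in terms of the arithmetic of exceptional slopes, which is in the spirit of the quantitative estimates used later in the paper.
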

\begin{proof}
We can define $\delta$ on each interval $I_\alpha$ by the formula $$\delta(\mu) = P(-|\mu-\alpha|)-\Delta_\alpha,$$ so it is clear that this extension of $\delta$ is continuous everywhere except the points in $C$, where it has not yet been defined.  Noting that $$\lim_{\mu \to (\alpha+x_\alpha)^-} \delta(\mu) = \lim_{\mu\to (\alpha-x_\alpha)^+} \delta(\mu) = \frac{1}{2}$$ by the definition of $x_\alpha$, we see that any continuous extension of $\delta$ to $\R$ must satisfy $\delta(\xi) = 1/2$ for all $\xi \in C$.  Thus we define $\delta(\xi)=1/2$ for all $\xi\in C$.  We also observe that $\delta(\mu) > 1/2$ for all $\mu\in \R\setminus C$ since $\delta$ is increasing on each interval $(\alpha-x_\alpha,\alpha]$ and decreasing on each interval $[\alpha,\alpha+x_\alpha)$.  We must show continuity holds at $\xi\in C$.  

If $\xi\in C$ is of the form $\alpha+x_\alpha$, then clearly $\delta$ is left-continuous at $\xi$.  Similarly, if $\xi$ is of the form $\beta-x_\beta$, it is right-continuous there.  Without loss of generality, suppose $\xi$ is not of the form $\alpha+x_\alpha$; we show $\delta$ is left-continuous at $\xi$.  Since $\xi$ is in $C$ but not of the form $\alpha+x_\alpha$, it is an increasing limit of exceptional slopes.  If $\alpha\in \F E$ is any exceptional slope with $\alpha<\xi$, then the maximum value of $\delta$ on the interval $I_\alpha$ occurs at $\alpha$, and equals $$\delta(\alpha) = \frac{1}{2}+\frac{1}{2r_\alpha^2}.$$  For any $\epsilon>0$, we can choose an exceptional slope $\alpha<\xi$ sufficiently close to $\xi$ such that all rationals $\mu\in [\alpha,\xi)$ satisfy $(2r_\mu^2)^{-1}<\epsilon$; then for all $x\in (\alpha,\xi)$ we will have $|\delta(\xi)-\delta(x)|< \epsilon$.  
\end{proof}

\subsection{Triads; resolutions of height 0 stable sheaves on $\mathbb{P}^2$}\label{triadSection}
A \emph{triad} is a triple $(E,G,F)$ of exceptional bundles such that the slopes $(\mu(E),\mu(G),\mu(F))$ are of the form $(\alpha, \alpha.\beta,\beta)$, $(\beta-3,\alpha,\alpha.\beta)$, or $(\alpha.\beta,\beta,\alpha+3)$, where $\alpha,\beta$ are exceptional slopes of the form $$\alpha = \varepsilon\left(\frac{p\vphantom{1}}{2^q}\right) \qquad \beta = \varepsilon\left(\frac{p+1}{2^q}\right)$$ for some $p,q$  (possibly with $q=-1$, so that e.g. $(\OO_{\P^2},\OO_{\P^2}(1),\OO_{\P^2}(2))$ is a triad).  Any exceptional slope can be written in the form $\alpha.\beta$, so any exceptional bundle can be viewed as the bundle of slope $\alpha.\beta$ in any of the three types of triads.  The results from the first part of this subsection can be found in \cite{DrezetBeilinson}.

For any triad $(E,G,F)$, the canonical map $$\ev^*:G\to F\te \Hom(G,F)^*$$ is injective, and the cokernel is an exceptional bundle $S$ (for a discussion of \emph{which} exceptional bundle $S$ is, see either \cite{DrezetBeilinson} or Theorem \ref{kernelSlope} in this paper). On the other hand, the map $$\ev:E\otimes \Hom(E,G)\to G$$ is surjective, with kernel $S(-3)$.  For any coherent sheaf $V$ on $\P^2$, there is a canonical complex $$E\te \Ext^1(V,E)^\ast \fto{A_V} G\te \Ext^1(S,V)\fto{B_V} F\te \Ext^1(F,V)$$ coming from a generalized version of the Beilinson spectral sequence.  If $\Hom(F,V) = \Hom(V,E) = 0$, then the map $A_V$ is injective, the map $B_V$ is surjective, and the middle cohomology is just $V$.  

Many numerical invariants of pairs of members of a triad are easily computed, in light of the following vanishing theorem.  
\begin{theorem}[Drezet {\cite[Theorem 6]{DrezetBeilinson}}]\label{excepOrthogonalThm}
If $E,F$ are any exceptional bundles with $\mu(E)\leq \mu(F)$, then $\Ext^i(E,F)=0$ for $i>0$.
\end{theorem}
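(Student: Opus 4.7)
The plan is to handle the vanishings of $\Ext^2$ and $\Ext^1$ separately. The $\Ext^2$-vanishing is essentially automatic from stability: Serre duality gives $\Ext^2(E,F)\cong \Hom(F,E(-3))^*$, and since $F$ and $E(-3)$ are stable with $\mu(E(-3))=\mu(E)-3<\mu(F)$, a nonzero morphism $F\to E(-3)$ would contradict the stability of $F$. Hence $\Ext^2(E,F)=0$.

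For $\Ext^1(E,F)$, the case $\mu(E)=\mu(F)$ follows from the uniqueness of the exceptional bundle of a given slope: $E\cong F$ and $\Ext^1(E,E)=0$ by definition. Assume $\mu(E)<\mu(F)$ and induct on $h(\mu(E))+h(\mu(F))$, where $h(\alpha)$ is the least $q\geq 0$ with $\alpha=\varepsilon(p/2^q)$. The base case has $E=\OO_{\P^2}(a)$ and $F=\OO_{\P^2}(b)$ with $a<b$, and $\Ext^1(\OO_{\P^2}(a),\OO_{\P^2}(b))=H^1(\P^2,\OO_{\P^2}(b-a))=0$.

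For the inductive step there are two essentially different cases. When $E=E_\gamma$ has positive height, write $\gamma=\alpha.\beta$ and use the triad $(E_\alpha,E_\gamma,E_\beta)$ to obtain
$$0\to E_\alpha\to E_\alpha^{\,l}\to E_\gamma\to 0,$$
where the inclusion $E_\alpha\hookrightarrow E_\alpha^{\,l}$ is induced by a nonzero vector $v\in\Hom(E_\alpha,E_\alpha^{\,l})\cong \C^l$ (using that $E_\alpha$ is simple). Applying $\Hom(-,F)$ and invoking the inductive hypothesis to kill $\Ext^1(E_\alpha,F)$, one sees that $\Ext^1(E_\gamma,F)$ is the cokernel of the evaluation-at-$v$ map $\phi\otimes\xi\mapsto (\xi\cdot v)\phi$ from $\Hom(E_\alpha,F)\otimes(\C^l)^*$ to $\Hom(E_\alpha,F)$; this is automatically surjective since $v\neq 0$, so $\Ext^1(E,F)=0$.

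The remaining case --- in which $h(\mu(E))=0$, so $E=\OO_{\P^2}(a)$ is a line bundle, while $F=E_\gamma$ has positive height --- is the main obstacle. Applying $\Hom(E,-)$ to the shifted-triad sequence $0\to F\to E_\beta^{\,m}\to S\to 0$ coming from the triad $(E_\alpha,F,E_\beta)$ and invoking the inductive hypothesis to kill $\Ext^1(E,E_\beta)$ and $\Ext^1(E,S)$ reduces the problem to surjectivity of the composition map $\Hom(E,E_\beta)^m\to \Hom(E,S)$; but this surjectivity is a priori equivalent to the desired $\Ext^1$-vanishing, so the induction cannot be closed naively. The remedy is to subdivide further on the location of $a$ relative to the triad endpoints: when $a\leq\alpha$ one applies $\Hom(E,-)$ instead to the dual sequence $0\to S(-3)\to E_\alpha^{\,l}\to F\to 0$ and closes using $\Ext^2(E,E_\alpha)=0$ from Stage~1 together with the inductive hypothesis; in the sandwiched ranges, where $a$ lies inside the interval spanned by the triad, one replaces the chosen triad with a finer one that puts $a$ outside the relevant interval --- always possible because exceptional slopes are dense in the good interval around $\gamma$. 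A careful double induction accompanied by the numerical identities of Lemma \ref{numericalProps} then closes the argument.
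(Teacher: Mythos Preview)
The paper does not prove this theorem; it is quoted from Drezet \cite{DrezetBeilinson} and used as a black box. So there is no ``paper's own proof'' to compare against, and your proposal must stand on its own.

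It does not. In the inductive step with $E=E_\gamma$ of positive height, you assert an exact sequence
\[
0\to E_\alpha \to E_\alpha^{\,l}\to E_\gamma\to 0,
\]
claiming the kernel of the evaluation map $E_\alpha\otimes\Hom(E_\alpha,E_\gamma)\to E_\gamma$ is a single copy of $E_\alpha$ embedded via some vector $v$. This is false. For the triad $(E_\alpha,E_\gamma,E_\beta)$ the paper records (Section~\ref{triadSection}) that the kernel of this evaluation is the exceptional bundle $S(-3)$, where $\rk S=3r_\alpha r_\beta-r_\gamma$; in general $S(-3)\not\cong E_\alpha$. Already for $\gamma=\tfrac12$, $\alpha=0$, $\beta=1$ the sequence is the Euler sequence $0\to\OO_{\P^2}(-1)\to\OO_{\P^2}^3\to T_{\P^2}(-1)\to 0$, and the kernel is $\OO_{\P^2}(-1)$, not $\OO_{\P^2}=E_\alpha$. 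Your cokernel computation, which hinges on the map $\Hom(E_\alpha,F)^l\to\Hom(E_\alpha,F)$ being ``evaluation at $v$'', therefore rests on a nonexistent sequence. The correct sequence would force you to control $\Ext^1(S(-3),F)$, and $\mu(S(-3))$ can be \emph{smaller} than $\mu(F)$ in a way that makes neither the height bound nor the slope hypothesis immediately available for induction.

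Separately, your treatment of the remaining case (line bundle against a higher exceptional) is not a proof: you acknowledge the naive induction does not close, propose ``subdividing on the location of $a$'' and ``replacing the triad with a finer one'', and then defer to ``a careful double induction''. None of the claimed reductions are actually carried out, and the density remark does not by itself guarantee a triad of strictly smaller total height with the required slope inequalities. As written this part is a sketch of a strategy, not an argument.
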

 One then easily concludes the following facts by computing Euler characteristics:
$$F^*\te E, \quad F^*\te G,\quad \textrm{and} \quad G^*\te E \quad \textrm{are acyclic}$$
$$\dim \Hom(E,G) = 3\rk(F) \qquad \dim \Hom(G,F) = 3\rk(E) \qquad \rk(S) = 3\rk(E)\rk(F)-\rk(G)$$

Let $V$ be a stable sheaf with invariants $(r,\mu,\Delta)$, and let $\alpha\in \F E$ be the exceptional slope associated to $\mu$.  The \emph{height} of $V$ is defined to be the integer $$h(V) = rr_\alpha(\Delta-\delta(\mu)).$$ In case $\mu\leq \alpha$, this is just the number $-\chi( E_\alpha,V)$; similarly, in case $\mu\geq \alpha$ it equals $-\chi(V,E_\alpha)$.  

In the case where the height is zero, the above complex degenerates considerably, as discussed in \cite{Drezet}.  To see this, suppose $V$ has height zero, and first assume $\alpha - x_\alpha < \mu\leq \alpha$.     Choose a triad $(E,G,F)$ with $F = E_\alpha$.  We have inequalities of slopes $$\mu(E)<\mu(G)<\mu(V)\leq \mu(F).$$ The height zero hypothesis gives $\chi(F,V) = 0$.  Stability and the fact that $V$ is non-exceptional gives $\Hom(F,V)=0$ and $\Ext^2(F,V) = 0$ (by Serre duality).  Thus also $\Ext^1(F,V)=0$.  Stability also gives $\Hom(V,E) = 0$, so we conclude that the complex gives an exact sequence $$0\to E\te \Ext^1(V,E)^*\to G\te \Ext^1(S,V)\to V\to 0.$$ If we write this resolution in the form $$0\to E^{m_1}\to G^{m_2}\to V\to 0,$$ then the hypothesis that $\alpha-x_\alpha < \mu \leq \alpha$ is equivalent to the inequalities \begin{equation}\label{slopeIneq} r_\alpha x_\alpha < \frac{m_1}{m_2} \leq \frac{r_\alpha}{\rk S}.\end{equation} In case $\alpha\leq \mu < \alpha-x_\alpha$, we choose a triad $(E,G,F)$ with $E = E_\alpha$, and an identical argument gives an exact sequence $$0\to V\to G\te \Ext^1(S,V)\to F\te \Ext^1(F,V)\to 0.$$ This time, writing the resolution in the form $$0\to V \to G^{m_2}\to F^{m_1}\to 0$$ the same inequalities (\ref{slopeIneq}) also hold, where it is understood that $S$ has changed because we are using a different triad.

\subsection{A Bertini-type statement} Throughout the paper, the following setup will occur several times.  Suppose $E$, $F$ are vector bundles of ranks $m,n$ on a smooth variety $X$ and the sheaf $\sHom(E,F)$ is globally generated.  For a map $\phi:E\to F$, denote by $D_k(\phi)$ the degeneracy locus $\{x\in X:\rk \phi_x\leq k\}$.

\begin{proposition}\label{bertiniProp}
With the preceding setup, if $\phi$ is general then $D_k(\phi)$ is empty or has the expected codimension $(m-k)(n-k)$.  Furthermore, in case the general $D_k(\phi)$ is nonempty, the locus of $\phi\in\Hom(E,F)$ where $D_k(\phi)$ has greater than the expected dimension is at least of codimension $2$.
\end{proposition}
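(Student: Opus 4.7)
The plan is to introduce the universal degeneracy locus
$$I = \{(x,\phi) \in X \times \Hom(E,F) : \rk \phi_x \leq k\}$$
and exploit its two projections. Write $N = \dim \Hom(E,F)$. Global generation of $\sHom(E,F)$ is precisely the statement that the evaluation $\ev_x : \Hom(E,F) \to \Hom(E_x,F_x)$ is surjective for every $x \in X$, so the fiber of the first projection $p_1 : I \to X$ over $x$ is the preimage under $\ev_x$ of the rank $\leq k$ locus in $\Hom(E_x,F_x) \cong \Mat_{n \times m}$. The rank locus is irreducible of codimension $(m-k)(n-k)$ (the case $k \geq \min(m,n)$ being vacuous), and its preimage under a surjective linear map is an affine bundle over it, hence irreducible of the same codimension. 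Working on each component of $X$ separately, $I$ is then irreducible of dimension $\dim X + N - (m-k)(n-k)$.

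For the first assertion, consider the second projection $\pi := p_2 : I \to \Hom(E,F)$, whose fiber over $\phi$ is exactly $D_k(\phi)$. If $\pi$ is not dominant then $D_k(\phi)$ is empty for general $\phi$; otherwise the fiber dimension theorem gives that the generic fiber has the expected dimension $d := \dim X - (m-k)(n-k)$.

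For the codimension $2$ assertion, set $B = \{\phi : \dim D_k(\phi) \geq d+1\}$, which is closed by upper semicontinuity of fiber dimension. The first part shows $B \subsetneq \Hom(E,F)$. Assuming $X$ is projective (as holds in the applications), $\pi$ is proper, so its dominance makes it surjective, and $\pi^{-1}(B)$ is then a proper closed subset of the irreducible variety $I$, giving $\dim \pi^{-1}(B) \leq \dim I - 1 = N + d - 1$. On the other hand, $\pi^{-1}(B) \to B$ surjects with every fiber of dimension $\geq d+1$, whence
$$\dim \pi^{-1}(B) \geq \dim B + d + 1 = N - \mathrm{codim}\, B + d + 1.$$
Combining the two inequalities forces $\mathrm{codim}\, B \geq 2$.

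The main obstacle is the codimension $2$ claim: the crucial observation is that $\pi^{-1}(B)$ is a proper subvariety of $I$ itself (not merely of $X \times \Hom(E,F)$), which is what upgrades the count from codimension $1$ to codimension $2$. Once the surjectivity of $\pi$ is in hand this is automatic, and the remainder is a short dimension chase.
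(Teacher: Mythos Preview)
Your proof is correct and follows essentially the same approach as the paper's: both introduce the universal degeneracy locus (you work affinely in $X\times\Hom(E,F)$, the paper works projectively via the evaluation map $X\times\P H^0(E^*\otimes F)\to\P(E^*\otimes F)$), observe it is irreducible of the expected dimension, and then deduce the codimension~2 statement from the fact that a jump in fiber dimension along a codimension~1 locus would force the preimage to fill out the whole irreducible incidence variety. Two small remarks: your projectivity assumption on $X$ is unnecessary, since $B\subset\im\pi$ automatically (points with empty fiber are not in $B$), so $\pi^{-1}(B)\to B$ is surjective regardless; and the irreducibility of $I$ deserves slightly more than ``all fibers of $p_1$ irreducible of the same dimension''---it follows cleanly because the evaluation $X\times\Hom(E,F)\to\textrm{Tot}(E^*\otimes F)$ is a surjection of vector bundles over $X$, hence smooth, so $I$ is a smooth fiber bundle over the irreducible universal rank locus.
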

\begin{proof}We quickly sketch the argument, which is just an analysis of the proof of \cite[Theorem 2.8]{Ottaviani}.  By global generation, we have a surjection $$H^0(E^*\te F)\te \OO_X\to E^*\te F\to 0,$$
which shows that the natural evaluation map $$\ev: X\times \P H^0(E^*\te F)\to \P(E^* \te F)$$ is surjective and has fibers isomorphic to $\P^{h^0(E^*\te F)-mn}$.  There is a subvariety $\Sigma_k \subset \P(E^\ast\te F)$ consisting of those points $\phi_x:E_x\to F_x$ such that $\rk(\phi_x)\leq k$, and it is irreducible of codimension $(m-k)(n-k)$.  Then $Z = \ev^{-1}(\Sigma_k)$ is an irreducible variety of dimension $h^0(E^*\te F)-(m-k)(n-k)-1.$  If the projection $q:Z\to \P H^0(E^* \te F)$ is surjective, then the general fiber has dimension $\dim X-(m-k)(n-k)$, so $D_k(\phi)$ has codimension $(m-k)(n-k)$.  Furthermore, in this case the dimension of the fibers of $q$ cannot jump in codimension $1$, as this would violate the irreducibility of $Z$.  Alternately, if $q$ is not surjective then $D_k(\phi)$ is empty for general $\phi$.
\end{proof}

\section{Number-theoretic properties of exceptional slopes}\label{numTheorySection}

The exceptional slopes $\alpha\in \F E$ have many surprising number-theoretic properties.  These will be of utmost importance in proving that the set of nonnegative slopes of stable bundles $V$ such that $\chi(V)/\rk(V) \geq q,$ for $q\in \Q_{\geq 0}$ a fixed nonnegative rational, has a minimum.  

The main goal of this section is to describe nice properties of the continued fraction expansion of any $\alpha\in \F E$.  To do this, we essentially give an algorithm which computes the continued fraction expansion of $\alpha$ in terms of the binary expansion of the dyadic number $p/2^q$ with $\varepsilon(p/2^q) = \alpha$.  

Since the set $\F E$ of exceptional slopes is invariant under translation by $1$, it will suffice to consider only the case where $0\leq \alpha <1$.
For any real numbers $a_0,\ldots,a_k$ for which it makes sense, define the number $$[a_0;a_1,\ldots,a_k] := a_0 + \cfrac{1}{a_1+
  \cfrac{1}{\ddots \raisebox{-1.2ex}{${}+\cfrac{1}{a_k}$}}}
 $$ Recall that any rational number $0\leq \alpha < 1$ has a unique continued fraction expansion $\alpha = [0;a_1,\ldots,a_k]$
 where the $a_i$ are positive integers and $k$ is even.  Indeed, if $k$ is odd with $a_k=1$ then we can write $\alpha = [0;a_1,\ldots,a_{k-1}+1]$; on the other hand if $k$ is odd and $a_k>1$ then $\alpha = [0;a_1,\ldots,a_{k}-1,1].$

Following standard notation, we let $p_n$ and $q_n$ be the numerator and denominator of the rational number $[0;a_1,\ldots,a_n]$, called the $n$th \emph{convergent} of $\alpha$.  With this notation, $\alpha = p_k/q_k$.  The fundamental relation between convergents is encapsulated by the equality of matrices $$ \begin{pmatrix}q_n & q_{n-1}\\ p_n & p_{n-1}\end{pmatrix} = \begin{pmatrix}a_1 & 1\\ 1 & 0 \end{pmatrix}\begin{pmatrix}a_2 & 1\\ 1 & 0\end{pmatrix}\cdots \begin{pmatrix}a_n & 1\\ 1 & 0\end{pmatrix}. $$ It is immediate from computing determinants that $q_np_{n-1}-q_{n-1}p_n=(-1)^{n}$.  

We say that the continued fraction expansion of $0\leq \alpha <1$ is \emph{palindromic} if the word $a_1,a_2,\ldots,a_k$ is a palindrome, i.e. if $a_i = a_{k+1-i}$ for each $i$.  Taking transposes of the above equality of matrices and using the uniqueness of continued fraction expansions of a given length, we  recover the following well-known fact.\footnote{See \cite{MO2} for another argument.}
 
\begin{lemma}
A continued fraction expansion $[0;a_1,\ldots,a_k]$ for the number $\alpha$ is palindromic if and only if $p_k = q_{k-1}$.  That is, the denominator of the penultimate convergent equals the numerator of $\alpha$.
\end{lemma}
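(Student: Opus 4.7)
The plan is to exploit the symmetry of the matrices $A_i = \begin{pmatrix}a_i & 1\\ 1 & 0\end{pmatrix}$, each of which satisfies $A_i^T = A_i$. Writing $M$ for the product $A_1 A_2 \cdots A_k$, so that
$$M = \begin{pmatrix}q_k & q_{k-1}\\ p_k & p_{k-1}\end{pmatrix},$$
transposing yields $M^T = A_k A_{k-1} \cdots A_1$, i.e.\ the matrix product associated with the \emph{reversed} continued fraction $[0;a_k,a_{k-1},\ldots,a_1]$. Thus $M$ and $M^T$ have a clean interpretation in terms of $\alpha$ and its ``reverse''.

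For the forward implication, if $a_i=a_{k+1-i}$ for all $i$ then $A_i = A_{k+1-i}$, so term-by-term the two products $A_1 A_2\cdots A_k$ and $A_kA_{k-1}\cdots A_1$ are identical as sequences of factors, giving $M = M^T$. Reading off the $(2,1)$ and $(1,2)$ entries then forces $p_k=q_{k-1}$.

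For the converse, I would read off the entries of $M^T$ in two different ways. Directly from the transpose,
$$M^T = \begin{pmatrix}q_k & p_k\\ q_{k-1} & p_{k-1}\end{pmatrix},$$
while from the product expansion $M^T = A_k\cdots A_1$ the same matrix is
$\begin{pmatrix}q'_k & q'_{k-1}\\ p'_k & p'_{k-1}\end{pmatrix}$,
where the primed quantities are the convergents of $[0;a_k,a_{k-1},\ldots,a_1]$. Comparing entries gives $q'_k=q_k$ and $p'_k=q_{k-1}$, so the reversed continued fraction evaluates to $q_{k-1}/q_k$. Under the hypothesis $p_k=q_{k-1}$ this common value is $p_k/q_k=\alpha$, so $[0;a_k,\ldots,a_1]=[0;a_1,\ldots,a_k]$; both are even-length expansions of length $k$ (with positive-integer partial quotients), so the uniqueness statement recalled just before the lemma forces $a_i=a_{k+1-i}$ for every $i$, i.e.\ palindromy.

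The only real subtlety is to make sure the uniqueness argument applies on the nose: the reversed expansion must have the correct shape (even length, positive integer entries) so that the uniqueness of even-length continued fractions of a rational in $(0,1)$ can be invoked. Since we just reverse the list $a_1,\ldots,a_k$, the length and positivity are automatic, so no real obstacle remains — the proof is essentially the observation that $A_i=A_i^T$ combined with uniqueness of continued fraction expansions.
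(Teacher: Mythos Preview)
Your proof is correct and follows exactly the approach sketched in the paper: transpose the matrix identity (using that each $A_i$ is symmetric) to obtain the convergent matrix of the reversed expansion, then invoke uniqueness of continued fraction expansions of a given length. You have simply filled in the details the paper leaves implicit.
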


With preliminaries out of the way, we are now ready to state and prove our main result on the continued fraction expansion of an exceptional slope $\alpha\in \F E$.

\begin{theorem}
Let $0 \leq \alpha <1$ be an exceptional slope.  The unique continued fraction expansion $\alpha = [0;a_1,\ldots,a_k]$ with $k$ even is palindromic, and every $a_i$ is either $1$ or $2$.  Furthermore, 
\begin{enumerate}
\item every block of ones in the word $a_1,\ldots,a_k$ has even length, and
\item every block of twos in the word $a_2,\ldots,a_{k-1}$ has even length.
\end{enumerate}
\end{theorem}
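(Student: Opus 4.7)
The plan is to proceed by strong induction on the dyadic depth $q$ of $\alpha = \varepsilon(p/2^q) \in (0,1)$. The base case $q=1$ gives $\alpha = 1/2 = [0;1,1]$, which is palindromic, consists entirely of $1$'s, has a single block of $1$'s of even length, and has empty inner word $a_2,\ldots,a_{k-1}$, so condition (2) holds vacuously.

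For the inductive step, I would write $\alpha = \alpha_1.\alpha_2$ where $\alpha_i = \varepsilon(p_i/2^{q-1})$ are exceptional of strictly smaller dyadic depth (or lie in $\{0,1\}$, treated as boundary cases). The heart of the argument is to describe the continued fraction expansion of $\alpha_1.\alpha_2$ explicitly in terms of those of $\alpha_1$ and $\alpha_2$. I plan to do this via the matrix calculus of continued fractions: attach to each palindromic expansion $[0;b_1,\ldots,b_{2m}]$ the symmetric integer matrix
\[
M_\gamma \;=\; \prod_{i=1}^{2m}\begin{pmatrix} b_i & 1 \\ 1 & 0 \end{pmatrix} \;=\; \begin{pmatrix} r_\gamma & c_\gamma \\ c_\gamma & d_\gamma \end{pmatrix},
\]
whose entries encode $\gamma = c_\gamma/r_\gamma$ in lowest terms along with an auxiliary integer $d_\gamma$ satisfying $r_\gamma d_\gamma - c_\gamma^2 = 1$ (since $\det M_\gamma = (-1)^{2m} = 1$); symmetry is equivalent to palindromicity. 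The arithmetic identities in Lemma \ref{numericalProps} yield $r_{\alpha_1.\alpha_2} = 3 r_1 r_2 + c_1 r_2 - c_2 r_1$ and the companion identity $c_{\alpha_1.\alpha_2} r_1 - c_1 r_{\alpha_1.\alpha_2} = r_2$, which together with the analogous relations for $d_{\alpha_1.\alpha_2}$ give closed-form expressions for the entries of $M_{\alpha_1.\alpha_2}$ in terms of those of $M_{\alpha_1}$ and $M_{\alpha_2}$. From these expressions I expect to factor $M_{\alpha_1.\alpha_2}$ as a product of $M_{\alpha_1}$, $M_{\alpha_2}$, and one or two small ``gluing'' matrices of the form $\bigl(\begin{smallmatrix} a & 1 \\ 1 & 0 \end{smallmatrix}\bigr)$ with $a \in \{1,2\}$, which directly produces the CF of $\alpha_1.\alpha_2$ as a specific concatenation of the CFs of $\alpha_1$ and $\alpha_2$.

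Once the concatenation rule is in hand, all four properties follow by inspection: palindromicity is preserved because the two parent words are palindromic and the gluing is symmetric about the center; entries stay in $\{1,2\}$ because the gluing digits do; and the parity conditions on blocks of $1$'s (throughout the word) and blocks of $2$'s (in the interior) are maintained because gluing inserts only complete pairs of $1$'s, and because an initial or terminal $2$ of one parent CF gets absorbed into a boundary position $a_1$ or $a_k$ of the child rather than into an interior $2$-block. The chief obstacles I anticipate are (i) pinning down the correct factorization, which will require a small case analysis depending on the terminal digit of $\alpha_1$'s CF and the initial digit of $\alpha_2$'s CF (constrained by palindromicity so that these digits agree in each parent), and (ii) the endpoint cases $\alpha_1 = 0$ or $\alpha_2 = 1$ where one parent has empty CF and the matrix factorization degenerates; these I would anchor with a direct computation along the edges, showing e.g. $\varepsilon(1/2^q) = [0;2,\underbrace{1,\ldots,1}_{2(q-2)},2]$ and its mirror image, which explains precisely why the parity statement for $2$-blocks must exempt the boundary positions $a_1$ and $a_k$.
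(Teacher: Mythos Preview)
Your strategy coincides with the paper's: induct on dyadic depth, and exhibit the continued fraction of $\alpha.\beta$ as an explicit concatenation built from the expansions of $\alpha$ and $\beta$. Your matrix reformulation is equivalent to the paper's direct continued-fraction identities. In the paper's notation (with $\alpha<\beta$ adjacent), the factorization you are looking for is exactly
\[
M_{\alpha.\beta}\;=\;M_\beta\cdot G\cdot M_\alpha,\qquad G=\begin{pmatrix}3&1\\-1&0\end{pmatrix},
\]
and the case split you anticipate on the terminal digit of $\beta$ is precisely what is needed to absorb the negative entry of $G$ into a legal continued fraction: if $\beta=[0;b_1,\dots,b_{l-1},2]$ one gets $[0;b_1,\dots,b_{l-1},1,1,2,a_1,\dots,a_k]$, and if $\beta=[0;b_1,\dots,b_{l-2},1,1]$ one gets $[0;b_1,\dots,b_{l-2},2,2,a_1,\dots,a_k]$. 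From this, the block-parity claims and the $\{1,2\}$-alphabet claim follow by inspection, as you say. Your boundary computation $\varepsilon(1/2^q)=[0;2,1,\dots,1,2]$ is correct, and the paper handles the other edge via $\varepsilon(1-2^{-k})=F_{2k}/F_{2k+1}=[0;1,\dots,1]$.

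There is, however, a genuine gap in your palindromicity argument. The concatenation is \emph{not} symmetric about the center: the word is $[\text{modified }\beta,\ \text{glue},\ \alpha]$, and the $\alpha$-word and $\beta$-word are different in general. Your sentence ``palindromicity is preserved because the two parent words are palindromic and the gluing is symmetric about the center'' is therefore not a proof. In matrix language, palindromicity of $\alpha.\beta$ is exactly the statement that $M_\beta\,G\,M_\alpha$ is symmetric, i.e.\ that $M_\beta\,G\,M_\alpha=M_\alpha\,G^{T}\,M_\beta$ (using $M_\alpha^T=M_\alpha$ and $M_\beta^T=M_\beta$ from the inductive hypothesis). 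This is not formal; it uses the specific arithmetic relation between the adjacent slopes. Concretely, with $c_\gamma=\gamma r_\gamma$ and $d_\gamma=(c_\gamma^2+1)/r_\gamma$, equating the off-diagonal entries reduces to
\[
r_\beta d_\alpha+r_\alpha d_\beta-2c_\alpha c_\beta \;=\; -3(r_\beta c_\alpha-r_\alpha c_\beta),
\]
which after dividing by $r_\alpha r_\beta$ and using $1/r_\gamma^2=1-2\Delta_\gamma$ is exactly Lemma~\ref{numericalProps}(3), $P(\alpha-\beta)=\Delta_\alpha+\Delta_\beta$. The paper establishes the same fact in continued-fraction language by computing both $[0;b_1,\dots,b_l,-(3+\alpha)]$ and $[0;a_1,\dots,a_k,3-\beta]$ and showing each equals $\alpha.\beta$ via the palindromic identities $q_{l-1}=p_l$ and Lemma~\ref{numericalProps}. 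Once you supply this step, your approach goes through; but as written, palindromicity is asserted without justification.
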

\begin{proof}
The theorem is clearly true for $\alpha = 0$.  Any exceptional slope in the interval $(0,1)$ can be written uniquely in the form $\alpha.\beta$,
where $$\alpha = \varepsilon\left(\frac{p\vphantom{1}}{2^q}\right) \qquad \beta = \varepsilon\left(\frac{p+1}{2^q}\right)$$ for integers $p,q$ with $0\leq p \leq  2^q - 1$ and $q\geq 0$.  We wish to induct on $q$.  There is a slight difficulty, in that perhaps $\beta = 1$, where the integer part of the even length continued fraction expansion is not $0$.  To circumvent this, we simply note that for every $k\geq 0$ we have $$\varepsilon\left(1-2^{-k}\right)=\frac{F_{2k}}{F_{2k+1}} =[0;\underbrace{1,\ldots,1}_{2k \textrm{ copies}}],$$ where $F_0 = 0$, $F_1 = 1$, $F_{n+2} = F_{n+1}+F_n$ is the Fibonacci sequence.  Thus the theorem is true for all slopes of the form $\varepsilon(1-2^{-k}).$ For any $\alpha.\beta$ not of the form $\varepsilon(1-2^{-k})$ we will have $\beta <1$, and thus we may assume by induction that the theorem holds for $\alpha$ and $\beta$.

We now describe how to compute the continued fraction expansion of $\alpha.\beta$ when the theorem holds for $\alpha = [0;a_1,\ldots,a_k]$ and $\beta = [0;b_1,\ldots,b_l]$ (where $k,l$ are even so that the expansions are palindromic).  First suppose that $\beta$ is of the form $$\beta = [0;b_1,b_2,\ldots,b_{l-1},2].$$  Consider the continued fraction $$[0;b_1,b_2,\ldots,b_{l-1},1,1,2,a_1,\ldots,a_k].$$ Visibly every term is a $1$ or a $2$, and one easily checks the hypotheses on the lengths of blocks of ones and twos are satisfied by this new word.  The length of the word is also even.  Furthermore, the equality $[1;1,2+x]=[2;-(3+x)],$ valid for any real $x\neq -2,-3$, shows that this continued fraction equals $$[0;b_1,\ldots,b_{l-1},2,-(3+\alpha)]=[0;b_1,\ldots,b_l,-(3+\alpha)].$$ We'll show in a minute that this number is precisely $\alpha.\beta$.  Before that, we handle the other possible form of $\beta$.  If instead $$\beta = [0;b_1,\ldots,b_{l-2},1,1]$$ (recalling that ones occur in blocks of even length) we consider the fraction $$[0;b_1,\ldots,b_{l-2},2,2,a_1,\ldots,a_k],$$ again easily verifying that the condition on the parity of lengths of blocks is satisfied.  Here the equality $[2;2+x] = [1,1,-(3+x)],$ valid again for real $x\neq -2,-3$, shows that this fraction equals $$[0;b_1,\ldots,b_{l-2},1,1,-(3+\alpha)]=[0;b_1,\ldots,b_l,-(3+\alpha)].$$
Thus, in either case we must show $$\alpha.\beta = [0;b_1,\ldots,b_l,-(3+\alpha)].$$

Proving this relies on the palindromic property of the continued fraction expansion $\beta = [0;b_1,\ldots,b_l]$.  Writing $p_i/q_i$ for the convergents of $[0;b_1,\ldots,b_l,-(3+\alpha)],$ we have the relation $$\frac{p_{l+1}}{q_{l+1}} = \frac{-(3+\alpha)p_l+p_{l-1}}{-(3+\alpha)q_l+q_{l-1}},$$ and we must show this number equals $\alpha.\beta$.  From the palindromic property of $\beta$, we get $q_{l-1} = p_l$.  Writing everything in terms of $\beta$ and $r_\beta$, we thus have $$p_l = \beta r_\beta \qquad q_l = r_\beta \qquad q_{l-1} = \beta r_\beta \qquad p_{l-1} = \frac{1}{r_\beta}+\beta^2 r_\beta,$$ where $p_{l-1}$ is determined by the relation $p_{l-1}q_{l}-p_{l}q_{l-1} = 1$, recalling that $l$ is even.  Making the substitutions, basic algebra (using no special properties of $\alpha,\beta$) shows $$\beta - \frac{p_{l+1}}{q_{l+1}} = \frac{1}{r_\beta^2(3+\alpha-\beta)}.$$ Comparing this with Lemma \ref{numericalProps}, we conclude $p_{l+1}/q_{l+1} = \alpha.\beta$.

To complete the proof, it remains to show that our discovered continued fraction expansion for $\alpha.\beta$ is palindromic.  To do this, we take the expansion we found and verify that reversing the terms gives a fraction that also equals $\alpha.\beta$; by uniqueness of even length expansions we conclude the expansion is palindromic.

So first suppose we are in the case where $b_l = 2$.  The fraction obtained by reversing the terms of $$[0;b_1,\ldots,b_{l-1},1,1,2,a_1,\ldots,a_k]$$ is the fraction $$[0;a_1,\ldots,a_k,2,1,1,b_{2},\ldots,b_l]$$ making use of the palindromic hypothesis on $\alpha$ and $\beta$.  Now $[1;b_2,\ldots,b_l]=\beta^{-1}-1=[-1;\beta],$ and for any $x\neq 0,1$ we have $[2;1,-1,x]=3-x.$  Thus this fraction equals $$[0;a_1,\ldots,a_k,3-\beta].$$ Similarly, in case $b_l = b_{l-1} = 1$, the fraction obtained by reversing the terms of $$[0;b_1,\ldots,b_{l-2},2,2,a_1,\ldots,a_k]$$ is $$[0;a_1,\ldots,a_k,2,2,b_3,\ldots,b_{l}].$$ One easily checks $[2;2,b_3,\ldots,b_l]=3-\beta$, so in this case the fraction also equals $[0;a_1,\ldots,a_k,3-\beta]$.  To complete the proof we must verify that $\alpha.\beta = [0;a_1,\ldots,a_k,3-\beta]$.  Letting $p_i/q_i$ be the convergents, we use the palindromic property of $\alpha$ to easily compute $$\frac{p_{k+1}}{q_{k+1}} - \alpha = \frac{1}{r_\alpha^2(3+\alpha-\beta)}.$$ Again comparing with Lemma \ref{numericalProps}, we conclude $p_{k+1}/q_{k+1} = \alpha.\beta$.
\end{proof}

An immediate consequence of our description of the continued fraction expansion of $\alpha\in \F E$ is the following elementary congruence, which we will need later.  We do not know of a simple proof of this fact that does not make use of continued fraction methods.

\begin{corollary}\label{congruentCor}
If $\alpha\in \F E$ is an exceptional slope, then $(\alpha r_\alpha)^2 \equiv -1 \pmod{r_\alpha}$.
\end{corollary}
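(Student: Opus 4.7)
The plan is to deduce the congruence directly from the palindromic continued-fraction expansion established in the theorem just proved, together with the standard determinant identity for convergents. Since the set $\F E$ is invariant under translation by $1$ (and the congruence is too), it suffices to handle the case $0 \leq \alpha < 1$.

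Write the even-length continued fraction expansion $\alpha = [0;a_1,\ldots,a_k]$ and let $p_n/q_n$ denote its $n$th convergent, so that $p_k = \alpha r_\alpha$ and $q_k = r_\alpha$. By the matrix product formula recorded before the palindromic lemma, taking determinants gives
$$q_k p_{k-1} - q_{k-1} p_k = (-1)^k = 1,$$
using that $k$ is even.

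The key input is the palindromic lemma, which together with the previous theorem tells us that the expansion $[0;a_1,\ldots,a_k]$ is palindromic, hence $q_{k-1} = p_k$. Substituting this and the identifications $p_k = \alpha r_\alpha$, $q_k = r_\alpha$ into the determinant identity yields
$$r_\alpha \, p_{k-1} - (\alpha r_\alpha)^2 = 1,$$
which is precisely the congruence $(\alpha r_\alpha)^2 \equiv -1 \pmod{r_\alpha}$.

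There is really no obstacle here beyond invoking the palindromic theorem: the argument is a two-line consequence of that result combined with the elementary fact that the product of matrices $\begin{pmatrix} a_i & 1 \\ 1 & 0 \end{pmatrix}$ has determinant $(-1)^k$. The only thing worth flagging is the role of the parity of $k$: it is essential both for the palindromic statement (which is phrased for the even-length expansion) and for the sign $(-1)^k = 1$ which produces $-1$ rather than $+1$ in the final congruence.
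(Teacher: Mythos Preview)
Your proof is correct and follows essentially the same approach as the paper: reduce to $0\leq\alpha<1$, invoke the even-length palindromic expansion to get $q_{k-1}=p_k$, and combine with the determinant identity $q_kp_{k-1}-q_{k-1}p_k=1$ to obtain $r_\alpha p_{k-1}-(\alpha r_\alpha)^2=1$. The only difference is that you spell out the role of the parity of $k$ a bit more explicitly.
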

\begin{proof}
Clearly the congruence only depends on the fractional part of $\alpha$, so we may assume $0\leq \alpha< 1$.  If $[0;a_1,\ldots,a_k]$ is the even length palindromic continued fraction expansion of $\alpha$, then $q_k p_{k-1}- q_{k-1}p_k = 1$, which in light of the palindrome condition gives $r_\alpha p_{k-1}-(\alpha r_\alpha)^2 = 1$.
\end{proof}

\begin{corollary}\label{cantorFractionsCor}
Let $C = \R \setminus \bigcup_{\alpha\in \F E} I_\alpha$.  If $\xi \in C$, then the fractional part of the continued fraction expansion of $\xi$ has only ones and twos in it.
\end{corollary}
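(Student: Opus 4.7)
The plan is to approximate $\xi\in C$ by exceptional slopes $\alpha_n\in\F E$ with $\alpha_n\to\xi$ and $r_{\alpha_n}\to\infty$, then invoke the continuity of the continued fraction algorithm at the irrational point $\xi$ together with the preceding theorem. Since $\F E$ is invariant under integer translation, I reduce to $0<\xi<1$; and since every rational lies in some $I_\alpha$ while $\xi\in C$, the point $\xi$ is irrational with infinite continued fraction expansion $\xi=[0;a_1,a_2,\ldots]$.

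For the approximation step, I will track the connected component $[L_q,R_q]$ of $\R\setminus\bigcup_{\mathrm{level}(\alpha)\le q}I_\alpha$ containing $\xi$, whose endpoints have the form $L_q=\gamma+x_\gamma$ and $R_q=\delta-x_\delta$ for exceptional $\gamma,\delta$ of level at most $q$. Because $C$ has empty interior (any open interval contains a rational, which lies in $\bigcup_\alpha I_\alpha$), we have $R_q-L_q\to 0$, so $L_q,R_q\to\xi$. Unless $\xi$ is itself a boundary point $\alpha\pm x_\alpha$, at least one endpoint sequence must change at infinitely many levels; each change is caused by removing an interval $I_\gamma$ for a new exceptional slope $\gamma$. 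The relation $r_{\alpha.\beta}=r_\alpha r_\beta(3-\alpha+\beta)$ from Lemma \ref{numericalProps} shows that the minimum denominator of exceptional slopes of level exactly $q$ grows to infinity with $q$, hence $x_\gamma\to 0$ along these $\gamma$'s and the slopes themselves converge to $\xi$. The edge case when $\xi$ is a boundary point is handled analogously using the opposite endpoint sequence, which cannot stabilize without contradicting $R_q-L_q\to 0$.

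Granted such a sequence $\alpha_n\to\xi$ with $r_{\alpha_n}\to\infty$, I note that the continued fraction length $k_n$ of $\alpha_n$ also tends to infinity: by the preceding theorem the partial quotients of $\alpha_n$ are bounded by $2$, so the denominators grow at most exponentially in the length, and $r_{\alpha_n}=q_{k_n}\to\infty$ forces $k_n\to\infty$. By the standard continuity of the continued fraction algorithm at the irrational point $\xi$, for each fixed $N$ the first $N$ partial quotients of $\alpha_n$ coincide with $a_1,\ldots,a_N$ for all large $n$. The preceding theorem says these partial quotients of $\alpha_n$ lie in $\{1,2\}$, so $a_i\in\{1,2\}$ for $i\le N$. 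Letting $N\to\infty$ completes the proof.

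The main obstacle is the density step: ensuring that one can approximate an arbitrary $\xi\in C$ by exceptional slopes themselves (and not merely by boundary points of the removed intervals) relies on the fact that the Cantor-set construction forces the underlying denominators $r_\gamma$ to grow without bound. Once this is set up, the remainder is a routine combination of the preceding theorem with the continuity of continued fractions at irrational points.
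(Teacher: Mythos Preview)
Your proof is correct and follows the same approach as the paper: approximate $\xi$ by exceptional slopes and use continuity of the continued fraction algorithm at irrational points together with the preceding theorem. The paper's version is terser because it takes the fact that every point of $C$ is a limit of exceptional slopes as already established (see the discussion before Proposition \ref{deltaContinuous}), whereas you reprove it via the shrinking-component argument; your additional step ensuring $k_n\to\infty$ is not strictly needed, since the continuity of partial quotients at an irrational $\xi$ already forces nearby points to have at least $N$ partial quotients.
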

\begin{proof}
Since $\bigcup_{\alpha} I_\alpha$ covers the rationals, $\xi$ is irrational. Inspection of the continued fraction algorithm reveals that for each $k$, there is an $\epsilon>0$ such that all $\lambda\in (\xi-\epsilon,\xi+\epsilon)$ have the same first $k$ terms in their continued fraction expansions as $\xi$.  But every element of $C$ is a limit of exceptional slopes.\end{proof}

\begin{corollary}\label{repeatingFracCor}
Let $D>5$ be a rational number.  The number $$\xi = \frac{-3+\sqrt{D}}{2}$$ lies in $I_\alpha$ for some $\alpha\in \F E$.  That is, it is not in the generalized Cantor set $C$.
\end{corollary}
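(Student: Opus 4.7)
The plan is to argue by contradiction. Suppose $\xi\in C$. Since $\bigcup_\alpha I_\alpha\supset\mathbb{Q}$, the number $\xi$ must be irrational, so $D$ is not a rational square and $\xi$ is a quadratic irrational satisfying $\xi^2+3\xi+(9-D)/4=0$. Moreover, since $\F E$ is invariant under $\alpha\mapsto-\alpha$ and $\alpha\mapsto\alpha+1$, the set $C$ is invariant under $\xi\mapsto -3-\xi$, and hence the algebraic conjugate $\xi'=-3-\xi=(-3-\sqrt{D})/2$ also lies in $C$. By Lagrange's theorem, the continued fractions of both $\xi$ and $\xi'$ are eventually periodic.

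The heart of the argument is a refinement of the proof of Corollary \ref{cantorFractionsCor}. There it is shown that any $\xi\in C$ is a limit of exceptional slopes $\alpha_n$ whose continued fractions agree with that of $\xi$ in arbitrarily long initial segments. The main theorem of this section furnishes additional constraints on each $\alpha_n$: every block of ones has even length, and every block of twos that is internal to the expansion has even length. Transferring to the limit, any finite block of ones or internal block of twos in the fractional CF of $\xi$ is eventually contained in the agreed-upon prefix and so must itself have even length. Thus the fractional CF of $\xi$ has (i) all partial quotients in $\{1,2\}$, (ii) every finite $1$-block of even length, and (iii) every finite internal $2$-block of even length. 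Eventual periodicity then forces the periodic part to be built from blocks obeying these parities.

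The final step is to combine the algebraic relations $\xi+\xi'=-3$ and $\xi\xi'=(9-D)/4$ with this parity-constrained periodic CF structure and derive a contradiction with $D>5$. Conceptually, writing the periodic tail as the fixed point of a word in the matrices $M_i=\left(\begin{smallmatrix}i & 1\\ 1 & 0\end{smallmatrix}\right)$ for $i\in\{1,2\}$ and expressing $\xi$ as a Mobius transform in $\GL_2(\Z)$ of this tail, the conjugate conditions translate into a Pell-type Diophantine equation in the entries of the matrix product and the preperiod convergents. A direct matrix analysis, proceeding by induction on the period length and exploiting the palindromic structure inherited through the limiting argument, should show that the only solutions occur when $D\le 5$, producing the desired contradiction. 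The main obstacle is carrying out this Pell-type analysis: one must rule out $D>5$ uniformly across all preperiod lengths and even-block-structured periods, which appears to require a careful trace computation using the generators $M_1^2$ and $M_2^2$ of the subgroup of $\SL_2(\Z)$ singled out by the parity constraints, combined with the palindromic restrictions on admissible words.
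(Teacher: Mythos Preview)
Your proposal has a genuine gap: the ``final step'' is never carried out. You describe a program---Pell-type Diophantine equations in the entries of matrix products, trace computations in the subgroup generated by $M_1^2$ and $M_2^2$, induction on period length---but you do not execute any of it, and the phrases ``should show'' and ``appears to require'' signal that you have not verified it works. Without that computation, nothing in the argument distinguishes $D>5$ from $D=5$, so the contradiction is not obtained.

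The paper's proof avoids this entirely by using only Corollary~\ref{cantorFractionsCor}, not the finer parity constraints. The key observation is that $\xi+r+2$, where $r$ is chosen so that $(2r+1)^2<D<(2r+3)^2$, is a purely periodic quadratic irrational (by Galois's criterion: it exceeds $1$ and its conjugate lies in $(-1,0)$), and its integer part is $2r+1\geq 3$. Hence the continued fraction of $\xi$ contains the partial quotient $2r+1\geq 3$, which already contradicts membership in $C$ via Corollary~\ref{cantorFractionsCor}. No parity analysis, no matrix computation, and no Pell equation is needed. Your route through the block-parity constraints is strictly harder and, as written, incomplete.
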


Note that the numbers $\alpha\pm x_\alpha$ are all quadratic irrationals, and they all lie in $C$.  Thus many quadratic irrationals do not lie in some $I_\alpha$.  Furthermore, when $D=5$ the result is false, as then $\xi = 0 - x_0$.  We thank Henry Cohn for showing us the following argument \cite{MO3}. 

\begin{proof}
 If $D$ is a square the result is obvious, so we may assume $\xi$ is irrational.  Also, if $5<D<9$ then $0-x_0< \xi < 0$, so $\xi\in I_0$.  Thus we assume $D>9$ is not a square.

Let $r$ be the positive integer such that $$\label{rineq}(2r+1)^2 < D < (2r+3)^2.$$ We claim the (repeating) continued fraction expansion of $\xi$ takes the form $$[r-1;\overline{a_1,\ldots,a_k}]$$ with $a_k = 2r+1$.  Since $2r+1\geq 3$, we will conclude from Corollary \ref{cantorFractionsCor} that $\xi \notin C$ and hence $\xi\in I_\alpha$ for some $\alpha$.

Consider the number $\xi + r + 2$.  The integer part of this number is $2r+1$, so it will suffice to show that it is purely periodic, i.e. that $$\xi + r + 2= [\overline{2r+1;a_1,\ldots,a_{k-1}}].$$ It is well known \cite{Davenport} that a quadratic irrational is purely periodic if and only if it is larger than $1$ and its algebraic conjugate is between $-1$ and $0$.  Thus we must only check the inequalities $$-1< r+2 + \frac{-3 - \sqrt{D}}{2}< 0,$$ and these are equivalent to $(2r+1)^2<D<(2r+3)^2$.
\end{proof}

\section{The associated exceptional bundle}\label{gammaSection}

Let $Z\in \P^{2[n]}$ be a general point.  In the next section, we will determine a particularly nice resolution of the ideal sheaf $I_Z$ by a semi-exceptional bundle and a stable bundle.  One of the more challenging aspects of finding this resolution is simply determining which exceptional bundle is the correct one.  The goal of this section is to determine the slope of the exceptional bundle which is naturally associated to the ideal sheaf $I_Z$.

As a first goal, we aim to determine the minimum possible slope $\mu$ of a stable bundle $V$ with the property that $\chi(V) \geq \rk(V)n$.  
To do this, we introduce an auxiliary function $\gamma:\Q_{\geq 0}\to \Q_{\geq 0}$ on the nonnegative rationals by the formula $$\gamma(\mu)= P(\mu)-\delta(\mu),$$ noting that $\gamma(0)=0$. By Theorem \ref{deltaClassification}, for any rational numbers $\mu,\Delta$, there exists a non-exceptional stable bundle $V$ with slope $\mu$ and discriminant $\Delta$ if and only if $\Delta \geq \delta(\mu)$.  Equivalently, there exists such a bundle if and only if $$\gamma(\mu) \geq P(\mu)-\Delta = \frac{\chi(V)}{\rk(V)}.$$  Thus for each $\mu\in \Q_{\geq 0}$ the maximum value of the ratio $\chi(V)/\rk(V)$ over all non-exceptional stable bundles $V$ of slope $\mu$ is precisely $\gamma(\mu)$.

Since $\delta$ admits a unique continuous extension to $\R$ by Proposition \ref{deltaContinuous}, we see immediately that $\gamma$ also admits a continuous extension to a function $\gamma: \R_{\geq 0} \to \R_{\geq 0}$.  Let us establish several other elementary properties of the function $\gamma$.

\begin{proposition}\label{gammaProp}
The function $\gamma:\R_{\geq 0}\to \R_{\geq 0}$
\begin{enumerate}
\item is strictly increasing,
\item is piecewise linear with rational coefficients on each interval $I_\alpha$, where $\alpha\in \F E$, and
\item is unbounded.
\end{enumerate}
In particular, $\gamma$ has an inverse.
\end{proposition}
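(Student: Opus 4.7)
The plan is to work directly from $\gamma(\mu)=P(\mu)-\delta(\mu)$, using the explicit formula $\delta(\mu)=P(-|\mu-\alpha|)-\Delta_\alpha$ on each $I_\alpha$ together with the identity $\delta\equiv 1/2$ on the generalized Cantor set $C$, which come from Theorem~\ref{deltaClassification} and Proposition~\ref{deltaContinuous}.

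First I would establish (2) by expanding $\gamma$ on $I_\alpha$ after splitting at $\mu=\alpha$. The quadratic terms cancel, so $\gamma$ is affine with slope $\alpha+3$ on $[\alpha,\alpha+x_\alpha)$ and affine with slope $\alpha$ on $(\alpha-x_\alpha,\alpha]$. Since $\alpha,\Delta_\alpha\in\Q$, all coefficients are rational, proving (2). As a byproduct both slopes are nonnegative on $\R_{\geq 0}$, with $\alpha+3$ always strictly positive; hence $\gamma$ is strictly increasing on each piece of $I_\alpha\cap\R_{\geq 0}$ (for $\alpha=0$ only the right branch meets $\R_{\geq 0}$).

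For (1), I would combine this with the observation that on $C$ one has $\gamma(\xi)=P(\xi)-1/2$, which is strictly increasing on $\R_{\geq 0}$ since $P$ is. For $0\leq \mu_1<\mu_2$ in different components of $\R_{\geq 0}\setminus C$, disjointness of distinct intervals together with $\partial I_\alpha\subset C$ guarantees an endpoint $\xi\in C$ with $\mu_1<\xi<\mu_2$ (or a shared boundary point between two abutting intervals). Continuity of $\gamma$ at such $\xi$, the identity $\gamma(\xi)=P(\xi)-1/2$, and piecewise monotonicity then yield a sandwich $\gamma(\mu_1)<\gamma(\xi)\leq\gamma(\mu_2)$. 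For (3), on $I_\alpha$ the explicit formula gives $\delta(\mu)\leq P(0)-\Delta_\alpha<1$, while $\delta\equiv 1/2$ on $C$, so $\delta$ is uniformly bounded above by $1$; therefore $\gamma(\mu)\geq P(\mu)-1\to\infty$. Together with $\gamma(0)=0$ (from $0\in I_0$, $\Delta_0=0$, $\delta(0)=1$), this makes $\gamma$ a continuous strictly increasing surjection $\R_{\geq 0}\to\R_{\geq 0}$, hence invertible.

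The main obstacle is the gluing needed for strict monotonicity across $C$: the intervals $I_\alpha$ may accumulate on points of $C$, so monotonicity cannot be verified one interval at a time. The resolution is that the boundary values $\gamma(\alpha\pm x_\alpha)=P(\alpha\pm x_\alpha)-1/2$ agree with the formula used to define $\gamma$ on $C$, so the linear pieces on the various $I_\alpha$ and the restriction of $\gamma$ to $C$ stitch together through a common strictly increasing envelope $P-1/2$.
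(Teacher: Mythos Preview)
Your proof is correct and follows essentially the same approach as the paper: derive the explicit piecewise linear formula for $\gamma$ on each $I_\alpha$ (slopes $\alpha$ and $\alpha+3$), observe these slopes are positive on $\R_{\geq 0}$, and glue across $C$ using $\gamma|_C = P - \tfrac{1}{2}$, which is strictly increasing. The paper's version is slightly more terse---it compares $\gamma(\alpha+x_\alpha)=P(\alpha+x_\alpha)-\tfrac{1}{2}$ with $\gamma(\beta-x_\beta)=P(\beta-x_\beta)-\tfrac{1}{2}$ directly---but the content is the same. One small imprecision: your bound $\delta(\mu)\leq P(0)-\Delta_\alpha<1$ fails to be strict when $\alpha\in\Z$ (there $\Delta_\alpha=0$ and $\delta(\alpha)=1$), but $\delta\leq 1$ suffices for (3), so this does not affect the argument.
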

\begin{proof}
We first record a more explicit formula for $\gamma$.  On any interval $I_\alpha$, $\gamma$ takes the form
$$\gamma(\mu) = \begin{cases} \alpha (\mu+3) +1 + \Delta_\alpha-P(\alpha) & \textrm{if }\mu\in (\alpha-x_\alpha,\alpha]\\ (\alpha+3)\mu+1+\Delta_\alpha-P(\alpha) & \textrm{if }\mu\in [\alpha,\alpha+x_\alpha).\end{cases}$$
Properties (2) and (3) follow immediately.  Clearly $\gamma$ is increasing on each interval $(\alpha-x_\alpha,\alpha+x_\alpha)$.  Thus to see $\gamma$ is strictly increasing, it will suffice to see that if $0\leq \alpha <\beta$ are exceptional slopes then $\gamma(\alpha+x_\alpha)<\gamma(\beta-x_\beta)$.  But $$\gamma(\alpha+x_\alpha) = P(\alpha+x_\alpha)-\delta(\alpha+x_\alpha)=P(\alpha+x_\alpha)-\frac{1}{2}<P(\beta-x_\beta)-\frac{1}{2}=P(\beta-x_\beta)-\delta(\beta-x_\beta)=\gamma(\beta-x_\beta)$$ since the function $P(x)$ is increasing on $[-3/2,\infty)$ and $\alpha+x_\alpha< \beta-x_\beta$.
\end{proof}

Let $q\in \Q_{\geq 0}$ be fixed.  Since $\gamma$ is increasing, we conclude that there exists a non-exceptional stable bundle $V$ of slope $\mu\geq 0$ with $$q \leq \chi(V)/\rk(V) \leq \gamma(\mu)$$ if and only if $\gamma^{-1}(q)\leq \mu$.  Thus the set of slopes of non-exceptional stable bundles with $\mu(V)\geq 0$ and $\chi(V)/\rk(V)\geq q$ has a minimum if and only if $\gamma^{-1}(q)$ is rational.   That this is always the case follows from our investigation into the number theory of exceptional slopes, as we shall now see.  

\begin{theorem}
The function $\gamma:\Q_{\geq 0} \to \Q_{\geq 0}$ is a bijection.  
\end{theorem}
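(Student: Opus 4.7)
The plan is to deduce both injectivity and surjectivity of $\gamma$ from what has been established, with essentially all the work concentrated in ruling out that $\gamma^{-1}(q)$ could be irrational for a rational $q$.

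Proposition \ref{gammaProp} already shows that $\gamma\colon \R_{\geq 0}\to \R_{\geq 0}$ is continuous, strictly increasing, and unbounded, with $\gamma(0)=0$. Hence $\gamma$ is a continuous bijection on the reals, so its restriction to $\Q_{\geq 0}$ is automatically injective, and from the explicit piecewise linear formula with rational coefficients recorded in the proof of Proposition \ref{gammaProp}, $\gamma$ carries rationals in each interval $I_\alpha$ to rationals. What remains is to prove surjectivity onto $\Q_{\geq 0}$: given $q\in\Q_{\geq 0}$, let $\mu=\gamma^{-1}(q)\in\R_{\geq 0}$, and show $\mu\in\Q$.

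I would split into two cases according to whether $\mu\in I_\alpha$ for some $\alpha\in\F E$, or $\mu\in C = \R\setminus \bigcup_\alpha I_\alpha$. In the first case, $\gamma$ is affine linear with rational coefficients on $I_\alpha$, so the unique solution of $\gamma(\mu)=q$ on $I_\alpha$ is rational and we are done. Thus the whole problem reduces to ruling out $\mu\in C$. Suppose for contradiction that $\mu\in C$. By Proposition \ref{deltaContinuous}, $\delta(\mu)=1/2$, so
$$q = \gamma(\mu) = P(\mu) - \tfrac{1}{2} = \tfrac{1}{2}(\mu^2+3\mu+1),$$
which, since $\mu\geq 0$, forces
$$\mu = \frac{-3+\sqrt{5+8q}}{2}.$$
If $q=0$ then $\mu=0$, but $0\in I_0$, not in $C$, contradiction. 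If $q>0$, then $D:=5+8q$ is a rational strictly greater than $5$, and Corollary \ref{repeatingFracCor} gives $\mu = (-3+\sqrt{D})/2 \in I_\alpha$ for some $\alpha\in\F E$, again contradicting $\mu\in C$.

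The main obstacle is hidden inside Corollary \ref{repeatingFracCor}, which is where the number-theoretic analysis of Section \ref{numTheorySection} does its work: without the continued fraction description of $\F E$ (in particular the fact that elements of $C$ have continued fraction tails containing only ones and twos), there would be no reason a priori that the special quadratic irrationals $(-3+\sqrt{D})/2$ with $D>5$ rational should avoid the generalized Cantor set. Granting Corollary \ref{repeatingFracCor}, the argument above is essentially a one-line case analysis, and the theorem follows.
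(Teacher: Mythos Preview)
Your proof is correct and follows essentially the same approach as the paper's own proof: reduce to the case $\gamma^{-1}(q)\in C$, solve the resulting quadratic to get $\mu=(-3+\sqrt{5+8q})/2$, and invoke Corollary \ref{repeatingFracCor} to obtain a contradiction. You are slightly more explicit than the paper in noting injectivity and that $\gamma$ carries rationals to rationals, but the substance is identical.
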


We warn the reader that the properties of Proposition \ref{gammaProp} alone are not sufficient to imply the theorem.  In fact, it is a priori possible that there exists some $q\in \Q_{\geq 0}$ such that $\gamma^{-1}(q)$ lies in the generalized Cantor set $C = \R \sm \bigcup_{\alpha} I_\alpha$.  See \cite{MO1} for a discussion of several counterexamples.
 
\begin{proof}
Let $q\in \Q_{\geq 0}$, and put $\xi = \gamma^{-1}(q)$.  If $\xi\in I_\alpha$ for some $\alpha\in \F E$, then since $\gamma$ is piecewise linear with rational coefficients on $I_\alpha$ we conclude that $\xi\in \Q$, and we are done.  If $\xi$ lies in no $I_\alpha$, then we have $$q=\gamma(\xi) = P(\xi) - \delta(\xi) = P(\xi) - \frac{1}{2} = \frac{1}{2}(1+3\xi+\xi^2),$$ and thus $$\xi=\frac{-3+\sqrt{5+8q}}{2}.$$ Since $q\geq 0$, this contradicts Corollary \ref{repeatingFracCor} unless $q=0$; in case $q=0$ we trivially have $\xi=0$, so we are done.
\end{proof}

\begin{remark}\label{xiRemark}
In fact, keeping notation from the proof of the theorem, it is easy to show that if $\alpha$ is the associated exceptional slope to $\gamma^{-1}(q)$ then $\frac{1}{2}(-3+\sqrt{5 +8q})\in I_\alpha$.  Once the associated exceptional slope to $\gamma^{-1}(q)$ is known, it is easy to determine $\gamma^{-1}(q)$ precisely via the formulas in the proof of Proposition \ref{gammaProp}.  Thus a fast method for determining $\gamma^{-1}(q)$ is to first find the interval $I_\alpha$ in which $\frac{1}{2}(-3+\sqrt{5 +8q})$ lies.
\end{remark}

\begin{corollary}\label{nonExceptionalMinCor}
For every $q\in \Q_{\geq 0}$, the set of nonnegative slopes of non-exceptional stable bundles $V$ satisfying $\chi(V)/\rk(V) \geq q$ has a minimum, namely $\gamma^{-1}(q)$.   Furthermore, any such $V$ of minimum possible slope has $\chi(V)/\rk(V) = q$.
\end{corollary}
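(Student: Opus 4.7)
The plan is to deduce the corollary by combining the bijectivity of $\gamma:\Q_{\geq 0}\to\Q_{\geq 0}$ (the immediately preceding theorem) with Drezet's existence theorem (Theorem~\ref{deltaClassification}) and the strict monotonicity of $\gamma$ from Proposition~\ref{gammaProp}.

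First I would record the key reformulation already implicit in the introductory paragraph of Section~\ref{gammaSection}: a non-exceptional stable bundle $V$ of slope $\mu\in\Q_{\geq 0}$ and discriminant $\Delta$ exists if and only if $\Delta\geq\delta(\mu)$, and in that case $\chi(V)/\rk(V)=P(\mu)-\Delta\leq P(\mu)-\delta(\mu)=\gamma(\mu)$. Thus $\gamma(\mu)$ is precisely the supremum of $\chi/\rk$ among non-exceptional stable bundles of slope $\mu$.

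Next I would prove the lower bound: if $V$ is non-exceptional stable with $\mu(V)\geq 0$ and $\chi(V)/\rk(V)\geq q$, then $\gamma(\mu(V))\geq q$, so by strict monotonicity of $\gamma$ we get $\mu(V)\geq\gamma^{-1}(q)$. This already shows no slope strictly smaller than $\gamma^{-1}(q)$ can occur, and gives the "furthermore" clause at once: if $\mu(V)=\gamma^{-1}(q)$, then $q\leq \chi(V)/\rk(V)\leq \gamma(\mu(V))=q$, forcing equality.

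It remains to exhibit a non-exceptional stable bundle attaining $\mu=\gamma^{-1}(q)$. By the previous theorem, $\mu=\gamma^{-1}(q)$ lies in $\Q_{\geq 0}$, so I may set $\Delta=P(\mu)-q\in\Q$, which by construction equals $\delta(\mu)$ and in particular satisfies $\Delta\geq\delta(\mu)$. Choose a positive integer $r$ sufficiently divisible so that $r\mu$ and $r(P(\mu)-\Delta)=rq$ are integers; by further enlarging $r$ if necessary I may assume the triple $(r,\mu,\Delta)$ is not the numerical invariant of any semi-exceptional bundle (the semi-exceptional invariants form a discrete set of ranks for fixed $\mu$, $\Delta$). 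Theorem~\ref{deltaClassification} then provides a stable sheaf with these invariants, and by construction it is non-exceptional and satisfies $\chi(V)/\rk(V)=q$.

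The only mild subtlety is the choice of $r$ in the existence step --- one must verify that the semi-exceptional locus does not exhaust all admissible ranks --- but this is immediate from the fact that semi-exceptional bundles have $\Delta<1/2$ of a very specific form, while our $\Delta=\delta(\mu)\geq 1/2$ with equality only when $\mu\in C$, in which case the associated discriminant is not of semi-exceptional type either. Everything else is a direct application of results already in hand.
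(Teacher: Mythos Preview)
Your proof is correct and follows essentially the same route as the paper, which treats the corollary as immediate from the preceding bijectivity theorem together with the discussion before it (that $\gamma(\mu)$ is the maximum of $\chi/\rk$ over non-exceptional stable bundles of slope $\mu$, so such a bundle with $\chi/\rk\geq q$ exists iff $\mu\geq\gamma^{-1}(q)$).

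One remark: your final paragraph about avoiding semi-exceptional invariants is more delicate than necessary. Since $\mu=\gamma^{-1}(q)$ is rational, it lies in some $I_\alpha$, and Proposition~\ref{deltaContinuous} gives $\delta(\mu)>\tfrac{1}{2}$ strictly. Any semi-exceptional bundle has $\Delta<\tfrac{1}{2}$, so $\Delta=\delta(\mu)$ is automatically never a semi-exceptional discriminant; there is no need to enlarge $r$ or to contemplate the case $\mu\in C$ (which cannot occur for rational $\mu$).
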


We now turn to including the exceptional bundles into the discussion.  These bundles have an unusually large ratio $\chi_\alpha/r_\alpha$ for their slopes, so they require special attention.  We first collect some more necessary facts about the exceptional slopes in the following easy lemma.

\begin{lemma}\label{exceptionalIneqLemma}
Let $\alpha\in \F E$ be nonnegative, and let $n$ be a nonnegative integer.
\begin{enumerate} \item We have $$\gamma(\alpha)<\frac{\chi_\alpha}{r_\alpha} < \gamma(\alpha+x_\alpha).$$ \item If $\gamma(\alpha) <n$ then $\chi_\alpha/r_\alpha \leq n.$ 
\item It is never the case that $\gamma(\alpha) = n$ unless $\alpha$ is an integer; in this case $$n = \frac{(\alpha+2)(\alpha+1)}{2}-1.$$
\end{enumerate}
\end{lemma}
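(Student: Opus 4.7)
The plan is to reduce all three parts to the single clean identity $\chi_\alpha/r_\alpha - \gamma(\alpha) = 1/r_\alpha^2$, which I would extract directly from the explicit formula for $\gamma$ on $I_\alpha$ recorded in the proof of Proposition \ref{gammaProp}. Indeed, $\gamma(\alpha) = P(\alpha) - \delta(\alpha) = P(\alpha) - (P(0) - \Delta_\alpha) = P(\alpha) - 1 + \Delta_\alpha$, whereas $\chi_\alpha/r_\alpha = P(\alpha) - \Delta_\alpha$. Subtracting gives $\chi_\alpha/r_\alpha - \gamma(\alpha) = 1 - 2\Delta_\alpha = 1/r_\alpha^2 > 0$, which already proves the left inequality of (1).

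For the right inequality of (1), I would compute $\gamma(\alpha + x_\alpha)$ by continuity: since $\delta(\mu) \to 1/2$ as $\mu \to (\alpha + x_\alpha)^-$, one has $\gamma(\alpha + x_\alpha) = P(\alpha + x_\alpha) - 1/2$. After unwinding, the desired inequality $\gamma(\alpha + x_\alpha) > \chi_\alpha/r_\alpha$ reduces to $(\alpha+3)x_\alpha > 1/r_\alpha^2$. The defining quadratic $P(-x_\alpha) - \Delta_\alpha = 1/2$ expands to $x_\alpha(3 - x_\alpha) = 1/r_\alpha^2$, so the inequality becomes $(\alpha + 3)x_\alpha > x_\alpha(3 - x_\alpha)$, which simplifies to $\alpha + x_\alpha > 0$ and is automatic since $\alpha \geq 0$ and $x_\alpha > 0$.

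For (2), the identity from the first paragraph rewrites the hypothesis $\gamma(\alpha) < n$ as $\chi_\alpha/r_\alpha < n + 1/r_\alpha^2$, equivalently $\chi_\alpha < n r_\alpha + 1/r_\alpha$. Since both $\chi_\alpha$ and $n r_\alpha$ are integers and $1/r_\alpha \leq 1$, this forces $\chi_\alpha \leq n r_\alpha$, i.e. $\chi_\alpha/r_\alpha \leq n$. For (3), if $\gamma(\alpha) = n$ with $n$ an integer, the same identity yields $\chi_\alpha = n r_\alpha + 1/r_\alpha$, and integrality of $\chi_\alpha$ forces $r_\alpha \mid 1$, hence $r_\alpha = 1$ and $\alpha \in \Z$. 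Substituting into $\gamma(\alpha) = P(\alpha) - 1 + \Delta_\alpha$ with $\Delta_\alpha = 0$ gives $n = P(\alpha) - 1 = (\alpha+2)(\alpha+1)/2 - 1$.

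There is no real obstacle here: once the identity $\chi_\alpha/r_\alpha - \gamma(\alpha) = 1/r_\alpha^2$ is isolated, each part is a one-line integrality or bookkeeping argument. The only care needed is the verification $(\alpha+3)x_\alpha > 1/r_\alpha^2$ in (1), which is where the quadratic satisfied by $x_\alpha$ must be invoked.
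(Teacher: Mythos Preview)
Your argument is correct throughout, and for parts (1) and (2) it is essentially the paper's own proof: both hinge on the identity $\chi_\alpha/r_\alpha-\gamma(\alpha)=1/r_\alpha^2$, and both reduce the right-hand inequality in (1) to $(\alpha+3)x_\alpha>1/r_\alpha^2$ via the linearity of $\gamma$ on $[\alpha,\alpha+x_\alpha)$. (The paper then proves the slightly stronger $3x_\alpha>1/r_\alpha^2$ by noting $3x_\alpha=x_\alpha(3-x_\alpha)+x_\alpha^2=1/r_\alpha^2+x_\alpha^2$, whereas you divide through by $x_\alpha$ using the same quadratic relation; these are interchangeable.)

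For part (3), your argument is genuinely simpler than the paper's. The paper expresses $\gamma(\alpha)$ as a fraction with denominator $2r_\alpha^2$ and analyzes the numerator modulo $r_\alpha$ and modulo $4$, invoking the continued-fraction congruence $(\alpha r_\alpha)^2\equiv -1\pmod{r_\alpha}$ from Corollary~\ref{congruentCor} to rule out $r_\alpha\geq 3$, and then handles $r_\alpha=2$ separately. Your route---rewriting $\gamma(\alpha)=n$ as $\chi_\alpha=nr_\alpha+1/r_\alpha$ and observing that integrality of $\chi_\alpha$ forces $r_\alpha=1$---bypasses Corollary~\ref{congruentCor} entirely and handles all $r_\alpha>1$ in one stroke. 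The paper's argument has the minor advantage of computing $2r_\alpha^2\gamma(\alpha)\pmod{r_\alpha}$ explicitly, which could in principle say more, but for the statement at hand your approach is cleaner.
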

\begin{proof}
(1) Note that $$\frac{\chi_\alpha}{r_\alpha}-\gamma(\alpha)=\frac{1}{r_\alpha^2}.$$ The derivative of the function $\gamma(\mu)$ on the interval $[\alpha,\alpha+x_\alpha)$ is $\alpha+3$, so it suffices to show $$\frac{1}{r_\alpha^2} < x_\alpha(\alpha+3).$$  In fact, even $$\frac{1}{r_\alpha^2}< 3x_\alpha$$ is true, and is easily verified by basic algebra.  

(2)  Assume that $\chi_\alpha/r_\alpha>n$.  Since $\chi_\alpha$ is an integer, we must actually have $$\frac{\chi_\alpha}{r_\alpha} \geq n + \frac{1}{r_\alpha}.$$ But then $$\gamma(\alpha) = \frac{\chi_\alpha}{r_\alpha}-\frac{1}{r_\alpha^2} \geq n$$ since $r_\alpha\geq 1$.

(3)  We compute $$\gamma(\alpha) = \frac{-1+r_\alpha^2+3(\alpha r_\alpha)r_\alpha+\alpha^2 r_\alpha^2}{2r_\alpha^2}.$$ The numerator of this expression is congruent to $-1+\alpha^2 r_\alpha^2 \pmod{r_\alpha}$, so since $\alpha^2 r_\alpha^2 \equiv -1 \pmod{r_\alpha}$ by Corollary \ref{congruentCor} we see that $\gamma(\alpha)$ is not an integer, except possibly when $\alpha$ is an integer or half-integer.  However, when $r_\alpha = 2$ we compute that the numerator is congruent to $2$ mod $4$.  The expression for $\gamma(\alpha)$ when $\alpha$ is an integer is elementary.
\end{proof}

We now remove our ``non-exceptional" hypothesis from Corollary \ref{nonExceptionalMinCor}.

\begin{theorem}\label{minSlopeThm}
Let $q\in \Q_{\geq 0}$.  The set of nonnegative slopes of stable bundles $V$ with the property $\chi(V)/\rk(V) \geq q$ has a minimum $\mu$.  

In case $q=n$ is a positive integer, there exist stable $V$ of slope $\mu$ with $\chi(V)/\rk(V)=n$.  In fact, unless $n$ is of the form $(r+2)(r+1)/2-1$ for a positive integer $r$, no stable $V$ of slope $\mu$ has $\chi(V)/\rk(V) > n$.
\end{theorem}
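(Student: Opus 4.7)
The plan is to combine Corollary \ref{nonExceptionalMinCor}, which handles non-exceptional stable bundles, with the numerical inequalities of Lemma \ref{exceptionalIneqLemma}, to show that among all exceptional bundles only one candidate could possibly undercut the non-exceptional minimum $\nu := \gamma^{-1}(q)$: namely the exceptional bundle $E_{\alpha_0}$ attached to the associated exceptional slope $\alpha_0$ of $\nu$. The main conceptual step---and essentially the only one---is this reduction; everything else is bookkeeping using Lemma \ref{exceptionalIneqLemma}.

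For existence of the minimum, set $\nu = \gamma^{-1}(q)$; by Corollary \ref{nonExceptionalMinCor} this is the smallest slope attained by a non-exceptional stable bundle $V$ with $\chi(V)/\rk(V) \geq q$. Now suppose $\alpha \geq 0$ is an exceptional slope with $\chi_\alpha/r_\alpha \geq q$. Lemma \ref{exceptionalIneqLemma}(1) gives $\chi_\alpha/r_\alpha < \gamma(\alpha+x_\alpha)$, so $\gamma(\alpha+x_\alpha) > q = \gamma(\nu)$; strict monotonicity of $\gamma$ yields $\alpha + x_\alpha > \nu$. If in addition $\alpha < \nu$, we obtain $\nu \in (\alpha, \alpha+x_\alpha) \subset I_\alpha$, forcing $\alpha = \alpha_0$. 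Thus the minimum is $\mu = \min(\nu,\alpha_0)$, where $\alpha_0$ is a genuine competitor only when $\alpha_0 < \nu$ and $\chi_{\alpha_0}/r_{\alpha_0} \geq q$.

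Now assume $q = n$ is a positive integer. If $\mu = \alpha_0 < \nu$, then $\gamma(\alpha_0) < \gamma(\nu) = n$, so Lemma \ref{exceptionalIneqLemma}(2) gives $\chi_{\alpha_0}/r_{\alpha_0} \leq n$; combined with $\chi_{\alpha_0}/r_{\alpha_0} \geq n$ we obtain equality, and $V = E_{\alpha_0}$ realizes $\chi/\rk = n$. Any non-exceptional stable bundle of slope $\alpha_0$ satisfies $\chi/\rk \leq \gamma(\alpha_0) < n$, so no strict inequality is possible at this $\mu$. If instead $\mu = \nu$, Corollary \ref{nonExceptionalMinCor} directly supplies a non-exceptional stable bundle of slope $\nu$ with $\chi/\rk = n$, and every non-exceptional $V$ of slope $\nu$ satisfies $\chi/\rk \leq \gamma(\nu) = n$; the only way to achieve strict inequality is to take $V = E_\nu$ when $\nu$ is itself an exceptional slope with $\chi_\nu/r_\nu > n$. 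Since $\gamma(\nu) = n$ at this exceptional $\nu$, Lemma \ref{exceptionalIneqLemma}(3) forces $\nu$ to be an integer $r$ and $n = (r+2)(r+1)/2 - 1$; conversely, in that case $E_r = \OO_{\P^2}(r)$ satisfies $\chi = (r+1)(r+2)/2 = n+1 > n$, which is exactly the exceptional family allowed in the statement.
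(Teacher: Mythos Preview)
Your proof is correct and follows essentially the same approach as the paper's own argument: reduce to the non-exceptional minimum $\gamma^{-1}(q)$ via Corollary \ref{nonExceptionalMinCor}, then use Lemma \ref{exceptionalIneqLemma}(1) to see that the only exceptional bundle which could undercut it is $E_{\alpha_0}$, and finish with parts (2) and (3) of that lemma. The only cosmetic difference is that the paper argues directly that any exceptional slope $\beta<\alpha_0$ fails $\chi_\beta/r_\beta\geq q$, while you argue the contrapositive (any exceptional slope with $\chi_\alpha/r_\alpha\geq q$ and $\alpha<\nu$ must equal $\alpha_0$); the content is identical.
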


\begin{proof}
We already know from Corollary \ref{nonExceptionalMinCor} that the set of nonnegative slopes of non-exceptional stable bundles with the property $\chi(V)/\rk(V)\geq q$ has a minimum $\lambda = \gamma^{-1}(q)$.  Let $\alpha\in \F E$ be the associated exceptional slope to $\lambda$.  If $\beta\in \F E$ has  $0\leq \beta< \alpha$, then by Lemma \ref{exceptionalIneqLemma} (1) $$\frac{\chi_\beta}{r_\beta}<\gamma(\beta+x_\beta)<\gamma(\lambda)=q,$$ so the exceptional bundle $E_\beta$ does not have $\chi(E_\beta)/\rk(E_\beta) \geq q$.  On the other hand, it is possible that $\chi_\alpha/r_\alpha \geq q$ and $\alpha < \lambda$.  It follows that the minimum nonnegative slope $\mu$ of stable bundles with $\chi(V)/\rk(V)\geq q$ is either $\alpha$ or $\lambda$; at any rate, the minimum exists.

Now suppose $q=n$ is a positive integer.  With the notation of the previous paragraph, if $\mu = \lambda$ then it follows from Corollary \ref{nonExceptionalMinCor} that every non-exceptional stable bundle of slope $\mu$ with $\chi(V)/\rk(V)\geq n$ actually has $\chi(V)/\rk(V) = n$.  If in fact we have $\mu=\lambda=\alpha$, then we must have $\gamma(\alpha) = n$, so $n$ is of the form $(r+2)(r+1)/2-1$ by Lemma \ref{exceptionalIneqLemma} (3).  Finally, in case $\mu = \alpha\neq \lambda$ we see $\alpha <\lambda$, so $\gamma(\alpha)< n$.  By Lemma \ref{exceptionalIneqLemma} (2), we conclude $\chi_\alpha/r_\alpha \leq n$, and thus $\chi_\alpha/ r_\alpha = n$.
\end{proof}

At last, we can make precise our notion of the associated exceptional bundle to the ideal sheaf $I_Z$ of $n$ general points.

\begin{definition}
For an integer $n$, let $\mu$ be the minimum nonnegative slope of a stable bundle $V$ with $\chi(V)/\rk(V) =n$.  Let $\alpha\in \F E$ be the associated exceptional slope to $\mu$.  The \emph{associated exceptional slope} to  $\P^{2[n]}$ is $\alpha$.
\end{definition}

\section{Resolution of the ideal sheaf of $n$ points}\label{resolutionSection}

In this section we exhibit a particularly nice resolution of the ideal sheaf $I_Z$ of $n$ general points in $\P^2$.  The strategy will be to first describe the resolution for some $Z\in \P^{2[n]}$, then argue that in fact the general $Z$ has a resolution of this form as well.

Let $n\geq 1$ be fixed for this section, and until further notice 
$$\emph{assume that $n$ is not of the form ${r+2\choose 2}-1$}.$$ We will handle this very easy case later; while it is possible to handle it uniformly with the other cases, incorporating it into the main dialog makes things much more confusing due to its exceptional nature with respect to Theorem \ref{minSlopeThm}. 

We begin by letting $\mu$ be the minimum nonnegative slope of a stable bundle with the property $\chi/r \geq n$.  Consider the exceptional slope associated to $\mu$.  We may write it in the form $\alpha.\beta$ for some $\alpha,\beta\in \F E$ with $$\alpha = \varepsilon\left(\frac{p\vphantom1}{2^q}\right) \qquad \beta = \varepsilon\left(\frac{p+1}{2^q}\right).$$ The triples $(E_{\beta-3},E_{\alpha},E_{\alpha.\beta})$ and $(E_{\alpha.\beta}, E_\beta, E_{\alpha+3})$ are then both triads.  

\begin{lemma}\label{rankLemma}
Suppose $\mu\neq \alpha.\beta$.  (Since $n \neq {r+2\choose 2}-1$, this is equivalent to assuming $\gamma(\mu)=n$).
\begin{enumerate}
\item If $\mu \in (\alpha.\beta-x_{\alpha.\beta},\alpha.\beta)$, there exists a stable bundle $V$ of slope $\mu$ with $\chi(V)/\rk(V) = n$ and $\rk(V) = (\alpha.\beta)r_{\alpha.\beta}$.

\item If $\mu\in (\alpha.\beta,\alpha.\beta+x_{\alpha.\beta})$, there exists a stable bundle $V$ of slope $\mu$ with $\chi(V)/\rk(V) = n$ and $\rk(V) = (\alpha.\beta+3)r_{\alpha.\beta}.$
\end{enumerate}
\end{lemma}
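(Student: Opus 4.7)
The plan is to construct $V$ directly, as the cokernel (case 1) or kernel (case 2) of a general map between semi-exceptional bundles drawn from a triad containing $E_{\alpha.\beta}$.

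For case (1), since $\mu \in (\alpha.\beta - x_{\alpha.\beta}, \alpha.\beta)$ the exceptional slope associated to $\mu$ is $\alpha.\beta$, so following the height-zero discussion of Section \ref{triadSection} I would work with the triad $(E_{\beta-3}, E_\alpha, E_{\alpha.\beta})$ (in which $E_{\alpha.\beta}$ plays the role of $F$) and seek $V$ of the form
$$0 \to E_{\beta-3}^{m_1} \to E_\alpha^{m_2} \to V \to 0.$$
Imposing $c_1(V)/\rk V = \mu$ and $\chi(V)/\rk V = n$ gives two linear equations in $(m_1,m_2)$ that determine their ratio; upon clearing denominators, the rank $\rk V = m_2 r_\alpha - m_1 r_\beta$ simplifies to $(\alpha.\beta)\, r_{\alpha.\beta}$ via the identities of Lemma \ref{numericalProps}, notably $r_{\alpha.\beta} = r_\alpha r_\beta(3-\alpha+\beta)$ together with the formulas for $\alpha.\beta - \alpha$ and $\beta - \alpha.\beta$. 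To actually realize such a $V$, I would take a general map $\phi\colon E_{\beta-3}^{m_1} \to E_\alpha^{m_2}$; by global generation of $\sHom(E_{\beta-3},E_\alpha)$ (standard for the left two bundles of a triad, via the vanishing Theorem \ref{excepOrthogonalThm}) together with Proposition \ref{bertiniProp}, the general $\phi$ is everywhere injective, so $V = \coker(\phi)$ is a vector bundle. Its discriminant satisfies $\Delta(V) = P(\mu)-n = \delta(\mu)$, so Theorem \ref{deltaClassification} makes $M(\rk V, \mu, \delta(\mu))$ a nonempty irreducible moduli space, and a general such cokernel lies in the stable locus.

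Case (2) proceeds symmetrically, using the triad $(E_{\alpha.\beta}, E_\beta, E_{\alpha+3})$ with $E_{\alpha.\beta}$ now playing the role of $E$: I set $V$ to be the kernel of a general surjection $E_\beta^{m_2} \twoheadrightarrow E_{\alpha+3}^{m_1}$. Using $r_{\alpha+3} = r_\alpha$, an analogous computation yields $\rk V = m_2 r_\beta - m_1 r_\alpha = (\alpha.\beta + 3)\, r_{\alpha.\beta}$.

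The main obstacle is the numerical verification of the rank formula. The two linear equations express $(m_1,m_2)$ in terms of $\mu,n,\alpha,\beta$ as somewhat opaque expressions, and reducing $m_2 r_\alpha - m_1 r_\beta$ (respectively $m_2 r_\beta - m_1 r_\alpha$ in case 2) to the clean form $(\alpha.\beta)\, r_{\alpha.\beta}$ (respectively $(\alpha.\beta+3)\, r_{\alpha.\beta}$) requires systematic use of the interlocking identities in Lemma \ref{numericalProps}. The construction and stability steps, by contrast, are relatively standard applications of the Bertini-type Proposition \ref{bertiniProp} together with Drezet's classification.
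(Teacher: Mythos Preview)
Your approach is workable in outline but considerably more elaborate than the paper's, and it leaves the real content of the lemma unaddressed.

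The paper's proof is a two-line integrality check: on $(\alpha.\beta - x_{\alpha.\beta}, \alpha.\beta]$ the formula $\gamma(\mu) = (\alpha.\beta)(\mu+3) + 1 + \Delta_{\alpha.\beta} - P(\alpha.\beta) = n$ can be solved explicitly for $\mu$, and one reads off directly that $(\alpha.\beta)r_{\alpha.\beta}\,\mu \in \Z$. With $r = (\alpha.\beta)r_{\alpha.\beta}$ and $\Delta = \delta(\mu)$, the hypotheses of Theorem \ref{deltaClassification} are then met (the remaining condition $r(P(\mu)-\Delta) = rn \in \Z$ being trivial), and Drezet's theorem supplies the stable $V$ immediately. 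No resolution is built at this stage; that is deferred to Proposition \ref{orthoResProp}.

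Your route essentially folds Proposition \ref{orthoResProp} into the lemma. A few points:
\begin{itemize}
\item The two conditions you impose on $(m_1,m_2)$ are not independent. For any cokernel of a map $E_{\beta-3}^{m_1}\to E_\alpha^{m_2}$ in the triad $(E_{\beta-3},E_\alpha,E_{\alpha.\beta})$ one has $\chi(E_{\alpha.\beta},V)=0$ automatically (acyclicity of $E_{\alpha.\beta}^*\otimes E_{\beta-3}$ and $E_{\alpha.\beta}^*\otimes E_\alpha$), which forces $\Delta(V)=\delta(\mu)$ and hence $\chi(V)/\rk V = \gamma(\mu)=n$ as soon as $\mu(V)=\mu$. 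So you have only one homogeneous relation, pinning down the ratio $m_1/m_2$.
\item Having the ratio, you still need a specific \emph{integer} solution $(m_1,m_2)$ giving $\rk V = (\alpha.\beta)r_{\alpha.\beta}$. ``Clearing denominators'' is where the work hides: verifying that, say, $m_1 = r_\alpha(\mu-\alpha)(\alpha.\beta)$ and $m_2 = r_\beta(\mu-\beta+3)(\alpha.\beta)$ are integers is the same arithmetic content as the paper's direct check that $(\alpha.\beta)r_{\alpha.\beta}\,\mu\in\Z$, and you have not carried it out.
\item Your stability step ultimately appeals to Theorem \ref{deltaClassification} anyway, which is what the paper uses from the start. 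The assertion that ``a general such cokernel lies in the stable locus'' also needs justification (either the height-zero correspondence of Section \ref{triadSection} or Brambilla's results as in the proof of Theorem \ref{resTheorem}).
\end{itemize}

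In short, your construction anticipates what the paper does next, but for the lemma itself the paper's route is both shorter and more direct, and the integrality of $r\mu$ --- the one substantive point --- is precisely what you gloss over.
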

\begin{proof}
Suppose $\mu\in (\alpha.\beta-x_{\alpha.\beta},\alpha.\beta)$.  By Proposition \ref{gammaProp}, we compute $$\gamma(\mu) = (\alpha.\beta)(\mu+3)+1+\Delta_{\alpha.\beta}-P(\alpha.\beta)=n,$$ so $$\mu = \frac{n-1+P(\alpha.\beta)-\Delta_{\alpha.\beta}}{\alpha.\beta}-3=\frac{(n-1) r_{\alpha.\beta}+\chi_{\alpha.\beta}}{(\alpha.\beta)r_{\alpha.\beta}}-3,$$ and it follows that $\mu(\alpha.\beta)r_{\alpha.\beta}$ is an integer.  We conclude by Theorem \ref{deltaClassification} that the necessary $V$ exists.  The argument when $\mu\in (\alpha.\beta,\alpha.\beta+x_{\alpha.\beta})$ is analogous.
\end{proof}

\begin{definition}
In case $\mu\neq \alpha.\beta$, any bundle provided by Lemma \ref{rankLemma} is called an \emph{associated orthogonal bundle} to the general ideal sheaf $I_Z$ of $n$ points.  When $\gamma(\mu)<n$, we have $\chi_{\alpha.\beta}/r_{\alpha.\beta}=n$, and the exceptional bundle $E_{\alpha.\beta}$ is called the associated orthogonal bundle.
\end{definition}

We denote by $V$ an associated orthogonal bundle to $I_Z$.  The terminology comes from the numerical expectation that if $I_Z$ is general then $H^i(V \te I_Z) = 0$ for all $i$.  Suppose $V$ is not exceptional.  In this case, we have $$P(\mu)-\delta(\mu) = \gamma(\mu) = n = \frac{\chi(V)}{\rk(V)} = P(\mu)-\Delta(V) ,$$ so $\Delta(V) = \delta(\mu)$ and $V$ has height zero.  Thus $V$ admits a nice resolution by semi-exceptional bundles.

\begin{proposition}\label{orthoResProp}
If $V$ is an associated orthogonal bundle, one of the following three possibilities holds.
\begin{enumerate}
\item $\mu = \alpha.\beta$ and $V$ is exceptional.
\item $\mu< \alpha.\beta$, and $\rk(V) = (\alpha.\beta)r_{\alpha.\beta}$.  In this case, $V$ admits a resolution $$0 \to E_{\beta-3}^{m_1} \to E_{\alpha}^{m_2}\to V \to 0,$$ where $m_1 = r_\alpha(\mu-\alpha)(\alpha.\beta)$ and $m_2=r_\beta(\mu-\beta+3)(\alpha.\beta)$.  In particular, the numbers $r_\alpha(\mu-\alpha)(\alpha.\beta)$ and $r_\beta(\mu-\beta+3)(\alpha.\beta)$ are positive integers.  Furthermore, the inequalities $$r_{\alpha.\beta} x_{\alpha.\beta} < \frac{m_1}{m_2}\leq \frac{r_{\alpha.\beta}}{3 r_\beta r_{\alpha.\beta}-r_\alpha}$$ hold.
\item $\mu> \alpha.\beta$, and $\rk(V) = (\alpha.\beta+3)r_{\alpha.\beta}$.    In this case, $V$ admits a resolution $$0\to V \to E_\beta^{m_2} \to E_{\alpha+3}^{m_1} \to 0$$ with $m_2=r_\alpha(3+\alpha-\mu)(\alpha.\beta+3)$ and $m_1=r_{\beta}(\beta-\mu)(\alpha.\beta+3)$, and the inequalities $$r_{\alpha.\beta} x_{\alpha.\beta} < \frac{m_1}{m_2} \leq \frac{ r_{\alpha.\beta}}{3r_\alpha r_{\alpha.\beta}-r_\beta}$$ hold.
\end{enumerate}
\end{proposition}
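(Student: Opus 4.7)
The three cases are distinguished by the position of $\mu$ relative to $\alpha.\beta$. Case (1) is immediate: under our standing assumption that $n\neq{r+2\choose 2}-1$, Lemma \ref{exceptionalIneqLemma}(3) excludes $\gamma(\alpha.\beta)=n$, so $\mu=\alpha.\beta$ forces $\gamma(\mu)<n$, and by the definition preceding the proposition the associated orthogonal bundle is $V=E_{\alpha.\beta}$, which is exceptional. I will treat case (2) in detail and remark on case (3) at the end.

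For case (2), let $V$ be a bundle produced by Lemma \ref{rankLemma}(1). Combining $\chi(V)/\rk(V)=n=\gamma(\mu)=P(\mu)-\delta(\mu)$ with the Riemann--Roch expression $\chi(V)/\rk(V)=P(\mu)-\Delta(V)$ gives $\Delta(V)=\delta(\mu)$, so $V$ has height zero. Since $\alpha.\beta$ is the exceptional slope associated to $\mu$ and $\mu\leq\alpha.\beta$, the appropriate triad for the height-zero analysis of Section \ref{triadSection} is $(E,G,F)=(E_{\beta-3},E_\alpha,E_{\alpha.\beta})$, placing $F$ equal to the associated exceptional bundle. The vanishings $\Hom(E_{\alpha.\beta},V)=0$ and $\Hom(V,E_{\beta-3})=0$ required to collapse the Beilinson-type complex follow from the slope chain $\beta-3<\alpha<\mu<\alpha.\beta$ (the leftmost inequality is part of the triad condition; the middle inequality uses disjointness of $I_\alpha$ and $I_{\alpha.\beta}$) together with stability of $V$, which is non-exceptional in this case. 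The result is the short exact sequence
$$0\to E_{\beta-3}^{m_1}\to E_\alpha^{m_2}\to V\to 0.$$

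To identify $m_1$ and $m_2$, I match ranks and first Chern classes to obtain the linear system
$$m_2r_\alpha-m_1r_\beta=(\alpha.\beta)r_{\alpha.\beta},\qquad m_2r_\alpha\alpha-m_1r_\beta(\beta-3)=\mu(\alpha.\beta)r_{\alpha.\beta}.$$
Elimination, together with the identity for $r_{\alpha.\beta}$ provided by Lemma \ref{numericalProps}(2), yields the asserted values $m_1=r_\alpha(\mu-\alpha)(\alpha.\beta)$ and $m_2=r_\beta(\mu-\beta+3)(\alpha.\beta)$. Positivity of these integers is immediate from the slope chain above, while integrality is forced by the existence of the resolution. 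The inequalities $r_{\alpha.\beta}x_{\alpha.\beta}<m_1/m_2\leq r_{\alpha.\beta}/(3r_\beta r_{\alpha.\beta}-r_\alpha)$ are then exactly the inequalities (\ref{slopeIneq}) of Section \ref{triadSection} applied to the present triad, once one notes $\rk S=3\rk(E_{\beta-3})\rk(E_{\alpha.\beta})-\rk(E_\alpha)=3r_\beta r_{\alpha.\beta}-r_\alpha$ for the exceptional bundle appearing there.

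Case (3) is handled in perfectly parallel fashion using the triad $(E_{\alpha.\beta},E_\beta,E_{\alpha+3})$ and the dual short exact sequence from Section \ref{triadSection}, which now places $V$ in the leftmost position. The main obstacle throughout is purely bookkeeping: one must apply the identities of Lemma \ref{numericalProps} with some care so that the coefficients $(\alpha.\beta)$ in case (2) and $(\alpha.\beta+3)$ in case (3) emerge as clean multiplicative factors in the formulas for $m_1$ and $m_2$, rather than as tangled expressions involving $r_{\alpha.\beta}$. No genuine conceptual difficulty arises beyond this arithmetic.
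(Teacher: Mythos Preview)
Your proof is correct and follows essentially the same route as the paper: establish that $V$ has height zero, invoke the triad $(E_{\beta-3},E_\alpha,E_{\alpha.\beta})$ from Section~\ref{triadSection} to obtain the resolution, solve the linear system coming from rank and $c_1$ using $r_{\alpha.\beta}=r_\alpha r_\beta(3+\alpha-\beta)$, and read off the inequalities from~(\ref{slopeIneq}) after identifying $\rk S$. The paper is terser---it computes $m_1$ by equating slopes rather than writing out both linear equations, and it omits any discussion of case~(1)---but the substance is identical.
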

\begin{proof}
Suppose we are in case (2), so that $\mu< \alpha.\beta$.  Since $V$ has height zero, associated exceptional slope $\alpha.\beta$,  and $(E_{\beta-3},E_\alpha,E_{\alpha.\beta})$ is a triad, there exists a resolution of the form $$0\to E_{\beta-3}^{m_1}\to E_{\alpha}^{m_2}\to V\to 0.$$  It follows that $$\alpha = \mu(E^{m_2}_\alpha) = \frac{c_1(E_{\beta-3}^{m_1})+c_1(V)}{ \rk(E_{\beta-3}^{m_1})+\rk(V)}=\frac{m_1(\beta-3)r_\beta+\mu (\alpha.\beta)r_{\alpha.\beta}}{m_1r_\beta+(\alpha.\beta)r_{\alpha.\beta}},$$ and by basic algebra $$m_1 = \frac{r_{\alpha.\beta}(\mu-\alpha)(\alpha.\beta)}{r_\beta(3+\alpha-\beta)}=r_\alpha(\mu-\alpha)(\alpha.\beta),$$ where we made use of the identity $r_{\alpha.\beta} = r_\alpha r_\beta (3+\alpha-\beta)$.  Comparing ranks, we have $$m_2r_\alpha=m_1r_\beta +(\alpha.\beta)r_{\alpha.\beta},$$ and using $r_{\alpha.\beta} = r_\alpha r_\beta (3+\alpha-\beta)$ again we conclude $m_2 = r_\beta(\mu-\beta+3)(\alpha.\beta)$.  The inequalities on $m_1/m_2$ follow immediately from Section \ref{triadSection}.

In case (3) we use the triad $(E_{\alpha.\beta},E_\beta,E_{\alpha+3})$ and perform an identical calculation.
\end{proof}

The discussion of Section \ref{triadSection} shows that the prior resolution of $V$ is canonical, and that the numbers $m_1$ and $m_2$ are suitable Euler characteristics.

\subsection{The resolution of $I_Z$ for $\mu < \alpha.\beta$} At this point we ``guess'' a resolution of an ideal sheaf $I_Z$.  First let us suppose $\mu< \alpha.\beta$ and $\rk(V) = (\alpha.\beta)r_{\alpha.\beta}$. Put $m_1 = r_\alpha(\mu-\alpha)(\alpha.\beta)$ and $m_2=r_\beta(\mu-\beta+3)(\alpha.\beta)$, so that we have a resolution $$0\to E_{\beta-3}^{m_1} \to E_{\alpha}^{m_2}\to V \to 0.$$  
From the inequality $$\frac{m_1}{m_2} > r_{\alpha.\beta}x_{\alpha.\beta}$$ we find $3r_{\alpha.\beta}m_1>m_2$ (proving this reduces to the inequality $3x_{\alpha.\beta}r_{\alpha.\beta}^2>1$ already considered in the proof of Lemma \ref{exceptionalIneqLemma} (1)).

Thus we may consider a coherent sheaf $W$ defined by the exact sequence $$0\to W \to E_{-\alpha-3}^{m_1} \fto{\phi} E_{-\beta}^{3r_{\alpha.\beta} m_1-m_2}\to 0$$ where the map $\phi$ between semi-exceptional bundles is general.  To ensure that $\phi$ is surjective, we restrict our attention to the case  where the ``expected rank" $m_1r_\alpha - (3r_{\alpha.\beta}m_1-m_2)r_\beta$ of $W$ is at least $2$; we will see later that this is not very restrictive at all, and will handle the other cases separately.  Despite the form of its resolution, we warn that $W$ is \emph{not} typically a height zero bundle.  By Proposition \ref{bertiniProp}, if we show $\sHom(E_{-\alpha-3},E_{-\beta})$ is globally generated, the hypothesis that the expected rank of $W$ is at least $2$ implies $\phi$ is surjective and $W$ is locally free of the expected rank.

\begin{lemma}
Let $\alpha,\beta$ be exceptional slopes of the form $$\alpha = \varepsilon\left(\frac{p\vphantom1}{2^q}\right) \qquad \beta = \varepsilon\left(\frac{p+1}{2^q}\right).$$ Then the sheaf $\sHom(E_{\beta-3},E_{\alpha})$ is globally generated.
\end{lemma}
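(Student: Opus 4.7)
My plan is to induct on the nonnegative integer $q$ in the hypothesis $\alpha=\varepsilon(p/2^q)$, $\beta=\varepsilon((p+1)/2^q)$. Translating by an integer preserves $\sHom(E_{\beta-3},E_\alpha)$ up to isomorphism, so I may assume $0\leq p\leq 2^q-1$. For the base case $q=0$, the slopes $\alpha=p$ and $\beta=p+1$ are consecutive integers, so $E_\alpha=\OO(p)$ and $E_{\beta-3}=\OO(p-2)$ are line bundles and $\sHom(E_{\beta-3},E_\alpha)\cong\OO_{\P^2}(2)$ is visibly globally generated.

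For the inductive step, exactly one of $p$, $p+1$ is odd, so the recursive definition of $\varepsilon$ exhibits $(\alpha,\beta)$ as arising by dyadic subdivision from a level-$(q-1)$ pair $(\alpha',\beta')=(\varepsilon(k/2^{q-1}),\varepsilon((k+1)/2^{q-1}))$: either $(\alpha,\beta)=(\alpha',\alpha'.\beta')$ when $p$ is even, or $(\alpha,\beta)=(\alpha'.\beta',\beta')$ when $p$ is odd. Set $\gamma=\alpha'.\beta'$ and consider the triad $(E_{\alpha'},E_\gamma,E_{\beta'})$ from Section \ref{triadSection}, which yields the two canonical exact sequences
\begin{equation*}
0\to S(-3)\to E_{\alpha'}^{3r_{\beta'}}\to E_\gamma\to 0,\qquad 0\to E_\gamma\to E_{\beta'}^{3r_{\alpha'}}\to S\to 0,
\end{equation*}
with $S$ an exceptional (hence locally free) bundle. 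In the first subcase I will twist the second sequence by $\OO(-3)$, dualize (exact since all terms are locally free), and tensor with $E_{\alpha'}=E_\alpha$ to obtain a surjection $\sHom(E_{\beta'-3},E_{\alpha'})^{3r_{\alpha'}}\twoheadrightarrow\sHom(E_{\beta-3},E_\alpha)$. In the second subcase I will tensor the first sequence directly with $E_{\beta'-3}^*=E_{\beta-3}^*$ to obtain a surjection $\sHom(E_{\beta'-3},E_{\alpha'})^{3r_{\beta'}}\twoheadrightarrow\sHom(E_{\beta-3},E_\alpha)$.

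By the inductive hypothesis applied to the level-$(q-1)$ pair $(\alpha',\beta')$, the source of each surjection is globally generated, and a quotient of a globally generated sheaf is again globally generated (the evaluation map factors as $H^0(\mathcal{F})\otimes\OO\to H^0(\mathcal{G})\otimes\OO\to\mathcal{G}$ via the induced map on $H^0$, and pointwise surjectivity is preserved). This completes the induction. The only real content is the observation that the canonical triad sequences allow one to resolve whichever of $E_\alpha$, $E_\beta$ was newly introduced at level $q$ in terms of level-$(q-1)$ exceptional bundles; the main obstacle I anticipate is nothing more than the parity bookkeeping needed to match the two subcases to the correct half of the triad, with no Bertini or stability input required beyond what is already built into Section \ref{triadSection}.
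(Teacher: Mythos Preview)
Your proposal is correct and follows essentially the same inductive strategy as the paper: induct on $q$, use the triad exact sequences to exhibit $\sHom(E_{\beta-3},E_\alpha)$ as a quotient of copies of the level-$(q-1)$ sheaf $\sHom(E_{\beta'-3},E_{\alpha'})$, and conclude by the inductive hypothesis. The only cosmetic differences are that the paper starts the induction at $q=-1$ (where $\beta-\alpha=2$ and the sheaf is $\OO_{\P^2}(1)$) rather than $q=0$, and for the even-$p$ case the paper invokes the duality $\sHom(E_{-\alpha-3},E_{-\beta})\cong\sHom(E_{\beta-3},E_\alpha)$ to reduce to the odd case rather than, as you do, handling it directly via the second triad sequence twisted and dualized.
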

\begin{proof}
We induct on $q$, starting with $q=-1$ as a base case.  Clearly if $\alpha = k$ and $\beta = k+2$ for some integer $k$ then $\sHom(E_{\beta-3},E_\alpha) \cong \OO_{\P^2}(1)$ is globally generated.

So suppose $q\geq 0$, and first assume $p$ is odd.  Putting $\eta = \varepsilon((p-1)/2^q)$, the induction hypothesis shows $\sHom(E_{\beta-3},E_\eta)$ is globally generated.  We have $\alpha = \eta.\beta$, so $(E_\eta,E_\alpha,E_\beta)$ is a triad.  The canonical map $$E_{\eta}\te \Hom(E_{\eta},E_{\alpha})\to E_{\alpha}\to 0$$ is surjective, so $$\sHom(E_{\beta-3},E_\eta)\te \Hom(E_\eta,E_\alpha)\to \sHom(E_{\beta-3},E_\alpha)\to 0$$ is also surjective.  Thus $\sHom(E_{\beta-3},E_\alpha)$ is a quotient of a globally generated bundle, and it is globally generated.

When $p$ is even, one reduces to the odd case by replacing $(\alpha,\beta)$ with $(-\beta,-\alpha)$ and then noting $\sHom(E_{-\alpha-3},E_{-\beta})\cong \sHom(E_{\beta-3},E_{\alpha})$.
\end{proof}

Next, put $m_3 = r_{\alpha.\beta}(P(\alpha.\beta)-\Delta_{\alpha.\beta}-n) = \chi_{\alpha.\beta}-r_{\alpha.\beta}n=\chi(E_{\alpha.\beta}\te I_Z)$ (which is clearly a positive integer) and consider a general map $$W \fto{\psi} E_{-(\alpha.\beta)}^{m_3}.$$ We will see in a moment by a purely numerical calculation that the expected rank of $W$ equals $\rk(E^{m_3}_{-(\alpha.\beta)})-1$, so our hypothesis on the expected rank of $W$ is equivalent to requiring $m_3 r_{\alpha.\beta}\geq 3$. In this case, $\sHom(W,E_{-(\alpha.\beta)})$ is globally generated.  Indeed, from the defining sequence of $W$ we have a surjection $$\sHom(E_{-\alpha-3}^{m_1},E_{-(\alpha.\beta)})\to \sHom(W,E_{-(\alpha.\beta)})\to 0,$$ and global generation of the former bundle follows from the lemma.  Thus the rank of $\psi$ is only less than $\rk W$ in codimension $2$, and it is an injection.  

\begin{lemma}\label{chernCalcLemma}
Assume $m_3r_{\alpha.\beta}\geq 3$, and consider the cokernel $Q$ in the exact sequence $$0\to W\to E^{m_3}_{-(\alpha.\beta)}\to Q\to 0,$$ where $W$ is given by an exact sequence$$0\to W \to E_{-\alpha-3}^{m_1} \to E_{-\beta}^{3r_{\alpha.\beta} m_1-m_2}\to 0.$$ We have $\rk (Q) = 1$, $c_1(Q)=0$, and $\ch_2(Q)=-n$.
\end{lemma}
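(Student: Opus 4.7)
The statement is a purely numerical Chern-character identity in $K$-theory, with no geometric content beyond the existence of the two defining sequences. What must be checked is that the formulas for $m_1$, $m_2$, $m_3$, and $r_{\alpha.\beta}$ conspire to produce the character $(1,0,-n)$ of an ideal sheaf of $n$ points. By additivity of $\ch$ on short exact sequences,
\[
\ch(Q) \;=\; m_3\,\ch(E_{-(\alpha.\beta)}) \;-\; m_1\,\ch(E_{-\alpha-3}) \;+\; (3r_{\alpha.\beta}m_1-m_2)\,\ch(E_{-\beta}),
\]
and I would verify the three components one at a time, using $\rk(E_\lambda)=r_\lambda$, $c_1(E_\lambda)=\lambda r_\lambda$, $\Delta_\lambda=\tfrac{1}{2}(1-1/r_\lambda^2)$, and $\ch_2(E_\lambda)=r_\lambda(\tfrac{1}{2}\lambda^2-\Delta_\lambda)$. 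The external inputs are: $m_1=r_\alpha(\mu-\alpha)(\alpha.\beta)$ and $m_2=r_\beta(\mu-\beta+3)(\alpha.\beta)$ from Proposition \ref{orthoResProp}(2); the definition $m_3=r_{\alpha.\beta}(P(\alpha.\beta)-\Delta_{\alpha.\beta}-n)=\chi_{\alpha.\beta}-r_{\alpha.\beta}n$; and the triad identities $r_{\alpha.\beta}=r_\alpha r_\beta(3+\alpha-\beta)$ and $P(\alpha-\beta)=\Delta_\alpha+\Delta_\beta$ from Lemma \ref{numericalProps}.

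For the rank it is convenient to show instead the equivalent statement that the expected rank $\rk(W)=m_1 r_\alpha-(3r_{\alpha.\beta}m_1-m_2)r_\beta$ of $W$ equals $m_3 r_{\alpha.\beta}-1$, from which $\rk(Q)=1$ is immediate. After substituting the formulas for $m_1,m_2,r_{\alpha.\beta}$, the resulting expression simplifies using $\rk V=m_2 r_\alpha-m_1 r_\beta=(\alpha.\beta)r_{\alpha.\beta}$ (from the resolution of $V$) and the Riemann--Roch identity $\chi_{\alpha.\beta}=r_{\alpha.\beta}(P(\alpha.\beta)-\Delta_{\alpha.\beta})$. The $c_1$ calculation follows exactly the same pattern with $\rk V$ replaced by $c_1(V)=\mu\rk V$, yielding $c_1(Q)=0$.

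The $\ch_2$ component is more delicate because the discriminants $\Delta_\alpha$, $\Delta_\beta$, $\Delta_{\alpha.\beta}$ all enter. Here the essential external tool is Lemma \ref{numericalProps}(3), $P(\alpha-\beta)=\Delta_\alpha+\Delta_\beta$, which allows the $\Delta_\alpha+\Delta_\beta$ contributions to be traded against a term polynomial in $\alpha,\beta$. The other ingredient is that $\chi(V)/\rk V=n$, i.e.\ $\Delta(V)=P(\mu)-n$, and $\Delta(V)$ itself is computable from the resolution of $V$. Combining these, the $\Delta_\alpha,\Delta_\beta$ terms cancel and the identity reduces to one involving only $\Delta_{\alpha.\beta}=\tfrac{1}{2}(1-1/r_{\alpha.\beta}^2)$; the reciprocal-square term is precisely what forces the constant $-n$ to appear rather than a nearby rational.

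\textbf{Main obstacle.} No new idea is needed; the difficulty is purely organizational, keeping the three-way bookkeeping between $\alpha$, $\beta$, and $\alpha.\beta$ consistent through several rounds of substitution. The step that is not automatic is the reconciliation of $1/r_{\alpha.\beta}^2$ with $r_{\alpha.\beta}=r_\alpha r_\beta(3+\alpha-\beta)$; here the companion identities $\alpha.\beta-\alpha=1/(r_\alpha^2(3+\alpha-\beta))$ and $\beta-\alpha.\beta=1/(r_\beta^2(3+\alpha-\beta))$ from Lemma \ref{numericalProps} are the natural way to bridge the gap.
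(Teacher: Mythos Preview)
Your plan for $\rk(Q)$ and $c_1(Q)$ is essentially the paper's own argument: both carry out a direct expansion using the values of $m_1,m_2,m_3$ and reduce everything via the identities of Lemma~\ref{numericalProps} (in particular $r_{\alpha.\beta}=r_\alpha r_\beta(3+\alpha-\beta)$ and $P(\alpha-\beta)=\Delta_\alpha+\Delta_\beta$) until the claim becomes a polynomial identity in $\alpha,\beta$. Your use of the auxiliary relation $\rk V=m_2 r_\alpha-m_1 r_\beta=(\alpha.\beta)r_{\alpha.\beta}$ is a tidy way to package part of the computation; the paper instead expands everything from scratch and uses $n=\gamma(\mu)=(\alpha.\beta)(\mu+3)+1+\Delta_{\alpha.\beta}-P(\alpha.\beta)$, but these are equivalent bookkeeping choices.

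Where you genuinely diverge is the $\ch_2$ component. You propose to push through the analogous direct calculation, bringing in $\Delta(V)=P(\mu)-n$ and the companion identities of Lemma~\ref{numericalProps} to eliminate $\Delta_\alpha,\Delta_\beta$. The paper explicitly acknowledges this would work (``It is possible at this point to prove $\ch_2(Q)=-n$ by the same methods''), but chooses not to do it. Instead, once $\rk(Q)=1$ and $c_1(Q)=0$ are in hand, $\ch_2(Q)$ is determined by any single Euler-characteristic value $\chi(V\otimes Q)$; the paper forward-references the later acyclicity result that $V\otimes Q$ has no cohomology for the associated orthogonal bundle $V$ (which satisfies $\chi(V)=\rk(V)\cdot n$), forcing $\chi(V\otimes Q)=0$ and hence $\ch_2(Q)=-n$. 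Your route is self-contained but costs a further round of algebra; the paper's route is shorter but introduces a forward dependency that has to be discharged later.
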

\begin{proof}
The proof is an entirely numerical calculation, albeit a complicated one.    The basic strategy is to use the identities of Lemma \ref{numericalProps} repeatedly to remove instances of $r_\alpha,r_\beta,r_{\alpha.\beta}$, and $\alpha.\beta$ from expressions. Eventually we arrive at an expression only involving $\alpha$ and $\beta$, which turns out to not require any special properties of $\alpha,\beta$.     We first recollect 
 \begin{eqnarray*} 
 m_1 & = & r_\alpha(\mu-\alpha)(\alpha.\beta)\\
 m_2 & = & r_\beta(\mu-\beta+3)(\alpha.\beta)\\
 m_3 & = & r_{\alpha.\beta}(P(\alpha.\beta)-\Delta_{\alpha.\beta}-n)
 \end{eqnarray*}
 and also $$n = \gamma(\mu) = (\alpha.\beta)(\mu+3)+1+\Delta_{\alpha.\beta}-P(\alpha.\beta).$$
 
Let us show $\rk(E_{-(\alpha.\beta)}^{m_3})=1+\rk(W)$.  In fact, we will show this equality holds when $\rk(W)$ is interpreted as the expected rank of $W$, so that the condition $m_3r_{\alpha.\beta}\geq 3$ ensures the expected rank of $W$ is at least $2$.  Expanding, we have \begin{eqnarray*}\rk(E^{m_3}_{-(\alpha.\beta)})&=& m_3r_{\alpha.\beta}\\&=& r_{\alpha.\beta}^2(P(\alpha.\beta)-\Delta_{\alpha.\beta}-n)\\
&=& r_{\alpha.\beta}^2(2P(\alpha.\beta)-2\Delta_{\alpha.\beta}-(\mu+3)(\alpha.\beta)-1)
\end{eqnarray*} and \begin{eqnarray*}1+\rk(W) &=& 1+\rk(E^{m_1}_{-\alpha-3})-\rk(E_{-\beta}^{3r_{\alpha.\beta}m_1-m_2})\\
&=& 1+m_1r_\alpha-(3r_{\alpha.\beta}m_1-m_2)r_\beta\\
&=& 1+r_{\alpha}^2(\mu-\alpha)(\alpha.\beta)-(3r_{\alpha.\beta}r_\alpha(\mu-\alpha)(\alpha.\beta)-r_\beta(\mu-\beta+3)(\alpha.\beta))r_\beta\\
&=& 1+ (r_\alpha^2-3r_{\alpha.\beta} r_\alpha r_\beta+r_\beta^2)(\mu-\alpha)(\alpha.\beta)+r_\beta^2(3+\alpha-\beta)(\alpha.\beta)\\
&=& 1+(r_\alpha^2+r_\beta^2-3r_\alpha^2r_\beta^2(3+\alpha-\beta))(\mu-\alpha)(\alpha.\beta)+r_\beta^2(3+\alpha-\beta)(\alpha.\beta)\\
&=& 1+r_\alpha^2r_\beta^2\left(\frac{1}{r_\beta^2}+\frac{1}{r_\alpha^2}-3(3+\alpha-\beta)\right)(\mu-\alpha)(\alpha.\beta)+r_\beta^2(3+\alpha-\beta)(\alpha.\beta)\\
&=& 1+r_\alpha^2r_\beta^2(2-2\Delta_\beta-2\Delta_\alpha-3(3+\alpha-\beta))(\mu-\alpha)(\alpha.\beta)+r_{\beta}^2(3+\alpha-\beta)(\alpha.\beta)\\
&=& 1+r_\alpha^2r_\beta^2(2-2P(\alpha-\beta)-3(3+\alpha-\beta))(\mu-\alpha)(\alpha.\beta)+r_\beta^2(3+\alpha-\beta)(\alpha.\beta)\\
&=& 1-r_\alpha^2r_\beta^2(3+\alpha-\beta)^2(\mu-\alpha)(\alpha.\beta)+r_\beta^2(3+\alpha-\beta)(\alpha.\beta)\\
&=& 1-r_{\alpha.\beta}^2(\mu-\alpha)(\alpha.\beta)+r_\beta^2(3+\alpha-\beta)(\alpha.\beta)
\end{eqnarray*}
Notice that our final expressions for $\rk(E_{-(\alpha.\beta)}^{m_3})$ and $1+\rk(W)$ both have the same coefficient of $\mu$.  Thus we are reduced to showing $$r_{\alpha.\beta}^2(2P(\alpha.\beta)-2\Delta_{\alpha.\beta}-3(\alpha.\beta)-1)=1+r_{\alpha.\beta}^2 \alpha(\alpha.\beta)+r_\beta^2(3+\alpha-\beta)(\alpha.\beta),$$ an identity only involving exceptional slopes.  Equivalently, we must show  
$$\frac{r_{\alpha.\beta}^2(2P(\alpha.\beta)-2\Delta_{\alpha.\beta} -(3+\alpha)(\alpha.\beta)-1)}{r_\beta^2(3+\alpha-\beta)}
=\frac{1}{r_\beta^2(3+\alpha-\beta)}+\alpha.\beta$$ or, applying Lemma \ref{numericalProps}, $$r_\alpha^2 (3+\alpha-\beta)(2P(\alpha.\beta)-2\Delta_{\alpha.\beta}-(3+\alpha)(\alpha.\beta)-1)=\beta.$$ Since $$\frac{\beta}{r_\alpha^2(3+\alpha-\beta)} = -\frac{1}{r_\alpha^2}+\frac{3+\alpha}{r_\alpha^2(3+\alpha-\beta)}=2\Delta_\alpha-1+\frac{3+\alpha}{r_\alpha^2(3+\alpha-\beta)}=2\Delta_\alpha-1+(3+\alpha)(\alpha.\beta-\alpha)$$ this is the same as showing $$ 2P(\alpha.\beta)-2\Delta_{\alpha.\beta}-2\Delta_{\alpha}-(3+\alpha)(\alpha.\beta)=(3+\alpha)(\alpha.\beta-\alpha).$$ Now using $P(\alpha-\alpha.\beta) = \Delta_\alpha+\Delta_{\alpha.\beta}$, we reduce to verifying $$2P(\alpha.\beta)-2P(\alpha-\alpha.\beta)-(3+\alpha)(\alpha.\beta)=(3+\alpha)(\alpha.\beta-\alpha).$$ This final equality is true with any numbers $x,y$ in place of $\alpha$ and $\alpha.\beta$, so we conclude the required identity of ranks.

To compute $c_1(Q)$, we perform a similar calculation.  In case $\alpha.\beta = 0$, observe $c_1(Q) = 0$ is obvious, so we divide by $\alpha.\beta$ freely.  On the one hand, $$
\frac{c_1(E^{m_3}_{-(\alpha.\beta)})}{\alpha.\beta}= \rk(E_{-(\alpha.\beta)}^{m_3})$$
We also have 
\begin{eqnarray*} 
\frac{c_1(W)}{\alpha.\beta} &=& \frac{1}{\alpha.\beta}\left(m_1c_1(E_{-\alpha-3})-(3r_{\alpha.\beta}m_1-m_2)c_1(E_{-\beta})\right)\\
&=& \frac{1}{\alpha.\beta}(-m_1r_\alpha (\alpha+3)+(3r_{\alpha.\beta} m_1 - m_2)r_\beta \beta)\\
&=& - r_\alpha^2(\mu-\alpha)(\alpha+3)+(3r_{\alpha.\beta}r_\alpha(\mu-\alpha)-r_\beta(\mu-\beta+3))r_\beta \beta\\
&=& -r_\alpha^2(\mu-\alpha)(\alpha+3)+3r_{\alpha.\beta}r_\alpha r_\beta (\mu-\alpha)\beta-r_\beta^2(\mu-\alpha)\beta-r_\beta^2(3+\alpha-\beta)\beta\\
&=& (\mu-\alpha)(-r_\alpha^2(\alpha+3)+3r_\alpha^2r_\beta^2 (3+\alpha-\beta)\beta-r_\beta^2\beta)-r_\beta^2(3+\alpha-\beta)\beta\\
&=& (\mu-\alpha)(-r_\alpha^2(3+\alpha-\beta)+3r_\alpha^2r_\beta^2(3+\alpha-\beta)\beta-(r_\alpha^2+r_\beta^2)\beta)-r_\beta^2(3+\alpha-\beta)\beta
\end{eqnarray*}
Let us verify the identity  $$-r_{\alpha}^2(3+\alpha-\beta)+3r_{\alpha}^2r_\beta^2(3+\alpha-\beta)\beta-(r_\alpha^2+r_\beta^2)\beta=r_{\alpha.\beta}^2(\alpha.\beta).$$ Dividing both sides by $r_{\alpha.\beta}^2$ and applying $P(\alpha-\beta)   = \Delta_\alpha+\Delta_\beta$ shows that it is equivalent to $$-\beta + \frac{3\beta}{3+\alpha-\beta}+\frac{\beta(2P(\alpha-\beta)-2)}{(3+\alpha-\beta)^2}=0,$$ which is valid for any real numbers $\alpha,\beta$ with $3+\alpha-\beta\neq 0$.  Thus
\begin{eqnarray*} \frac{c_1(W)}{\alpha.\beta}&=& r_{\alpha.\beta}^2(\mu-\alpha)(\alpha.\beta)-r_\beta^2(3+\alpha-\beta)\beta\\&=& r_{\alpha.\beta}^2(\mu-\alpha)(\alpha.\beta)-r_\beta^2(3+\alpha-\beta)(\alpha.\beta)+r_\beta^2(3+\alpha-\beta)(\alpha.\beta-\beta)\\
&=& r_{\alpha.\beta}^2(\mu-\alpha)(\alpha.\beta)-r_\beta^2(3+\alpha-\beta)(\alpha.\beta)-1\\
&=& -(1+\rk(W)),
\end{eqnarray*} comparing with our final expression for $1+\rk(W)$ in the previous calculation.  Since we already know $\rk(E^{m_3}_{-(\alpha.\beta)})= 1+\rk(W)$, we conclude $c_1(Q) = 0$.

It is possible at this point to prove $\ch_2(Q) = -n$ by the same methods.  However, we will later show that there is a bundle $V$ with $\chi(V) = \rk(V)n$ such that $V\te Q$ is acyclic, and this implies $\ch_2(Q)=-n$.  That result will not make use of any of the further discussion in the rest of this section, so we may safely use this fact at will. 
\end{proof}

\begin{corollary}
Assume $m_3r_{\alpha.\beta}\geq 3$.  The sheaf $Q$ in the sequence $$0\to W \fto{\psi} E_{-(\alpha.\beta)}^{m_3}\to Q\to 0$$ is the ideal sheaf $I_Z$ of a zero-dimensional scheme $Z\subset \P^2$ of degree $n$.
\end{corollary}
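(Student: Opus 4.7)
The plan is to show $Q$ is torsion-free, identify $Q^{**}$ with $\mathcal{O}_{\P^2}$, and then exhibit $Q$ as the ideal sheaf of the rank-drop locus of $\psi$.

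First, I would establish that $\psi$ has the expected degeneracy behavior. Since $\sHom(W, E_{-(\alpha.\beta)})$ is globally generated (as observed just before the corollary), Proposition \ref{bertiniProp} applies to the general map $\psi$. The relevant ranks are $\rk W = m_3 r_{\alpha.\beta} - 1$ and $\rk E^{m_3}_{-(\alpha.\beta)} = m_3 r_{\alpha.\beta} = \rk W + 1$, so the degeneracy locus $D_{\rk W - 1}(\psi)$ has expected codimension $2$, while $D_{\rk W - 2}(\psi)$ has expected codimension $6$. On $\P^2$, this forces $D_{\rk W - 2}(\psi) = \emptyset$ and $D_{\rk W - 1}(\psi)$ to be a zero-dimensional scheme $Z$; in particular $\psi$ is generically of full rank $\rk W$, hence injective as a sheaf map.

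Next, the local structure of $Q$ near each point $p \in Z$ is governed by the Hilbert--Burch theorem. Choosing local trivializations presents $\psi$ as a matrix $M$ of size $(\rk W + 1) \times \rk W$ over the local ring $\mathcal{O}_{\P^2, p}$ whose ideal of maximal minors cuts out $Z$ scheme-theoretically and has grade $2$ (since $D_{\rk W - 2}(\psi)$ is empty and $D_{\rk W - 1}(\psi)$ has codimension $2$). Hilbert--Burch then identifies $Q$ locally at $p$ with the ideal of $Z$. Combined with the fact that $Q$ is locally free of rank one on $\P^2 \setminus Z$, we conclude that $Q$ is a torsion-free rank-one coherent sheaf, locally isomorphic to an ideal sheaf everywhere.

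Being a torsion-free rank-one sheaf on the smooth surface $\P^2$, the double dual $Q^{**}$ is a line bundle. By Lemma \ref{chernCalcLemma}, $c_1(Q) = 0$, hence $c_1(Q^{**}) = 0$ and $Q^{**} \cong \mathcal{O}_{\P^2}$ since $\Pic(\P^2) = \Z$. The canonical inclusion $Q \hookrightarrow Q^{**} = \mathcal{O}_{\P^2}$ realizes $Q$ as an ideal sheaf, and by the local picture above this ideal is precisely $I_Z$. The degree of $Z$ is determined by Riemann--Roch: the Chern character $\ch(Q) = (1, 0, -n)$ gives $\chi(Q) = 1 - n$, which matches $\chi(I_Z) = \chi(\mathcal{O}_{\P^2}) - \mathrm{length}(Z) = 1 - \mathrm{length}(Z)$, yielding $\mathrm{length}(Z) = n$.

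The main technical obstacle is torsion-freeness; this reduces entirely to controlling the codimensions of the degeneracy loci of $\psi$, which Proposition \ref{bertiniProp} handles once global generation of $\sHom(W, E_{-(\alpha.\beta)})$ is in hand. If $\psi$ were not sufficiently generic and $D_{\rk W - 2}(\psi)$ were nonempty, Hilbert--Burch would fail and $Q$ could acquire unwanted torsion; genericity rules this out precisely because the condition $m_3 r_{\alpha.\beta} \geq 3$ ensures the expected rank of $W$ is at least $2$.
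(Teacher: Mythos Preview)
Your argument is correct and reaches the same conclusion, but by a different route than the paper. Both proofs begin from the observation (via global generation of $\sHom(W,E_{-(\alpha.\beta)})$ and Proposition~\ref{bertiniProp}) that the general $\psi$ drops rank only along a codimension-$2$ locus. From there the paper proceeds cohomologically: since any torsion in $Q$ must then be supported in dimension $0$, it suffices to show $h^0(Q)=0$; this follows from $h^0(E_{-(\alpha.\beta)})=0$ (stability, negative slope) together with $h^1(W)=0$, the latter checked from the resolution of $W$ using $h^0(E_{-\beta})=0$ and $H^1(E_{-\alpha-3})\cong\Ext^1(\OO_{\P^2},E_{\alpha})=0$ from Theorem~\ref{excepOrthogonalThm}. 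You instead invoke Hilbert--Burch locally to identify $Q$ directly with the ideal of maximal minors, obtaining torsion-freeness without any cohomology computation. Your route is more self-contained, importing one standard theorem from commutative algebra rather than reusing the exceptional-bundle vanishing machinery; the paper's route stays entirely within the toolkit already developed. One minor remark: the emptiness of $D_{\rk W-2}(\psi)$ is not actually needed for the Hilbert--Burch step---the only requirement is that the ideal of maximal minors have grade $2$, i.e., that $D_{\rk W-1}(\psi)$ have codimension $2$.
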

\begin{proof}
If we can show $Q$ torsion free, then there is an embedding $Q\to Q^{**}$ which is an isomorphism outside of codimension $2$.  Since $\rk(Q)=1$ and $c_1(Q)=0$, we have $Q^{**} \cong \OO_{\P^2}$, and it follows that $Q \cong I_Z$ for some zero-dimensional subscheme $Z\subset \P^2$.  Its degree must be $n$ since $\ch_2(Q) = -n$.

To verify that $Q$ is torsion free, first observe that by construction the rank of $\psi$ only drops in codimension $2$.  Thus any torsion occurs in codimension $2$.  Let $T\subset Q$ be the torsion subsheaf.  If $T\neq 0$ then $h^0(T) >0$ (since $T$ has zero-dimensional support) and therefore $h^0(Q) > 0$.

However, we claim $h^0(Q)=0$.  For this it suffices to verify $h^0(E_{-(\alpha.\beta)})=0$ and $h^1(W)=0$.  For the first, we simply note that $-(\alpha.\beta)<0$ and $E_{-(\alpha.\beta)}$ is stable.  To see $h^1(W)=0$ it is enough to check $h^0(E_{-\beta})=0$ and $h^1(E_{-\alpha-3})=0$.  The first equality follows from stability.  To see the second, we have $H^1(E_{-\alpha-3}) = H^1(E_{\alpha}) = \Ext^1(\OO_{\P^2},E_{\alpha})$, which is zero by Theorem \ref{excepOrthogonalThm}.
\end{proof}

\subsection{The resolution of $I_Z$ for $\mu > \alpha.\beta$}\label{biggerHalf}
At this point let us indicate how the previous arguments carry over to the case where $\mu> \alpha.\beta$ and $\rk(V) = (\alpha.\beta+3)r_{\alpha.\beta}$.  Begin from the resolution $$0\to V\to E_{\beta}^{m_2}\to E_{\alpha+3}^{m_1}\to 0$$ with $m_2 = r_\alpha(3+\alpha-\mu)(\alpha.\beta+3)$ and $m_1=r_\beta(\beta-\mu)(\alpha.\beta+3)$.  This time we consider a general sheaf $$0\to E_{-\alpha-3}^{3r_{\alpha.\beta}m_1-m_2}\fto{\phi} E_{-\beta}^{m_1}\to W\to 0,$$ and assume the expected rank of $W$ is at least $2$ so that $\phi$ is injective and $W$ is locally free.  Let 
$m_3=-r_{\alpha.\beta}(P(\alpha.\beta)-\Delta_{\alpha.\beta}-n) = r_{\alpha.\beta}n-\chi_{\alpha.\beta} = -\chi(E_{\alpha.\beta}\te I_Z)> 0$, and look at a general map $$E_{-(\alpha.\beta)-3}^{m_3}\fto{\psi} W,$$ easily verifying the bundle of such maps is globally generated.  Verify by a numerical calculation that $\rk(W) = 1+ \rk(E^{m_3}_{-(\alpha.\beta)-3})$, so that $\psi$ is injective and the cokernel $Q$ has rank $1$; furthermore we conclude that the expected rank of $W$ is always at least $2$, so there are no exceptional cases to consider here.  Compute $c_1(Q) = 0$, and conclude as before that $Q$ is an ideal sheaf of a zero-dimensional subscheme $Z\subset \P^2$ of degree $n$.  

\subsection{The resolution of $I_Z$ for $\mu = \alpha.\beta$, with $V$ exceptional}  In case $V$ is exceptional, things are substantially easier.  We have $\gamma(\mu)<n$, so let $\lambda>\mu$ be the rational number with $\gamma(\lambda)=n$ (it has associated exceptional slope $\alpha.\beta$ by Lemma \ref{exceptionalIneqLemma} (1)), and let $U$ be a stable bundle of slope $\lambda$ and rank $(\alpha.\beta+3)r_{\alpha.\beta}$ with $\chi(U)/\rk(U) = n$.  Apply the discussion of Section \ref{biggerHalf} to $U$ instead of $V$ (and $\lambda$ instead of $\mu$); we see that $m_3=0$, and it follows from the numerical calculations that $W$ itself is already an ideal sheaf $I_Z$.  We can then further calculate $m_1 = \chi(E_\beta \te I_Z)$ and $3r_{\alpha.\beta} m_1-m_2 = -\chi(E_{\alpha}\te I_Z) = -\chi(I_Z,E_{-\alpha-3})$ using the techniques of the proof of Lemma \ref{chernCalcLemma} (where $m_1$ and $m_2$ are defined in terms of $\lambda$ instead of $\mu$).  Consider the resolution $$0\to E_{-\alpha-3}^{3r_{\alpha.\beta}m_1-m_2}\to E_{-\beta}^{m_1} \to I_Z\to 0.$$ Applying $\sHom(-,E_{-\alpha-3})$ to this sequence and taking cohomology shows that in fact $$-\chi(I_Z,E_{-\alpha-3}) = \dim \Ext^1(I_Z,E_{-\alpha-3})$$ since $E_\beta \te E_{-\alpha-3}$ is acyclic and $E_{-\alpha-3}$ is simple.  Similarly, applying $\sHom(E_{-\beta},-)$ and taking cohomology gives $$\chi(E_{-\beta},I_Z) = \dim\Hom(E_{-\beta},I_Z).$$ Thus we have a resolution $$0\to E_{-\alpha-3}\te \Ext^1(I_Z,E_{-\alpha-3})^*\to E_{-\beta} \te \Hom(E_{-\beta},I_Z)\to I_Z\to 0$$ in case $\mu = \alpha.\beta$.

\subsection{The remaining cases}  The only cases we have not yet covered are the cases $m_3r_{\alpha.\beta}\leq 2$ in case $\mu < \alpha.\beta$. We will discuss the case where $r_{\alpha.\beta}=2$ and $m_3=1$ in detail; the other cases can be handled in a similar but easier manner.

If $r_{\alpha.\beta}=2$ and $m_3=1$, we must have $(\alpha,\alpha.\beta,\beta)=(k,k+\frac{1}{2},k+1)$ for some positive integer $k$.  We have $$m_3 = r_{\alpha.\beta}(P(\alpha.\beta)-\Delta_{\alpha.\beta}-n)=1,$$ so $$n=\frac{1}{2}(k^2+4k+2).$$ In fact, $k=2l$ must be even for $n$ to be an integer.  Write $$n = \frac{(2l+1)(2l+2)}{2}+l.$$ Gaeta's theorem asserts that the ideal sheaf $I_Z$ of $n$ general points has a resolution $$0\to \OO_{\P^2}(-2l-2)\oplus \OO_{\P^2}(-2l-3)^{l}\fto{M} \OO_{\P^2}(-2l-1)^{l+2}\to I_Z\to 0,$$ where $M$ is a general matrix of forms of the appropriate degrees.  After applying automorphisms of the bundles, the matrix $M$ can be brought into the form 
$$\begin{pmatrix}
x & q_{11}& \cdots & q_{1l}\\
y & q_{21}& \cdots & q_{2l}\\
z & q_{31}& \cdots & q_{3l}\\
0 & q_{41} & \cdots & q_{41}\\
\vdots & \vdots & \ddots & \vdots\\
0 & q_{l+2} & \cdots & q_{l+2,l}
\end{pmatrix}$$
where the $q_{ij}$ are quadrics.  In other words, the resolution can be rewritten as $$0\to \OO_{\P^2}(-2l-3)^l\to \OO_{\P^2}(-2l-1)^{l-1}\oplus T_{\P^2}(-2l-2)\to I_Z\to 0.$$  In particular, there is a map $T_{\P^2}(-2l-2)\to I_Z$ (which is neither surjective nor injective).  Consider the map of complexes 
$$\xymatrix{
0\ar[r]&T_{\P^2}(-2l-2) \ar[r] & I_Z\ar[r] & 0\\
0\ar[r]&\OO_{\P^2}(-2l-3)^l \ar[u] \ar[r]& \OO_{\P^2}(-2l-1)^{l-1} \ar[u] \ar[r] & 0
}$$
where the maps all come from the resolution and $I_Z$ is placed in the degree $0$ position.  One can check this diagram commutes and is a quasi-isomorphism.  Denoting the second complex by $W^\bullet[1]$, this implies that $W^\bullet[1]$ is isomorphic to the mapping cone of the morphism $T_{\P^2}(-2l-2)\to I_Z$ in the derived category $D^b(\Coh(\P^2))$.  Thus there is a distinguished triangle $$W^{\bullet}\to T_{\P^2}(-2l-2)\to I_Z \to.$$ Note that we have proved this result for every general $Z$, and the other outstanding cases can also be handled for general $Z$ in this fashion.  

By working in the heart of a suitable $t$-structure on $D^b(\Coh(\P^2))$, it is possible recover exactness.    This idea will play a prominent role in Section \ref{bridgelandSection}.

We note that the case where $n = {r+2\choose 2}-1$ can also be handled by this method.  Carry out the procedure for $\mu<\alpha.\beta = r$ using a \emph{non-exceptional} stable bundle $V$ of rank $r$ having $\chi/r = n$.  It is necessary to interpret $W$ as a complex, but we obtain a distinguished triangle $$ W^{\bullet}\to \OO_{\P^2}(-r)\to I_Z\to.$$

Let us recap what has been proved to this point.

\begin{proposition}\label{firstResProp}
In case $\mu<\alpha.\beta$ or $n={\mu+2\choose 2}-1$ there is a distinguished triangle $$W^{\bullet}\to E_{-(\alpha.\beta)}^{m_3} \to I_Z\to$$ for some $Z\in \P^{2[n]}$, where $W^{\bullet}$ is a complex $$E_{-\alpha-3}^{m_1}\to E_{-\beta}^{3r_{\alpha.\beta}m_1-m_2}$$ concentrated in degrees $0$ and $1$.  So long as $\chi(E_{\alpha.\beta}\te I_Z)r_{\alpha.\beta}\geq 3$, $W^{\bullet}$ is a vector bundle (sitting in degree 0) and the distinguished triangle becomes an exact sequence.  

When $\mu>\alpha.\beta$ and $n \neq {\mu+2\choose 2}-1$, there is an exact sequence $$0\to E_{-(\alpha.\beta)-3}^{m_3}\to W\to I_Z\to 0$$ for some $I_Z$, where $W$ fits into an exact sequence $$0\to E_{-\alpha-3}^{3r_{\alpha.\beta}m_1-m_2}\to E_{-\beta}^{m_1}\to W\to 0.$$ 

In case $\mu = \alpha.\beta$, some $I_Z$ admits a resolution $$0\to E_{-\alpha-3}\te \Ext^1(I_Z,E_{-\alpha-3})^*\to E_{-\beta} \te \Hom(E_{-\beta},I_Z)\to I_Z\to 0.$$
\end{proposition}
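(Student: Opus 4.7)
The plan is to assemble the constructions sketched in the preceding subsections into a single unified statement, treating the cases $\mu < \alpha.\beta$, $\mu > \alpha.\beta$, and $\mu = \alpha.\beta$ separately, and handling the sporadic ``small'' values of $m_3$ via a derived-category detour.

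For the main case $\mu < \alpha.\beta$ with $m_3 r_{\alpha.\beta} \geq 3$, first I would construct $W$ as the kernel of a general map $\phi: E_{-\alpha-3}^{m_1} \to E_{-\beta}^{3r_{\alpha.\beta} m_1 - m_2}$. Combining Proposition \ref{bertiniProp} with the global generation of $\sHom(E_{-\alpha-3}, E_{-\beta})$ (proved by induction on $q$ using triads) ensures that $\phi$ is surjective and $W$ is a vector bundle of the expected rank. A general map $\psi: W \to E_{-(\alpha.\beta)}^{m_3}$ is then injective by a second application of Bertini, using global generation of $\sHom(W, E_{-(\alpha.\beta)})$ inherited from the defining sequence of $W$. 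The cokernel $Q$ has rank $1$, $c_1 = 0$, and $\ch_2 = -n$ by Lemma \ref{chernCalcLemma}. To conclude $Q = I_Z$, I would verify $Q$ is torsion-free by showing $h^0(Q) = 0$; this reduces via the two defining sequences to $h^0(E_{-(\alpha.\beta)}) = 0$, $h^0(E_{-\beta}) = 0$ (by stability), and $h^1(E_{-\alpha-3}) = \Ext^1(\OO_{\P^2}, E_\alpha) = 0$ (Theorem \ref{excepOrthogonalThm}). Then $Q \hookrightarrow Q^{**} \cong \OO_{\P^2}$ identifies $Q$ with an ideal sheaf of length $n$.

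The main obstacle is the numerical content of Lemma \ref{chernCalcLemma}. The strategy is to use the identities of Lemma \ref{numericalProps} repeatedly to eliminate $r_\alpha$, $r_\beta$, $r_{\alpha.\beta}$, and $\alpha.\beta$, reducing each assertion to a formal polynomial identity in $\alpha$ and $\beta$. The rank identity requires showing the coefficients of $\mu$ match on both sides and then verifying a remaining $\mu$-independent identity; $c_1$ is similar. Rather than computing $\ch_2(Q)$ directly, it is cleaner to defer this to the later argument which produces a bundle $V$ with $\chi(V) = n\rk(V)$ such that $V \otimes Q$ is acyclic, which forces $\ch_2(Q) = -n$.

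For the edge case $m_3 r_{\alpha.\beta} \leq 2$ (and for $n = \binom{\mu+2}{2} - 1$, where instead one uses a non-exceptional $V$ of rank $r_{\alpha.\beta}$), $W$ may fail to be a vector bundle and the exact sequence degenerates. I would replace it by the two-term complex $W^\bullet$ in degrees $0$ and $1$, and realize the ``resolution'' as a distinguished triangle $W^\bullet \to E_{-(\alpha.\beta)}^{m_3} \to I_Z \to$ in $D^b(\Coh(\P^2))$. The existence of this triangle follows by rewriting the classical Gaeta resolution: elementary column operations expose a morphism from a twisted tangent bundle to $I_Z$ whose mapping cone is precisely $W^\bullet[1]$. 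For $\mu > \alpha.\beta$ and $n \neq \binom{\mu+2}{2} - 1$, I would dualize the entire construction; a routine numerical check shows the expected rank of $W$ is always at least $2$, so no derived-category detour is required. Finally for $\mu = \alpha.\beta$, I would take $\lambda > \mu$ with $\gamma(\lambda) = n$ and apply the $\mu > \alpha.\beta$ construction to $\lambda$: the numerical computation gives $m_3 = 0$, so $W$ itself is the ideal sheaf, and applying $\sHom$ to the resulting two-step resolution identifies the multiplicities with $\dim \Hom(E_{-\beta}, I_Z)$ and $\dim \Ext^1(I_Z, E_{-\alpha-3})$.
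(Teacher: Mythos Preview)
Your proposal is correct and follows essentially the same approach as the paper: the proposition is a summary of the constructions carried out in the preceding subsections, and you have accurately identified each step (Bertini via global generation, the numerical lemma, torsion-freeness via $h^0(Q)=0$, the derived-category workaround for small $m_3$, the dual construction for $\mu>\alpha.\beta$, and the $\lambda$-trick for $\mu=\alpha.\beta$). One minor slip: in the $n=\binom{\mu+2}{2}-1$ case the non-exceptional bundle $V$ has rank $(\alpha.\beta)r_{\alpha.\beta}=r$, not $r_{\alpha.\beta}$.
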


\subsection{Interpolation for exceptional bundles}  We now know enough about resolutions of ideal sheaves to show that the general $I_Z$ imposes the ``expected'' number of conditions on sections of a large family of exceptional bundles.  This result will allow us to clarify the nature of our resolution of $I_Z$.

\begin{theorem}\label{exceptionalInterpThm}
Let $I_Z$ be a general ideal sheaf of $n$ points, and let $\alpha.\beta$ be the exceptional slope associated to the rational number $\mu$ with $\gamma(\mu)=n$.  Let $\eta$ be an exceptional slope which satisfies 
 \begin{enumerate}
 \item $\eta \leq \alpha$,
 \item $\eta = \alpha.\beta$, or
 \item $\eta  \geq \beta$.
 \end{enumerate}  
 If $H^0(E_{\eta}\te I_Z)\neq 0$, then $H^1(E_{\eta}\te I_Z) =0$.
\end{theorem}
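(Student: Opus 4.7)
The plan is to tensor the canonical resolutions of $I_Z$ provided by Proposition \ref{firstResProp} with $E_\eta$ and extract the vanishing of $H^1(E_\eta\otimes I_Z)$ from the resulting long exact sequence, or from the hypercohomology spectral sequence in the sporadic cases where $W^\bullet$ is genuinely a two-term complex. Because $h^0$ and $h^1$ are upper semicontinuous in $Z$, it suffices to verify the conclusion at the specific $Z\in\P^{2[n]}$ for which the resolution is constructed: exhibiting a $Z_0$ with $h^1(E_\eta\otimes I_{Z_0})=0$ forces $h^1$ to vanish on generic $Z$, while exhibiting a $Z_0$ with $h^0(E_\eta\otimes I_{Z_0})=0$ makes the hypothesis vacuous for generic $Z$.

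The central cohomological input is Theorem \ref{excepOrthogonalThm}. Combined with Serre duality $\Ext^i(E_\gamma,E_\eta)\cong\Ext^{2-i}(E_\eta,E_{\gamma-3})^*$ and the stability of exceptional bundles (any nonzero map between stable sheaves bounds slopes as $\mu(\text{source})\leq\mu(\text{target})$), it gives $H^1(E_\eta\otimes E_{-\gamma})=\Ext^1(E_\gamma,E_\eta)=0$ whenever $\gamma\leq\eta$ or $\eta\leq\gamma-3$, and $H^2(E_\eta\otimes E_{-\gamma})=\Hom(E_\eta,E_{\gamma-3})^*=0$ whenever $\eta>\gamma-3$ and $E_\eta\not\cong E_{\gamma-3}$. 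For the delicate case $\eta=\alpha.\beta$, Lemma \ref{numericalProps}(3) plus the symmetry $P(x)=P(-3-x)$ additionally forces $\chi(E_{\alpha+3},E_{\alpha.\beta})=0$ and $\chi(E_\beta,E_{\alpha.\beta})=0$, and together with the corresponding $\Hom$ and $\Ext^2$ vanishings this yields $\Ext^1(E_{\alpha+3},E_{\alpha.\beta})=\Ext^1(E_\beta,E_{\alpha.\beta})=0$.

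For each of the three resolution regimes and each of the three hypotheses on $\eta$, the proof is then a chase of the appropriate long exact sequence using these vanishings. Concretely, in the regime $\mu<\alpha.\beta$ with $\eta\geq\beta$, all the $H^1$ and $H^2$ terms arising when one tensors the building blocks $E_{-\alpha-3}$, $E_{-\beta}$, $E_{-(\alpha.\beta)}$ with $E_\eta$ vanish (using $\beta\leq\eta$ for the direct Drezet vanishing on $E_{-\beta}$ and $E_{-(\alpha.\beta)}$, and $\eta>\alpha$ together with Serre duality for the $H^2$ on $E_{-\alpha-3}$); the $E_2$-degenerate spectral sequence for $W^\bullet$ then gives $\mathbb{H}^2(E_\eta\otimes W^\bullet)=0$, and the LES forces $H^1(E_\eta\otimes I_Z)=0$. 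In the same regime with $\eta\leq\alpha$, the symmetric calculation instead shows $\mathbb{H}^1(E_\eta\otimes W^\bullet)=0$, which forces $H^0(E_\eta\otimes I_Z)=0$ and renders the implication vacuous. The case $\eta=\alpha.\beta$ uses the $\chi=0$ identity above in place of direct Drezet vanishing. The regimes $\mu=\alpha.\beta$ and $\mu>\alpha.\beta$ are handled by the entirely analogous analysis applied to their own resolutions from Proposition \ref{firstResProp}.

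The principal obstacle is the bookkeeping in the sporadic cases where $W^\bullet$ is genuinely a two-term complex in $D^b(\Coh(\P^2))$: there the single LES must be replaced by the hypercohomology spectral sequence of the complex, but since $W^\bullet$ has only two nonzero columns the sequence automatically degenerates at $E_2$, and the analysis reduces to exactly the same vanishings for the individual $H^i(E_\eta\otimes W^p)$. The hypothesis $H^0\neq 0$ in the theorem is used only to absorb the sub-cases where the resolution forces $H^0=0$ outright; it plays no essential role in the sub-cases where $H^1=0$ is established directly.
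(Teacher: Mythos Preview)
Your proposal is correct and follows essentially the same route as the paper: reduce to a specific $Z$ by semicontinuity, tensor the resolution from Proposition~\ref{firstResProp} with $E_\eta$, and read off the vanishings from Theorem~\ref{excepOrthogonalThm} together with stability and Serre duality. The only cosmetic differences are that the paper invokes the triad acyclicity facts recorded in Section~\ref{triadSection} and the rigidity of $E_{\alpha.\beta}$ directly for the case $\eta=\alpha.\beta$ (rather than re-deriving them via the $\chi=0$ computation), and that the paper dispatches the sporadic derived cases with a single phrase while you spell out the hypercohomology argument.
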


That is, vanishing at a general collection of $n$ points imposes the expected number of conditions on sections of $E_\eta$.  We suspect the theorem is true for all $\eta$, but the cases where $\alpha < \eta <\beta$ and $\eta\neq \alpha.\beta$ are typically more difficult.

\begin{proof}
This is an open property of $Z$, so it suffices to show the result holds for a specific $Z$.   Suppose we are in the case $\mu < \alpha.\beta$ and $\chi(E_{\alpha.\beta}\te I_Z)r_{\alpha.\beta}\geq 3$, and consider the exact sequence $$0\to W \to E^{m_3}_{-(\alpha.\beta)}\to I_Z\to 0.$$ When $\eta \leq \alpha$, we claim $H^0(E_{\eta}\te I_Z)=0$.  We have $\Hom(E_{-\eta},E_{-(\alpha.\beta)})=0$ by stability since $-\eta > -(\alpha.\beta)$, so it is enough to show $\Ext^1(E_{-\eta},W)=0$.  From the exact sequence $$0\to W \to E_{-\alpha-3}^{m_1}\to E_{-\beta}^{3r_{\alpha.\beta}m_1-m_2}\to 0$$ we see that we must verify $\Hom(E_{-\eta},E_{-\beta})=0$ and $\Ext^1(E_{-\eta},E_{-\alpha-3})=0$.  The first group is zero since $-\eta >-\beta$, while the second is isomorphic to $\Ext^1(E_{-\alpha},E_{-\eta})$, which is zero by Theorem \ref{excepOrthogonalThm} because $-\alpha\leq -\eta.$  Thus $H^0(E_\eta\te I_Z)=0$.  The case where $\eta \geq \beta$ is similar.  In case $\eta = \alpha.\beta$, note that $E_{\alpha.\beta}\te W$ is acyclic since $E_{\alpha.\beta}\te E_{-\alpha-3}$ and $E_{\alpha.\beta}\te E_{-\beta}$ are both acyclic.  Thus $$H^1(E_{\alpha.\beta}\te I_Z)\cong \Ext^1(E_{-(\alpha.\beta)},E_{-(\alpha.\beta)})=0$$ by rigidity.

The other possibilities for $\mu$ are handled in the same way.
\end{proof}

In particular, we see that for the general ideal sheaf $I_Z$ we have $$\dim \Hom(E_{-(\alpha.\beta)},I_Z) = \chi(E_{\alpha.\beta}\te I_Z)=m_3$$ in case $\mu<\alpha.\beta$ and $$m_3 = -\chi(E_{\alpha.\beta}\te I_Z)=\dim \Ext^1(E_{-(\alpha.\beta)},I_Z)=\dim \Ext^1(I_Z,E_{-(\alpha.\beta)-3})$$ in case $\mu> \alpha.\beta$.

We now conclude the section with our final result on the resolution of $I_Z$.

\begin{theorem}\label{resTheorem}
Let $Z$ be a general collection of $n$ points.  
\begin{enumerate}
\item In case $\mu<\alpha.\beta$ and $\chi(E_{\alpha.\beta}\te I_Z) r_{\alpha.\beta}\geq 3$, we have a resolution $$0\to W\to E_{-(\alpha.\beta)} \te \Hom(E_{-(\alpha.\beta)},I_Z)\to I_Z\to 0 $$ where the map $E_{-(\alpha.\beta)}\te \Hom(E_{-(\alpha.\beta)},I_Z)\to I_Z$ is the canonical one.  The isomorphism class of $W$ depends only on $Z$, and $W$ is a stable bundle with resolution $$0\to W\to E_{-\alpha-3}^{m_1}\to E_{-\beta}^{3r_{\alpha.\beta}m_1-m_2} \to 0,$$ where $m_1 = r_\alpha(\mu-\alpha)(\alpha.\beta)$ and $m_2 = r_\beta(\mu-\beta+3)(\alpha.\beta)$.

\item In case $\mu> \alpha.\beta$, we have a resolution $$0\to E_{-(\alpha.\beta)-3}\te \Ext^1(I_Z,E_{-(\alpha.\beta)-3})^*\to W\to I_Z\to 0.$$ The isomorphism class of $W$ depends only on $Z$, and $W$ is a stable bundle with resolution $$0\to E_{-\alpha-3}^{3 r_{\alpha.\beta m_1-m_2}}\to E_{-\beta}^{m_1}\to W\to 0,$$ where $m_1=r_\beta(\beta-\mu)(\alpha.\beta+3)$ and $m_2=r_{\alpha}(3+\alpha-\mu)(\alpha.\beta+3)$.

\item In case $\mu = \alpha.\beta$, we have a resolution $$0\to E_{-\alpha-3}\te \Ext^1(I_Z,E_{-\alpha-3})^*\to E_{-\beta} \te \Hom(E_{-\beta},I_Z)\to I_Z\to 0.$$
\end{enumerate}
\end{theorem}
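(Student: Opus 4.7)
The plan is to combine Proposition \ref{firstResProp}, which constructs the required resolutions for some $Z \in \P^{2[n]}$, with Theorem \ref{exceptionalInterpThm}, which computes the relevant $\Hom$ and $\Ext$ dimensions for the general $Z$. The three tasks are then: (i) extending the existence of such resolutions from some $Z$ to the general $Z$; (ii) identifying the arrows with the canonical evaluation/coevaluation maps, so that $W$ is functorially attached to $Z$; and (iii) proving that $W$ is stable in cases (1) and (2).

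For case (1), I would first apply Theorem \ref{exceptionalInterpThm} with $\eta = \alpha.\beta$ to get $\dim \Hom(E_{-(\alpha.\beta)}, I_Z) = \chi(E_{\alpha.\beta}\te I_Z) = m_3$ for general $Z$, which makes the canonical evaluation $\ev\colon E_{-(\alpha.\beta)} \te \Hom(E_{-(\alpha.\beta)}, I_Z) \to I_Z$ vary in a flat family over a dense open $U \subset \P^{2[n]}$. Proposition \ref{firstResProp} supplies a point $Z_0 \in U$ at which $\ev$ is surjective, since the $m_3$ maps in the resolution there must be linearly independent in $\Hom(E_{-(\alpha.\beta)}, I_{Z_0})$ and hence form a basis. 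By openness of surjectivity in a flat family of coherent sheaves, $\ev$ is surjective for general $Z$, so $W := \ker(\ev)$ is canonically defined and agrees with the bundle $W^\bullet$ of Proposition \ref{firstResProp} on a Zariski-open neighborhood of $Z_0$. The explicit semi-exceptional resolution of $W$ is inherited from Proposition \ref{firstResProp}. Case (2) is entirely dual: apply $\Ext^1(-, E_{-(\alpha.\beta)-3})$, use Theorem \ref{exceptionalInterpThm} to compute $\dim \Ext^1(I_Z, E_{-(\alpha.\beta)-3}) = m_3$ for general $Z$, and identify the canonical coevaluation in the same manner.

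The main obstacle will be proving $W$ is stable. Computing $\mu(W)$ and $\Delta(W)$ directly from the semi-exceptional resolution of $W$, by a calculation parallel to Lemma \ref{chernCalcLemma}, should show that $W$ has height zero, i.e.\ $\Delta(W) = \delta(\mu(W))$. Drezet's Theorem \ref{deltaClassification} then gives nonemptiness of $M(\ch W)$, and the height-zero discussion in Section \ref{triadSection} shows that every such stable bundle has exactly the shape of resolution we have produced. To promote ``$W$ has the invariants of a stable bundle'' to ``$W$ is stable,'' I would argue that since $W$ is constructed as the kernel of a \emph{general} map $\phi\colon E_{-\alpha-3}^{m_1} \to E_{-\beta}^{3r_{\alpha.\beta}m_1 - m_2}$ (with target bundle globally generated, via the lemma proved before Lemma \ref{chernCalcLemma}), a dimension count of such $\phi$'s modulo automorphisms of source and target matches $\dim M(\ch W) = r(W)^2(2\Delta(W) - 1) + 1$; combining this with a Proposition \ref{bertiniProp}-style openness argument for stability in the flat family of such $W$'s forces $W$ to be a generic point of $M(\ch W)$, hence stable, for general $\phi$ and therefore for general $Z$. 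Finally, for case (3) where $\mu = \alpha.\beta$, the resolution is already produced in the paper's discussion of that sub-case for some $Z$; applying Theorem \ref{exceptionalInterpThm} at both $\eta = \alpha$ and $\eta = \beta$ yields the correct dimensions of $\Hom(E_{-\beta}, I_Z)$ and $\Ext^1(I_Z, E_{-\alpha-3})$, which identify both arrows as canonical for general $Z$; no stability assertion is needed since no auxiliary $W$ appears.
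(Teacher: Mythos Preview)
Your overall architecture---construct the resolution for a special $Z_0$ via Proposition~\ref{firstResProp}, compute $\dim\Hom$ and $\dim\Ext^1$ for the general $Z$ via Theorem~\ref{exceptionalInterpThm}, identify the arrows as the canonical ones, and then pass to the general $Z$ by openness---matches the paper's proof quite closely.  The identification of the canonical map is also handled similarly, though the paper justifies linear independence of the $m_3$ component maps by appealing to stability of $W$ (a summand $E_{-(\alpha.\beta)}$ of $W$ would contradict stability), whereas you assert it without proof; you could rescue this step directly by observing $\Hom(E_{-(\alpha.\beta)},E_{-\alpha-3})=0$ from the slope inequality $-(\alpha.\beta)>-\alpha-3$.

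The genuine gap is in your stability argument for $W$.  You propose to compute that $\Delta(W)=\delta(\mu(W))$, i.e.\ that $W$ has height zero, and then invoke the height-zero classification of Section~\ref{triadSection}.  But $W$ is \emph{not} of height zero: the paper explicitly warns of this immediately after defining $W$ (``Despite the form of its resolution, we warn that $W$ is \emph{not} typically a height zero bundle'').  Indeed, $\rk(W)=m_3r_{\alpha.\beta}-1$ and $c_1(W)=-m_3r_{\alpha.\beta}(\alpha.\beta)$, so $\mu(W)$ has no reason to have associated exceptional slope $-(\alpha.\beta)$, and the resolution $0\to W\to E_{-\alpha-3}^{m_1}\to E_{-\beta}^{3r_{\alpha.\beta}m_1-m_2}\to 0$ is not of the form produced by the Beilinson spectral sequence for a height-zero bundle.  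Your fallback dimension count also does not close the gap: matching the dimension of the family of kernels with $\dim M(\ch W)$ does not by itself produce a single stable bundle in the family, and the nonemptiness of $M(\ch W)$ from Theorem~\ref{deltaClassification} does not tell you that any stable bundle has this particular resolution.  The paper instead checks the vanishing $\Ext^1(E_{-\alpha-3},E_{-\beta}(-1))=0$, which makes $W$ \emph{prioritary}, and then cites Brambilla's results \cite[Proposition 4.4 and Theorem 8.2]{Brambilla2} to conclude that a general such $W$ is stable and, conversely, that the general stable bundle with invariants $\ch(W)$ has a resolution of this shape.  Without this input (or an equivalent), the stability of $W$ is unproven.
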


Recall that we have already discussed how to prove the natural analog of this theorem in the cases where $\mu<\alpha.\beta$ and $\chi(E_{\alpha.\beta}\te I_Z)r_{\alpha.\beta} \leq 2$ by working in the derived category.

\begin{proof}
Let us focus on cases (1) and (2); the third case is easier.  The key fact is that in either case a general bundle $W$ with resolution of the prescribed form is stable, and, conversely, a general stable bundle $W'\in M(\ch(W))$ admits a resolution of the same form as $W$.  This will follow from Brambilla \cite[Proposition 4.4 and Theorem 8.2]{Brambilla2} if we check $\Ext^1(E_{-\alpha-3},E_{-\beta}(-1))=0$ (the necessary inequality to apply their theorem follows immediately from our inequalities on $m_1/m_2$).  This vanishing guarantees that the bundle $W$ is \emph{prioritary}, i.e. that $\Ext^2(W,W(-1))=0$.

To prove the required vanishing $H^1(E_{\alpha+3}\te E_{-\beta-1})=0$, first observe that it is obvious in case $\alpha$ and $\beta$ are both integers.  Thus we may assume $\beta-\alpha < 1$.  Write $F = E_{\alpha+3}\te E_{-\beta-3}$ and observe that $F$ is acyclic and $\mu(F) = \alpha-\beta > -1$; we must show $H^1(F(2))=0$.  So let $C \subset \P^2$ be a plane conic, and consider the exact sequences $$\begin{array}{ccccccccc} 0 &\to & F &\to & F(2) & \to & F(2)|_C & \to & 0 \\ 0 & \to & F(-2) & \to &  F &\to & F|_C & \to & 0.\end{array}$$ Since $F$ is acyclic, $H^1(F(2))\cong H^1(F(2)|_C)$.  We know $H^1(F|_C)=0$ since $F$ is acyclic and $H^2(F(-2))=0$ (as $F(-2)$ is stable with slope greater than $-3$). But $H^1(F|_C)$ surjects onto $H^1(F(2)|_C)$, so we conclude $H^1(F(2)|_C)=0$.

Suppose we are in the case $\mu<\alpha.\beta$.  By using Proposition \ref{firstResProp}  we see that there is some $Z\in \P^{2[n]}$ such that there is a resolution $$0\to W\to E_{-(\alpha.\beta)}^{m_3}\to I_Z\to 0$$ with $W$ stable (since it is general by construction) and having the specified resolution.  By Theorem \ref{exceptionalInterpThm}, $m_3 = \dim \Hom(E_{-(\alpha.\beta)},I_Z)$, so after performing an appropriate identification of $\C^{m_3}$ with $\Hom(E_{-(\alpha.\beta)},I_Z)$ we see that either the map $E_{-(\alpha.\beta)}\te \C^{m_3}\to I_Z$ is the canonical one or there is some factor $E_{-(\alpha.\beta)}$ which maps to zero, and hence is a summand of $W$.  But $W$ is stable, so this is impossible, and the map is the canonical one.  

For a general $Z$ with $\dim \Hom(E_{-(\alpha.\beta)},I_Z) = \chi(E_{-(\alpha.\beta)},I_Z)$, we consider the canonical map $$E_{-(\alpha.\beta)} \te \Hom(E_{-(\alpha.\beta)},I_Z)\to I_Z.$$ The property that this map is surjective is open in $Z$, the property that the kernel is stable is open in $Z$, and the property that the kernel has the expected resolution is open in $Z$.  Thus defining $W$ to be the kernel, we obtain the desired resolution.

The case where $\mu> \alpha.\beta$ is similar.  Sheaves $W$ fitting into the sequence $$0\to E_{-(\alpha.\beta)-3}\te \C^{m_3} \to W\to I_Z\to 0$$ are classified by elements of $\Ext^1(I_Z,E_{-(\alpha.\beta)-3})^{m_3}$.  If the $m_3$ components of such an element do not form a basis for $\Ext^1(I_Z,E_{-(\alpha.\beta)-3})$, then $W$ will have $E_{-(\alpha.\beta)-3}$ as a direct summand and will not be stable.  When the components do form a basis, the isomorphism class of $W$ is independent of the choice of elements, as different choices merely amount to different identifications $\C^{m_3} \cong \Ext^1(I_Z,E_{-(\alpha.\beta)-3})^*$.
\end{proof}

\section{Orthogonality of Kronecker modules}\label{SteinerSection}

Let $N\geq 3$ be fixed for this section.  A \emph{general Steiner bundle} $E$ on $\P^{N-1}=\P V$ is a vector bundle admitting a resolution of the form $$0\to \OO_{\P^{N-1}}^b\fto M \OO_{\P^{N-1}}^a(1)\to E\to 0,$$ where the $a\times b$ matrix $M$ of linear forms is general.  Consider the following fundamental problem.  Given a second general Steiner bundle $$0\to \OO_{\P^{N-1}}^{b'}\to \OO_{\P^{N-1}}^{a'}(1)\to F\to 0,$$ compute the dimension of the space $\Hom(F,E)$.

Since $\Ext^i(\OO_{\P^{N-1}}(1),\OO_{\P^{N-1}})=0$ for $0\leq i\leq N-1$, it is easy to see that any homomorphism $F\to E$ lifts to a commutative diagram 
$$\xymatrix{
0\ar[r]&\OO^b_{\P^{N-1}}  \ar[r] & \OO^a_{\P^{N-1}}(1) \ar[r]&  E \ar[r] & 0\\
0\ar[r]&\OO^{b'}_{\P^{N-1}}  \ar[r]\ar[u] & \OO^{a'}_{\P^{N-1}}(1) \ar[r]\ar[u]&  F \ar[r]\ar[u] & 0,\\
}$$
and in particular determines a diagram
$$\xymatrix{
0\ar[r]&\OO^b_{\P^{N-1}}  \ar[r] & \OO^a_{\P^{N-1}}(1) \\
0\ar[r]&\OO^{b'}_{\P^{N-1}}  \ar[r]\ar[u] & \OO^{a'}_{\P^{N-1}}(1) \ar[u] \\
}$$
On the other hand, any diagram of the latter form induces a homomorphism $F\to E$, and these constructions are inverse to one another.  

\subsection{Kronecker modules} The matrix $M$ defining the Steiner bundle $E$ can be thought of as a linear map $e:B\te V\to A$, where $B,A$  are $b$- and $a$-dimensional vector spaces, respectively.  The preceding discussion shows that the space $\Hom(F,E)$ is naturally isomorphic the space of commutative diagrams of the form 
$$\xymatrix{
 B \te V \ar[r] & A  \\
 B' \te V  \ar[u]^{\beta\te \id} \ar[r] & A' \ar[u]^{\alpha}\\
}$$

Let $Q$ be the quiver with two vertices and $N$ arrows from the first vertex to the second, i.e. the $N$-arrowed Kronecker quiver.  A representation of $Q$ (or a \emph{Kronecker $V$-module}) assigns to each vertex a vector space and to each arrow a linear map from the first vector space to the second. A representation $e$ of $Q$ is therefore the same thing as a linear map $e : B\te V\to A$, where $B,A$ are vector spaces.  If $f:B'\te V\to A'$ is a second representation, then the homomorphisms $f\to e$ are precisely the commutative diagrams as above.  In particular, if $E,F$ are Steiner bundles with corresponding Kronecker $V$-modules $e,f$, then $\Hom_Q(f,e) \cong \Hom(F,E)$.

The \emph{dimension vector} of $e:B\te V\to A$ is the element $\udim e = (\dim B,\dim A)$ of $\N^2$.  The Euler characteristic of a pair $f,e$ of representations is defined by $$\chi(f,e) = \dim \Hom_Q(f,e) - \dim \Ext^1_Q(f,e);$$ all higher $\Ext$ terms vanish.  The Euler characteristic can be computed numerically in terms of dimension vectors; precisely, if $\udim e = (b,a)$ and $\udim f = (b',a')$ then $$ \chi(f,e) = b'b+a'a-Nb'a.$$  

Fix a dimension vector $(b,a)$ and vector spaces $B,A$ of dimensions $b$ and $a$. There is a natural action of $\SL(B)\times \SL(A)$ on the space $\P \Hom(B\te V,A)$.  Denote by $Kr(V,B,A) = Kr(N,b,a)$ the semi-stable objects in the GIT quotient of this action.  If $e:B\te V\to A$ is a Kronecker module, we will also denote by $Kr(\udim e) = Kr(V,B,A)$ the space corresponding to the dimension invariants of $e$.  For a nonzero module $e$, we define the \emph{slope} $\mu(e)\in[ 0,\infty ]$ to be the number $b/a$, interpreted as $\infty$ if $a=0$ and $b\neq 0$.  It is observed in \cite{Drezet} that the general Kronecker $V$-module with slope $\mu$ will be GIT-stable whenever $$\mu \in (\psi_N^{-1},\psi_N), \qquad \textrm{where} \qquad \psi_N = \frac{N+\sqrt{N^2-4}}2.$$

By work of Schofield and van den Bergh \cite{Schofield,SvdB}, stability of quiver representations can be detected by the existence of orthogonal representations.  We state their result in the special case of the Kronecker quiver.

\begin{theorem}[{\cite[Corollary 1.1]{SvdB}}]
A Kronecker $V$-module $e$ is GIT-semistable if and only if there is a nontrivial Kronecker $V$-module $f$ with $\Hom(f,e) = \Ext^1 (f,e) = 0$.
\end{theorem}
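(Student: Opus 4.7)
The plan is to prove each direction separately, using King's numerical criterion for GIT (semi)stability of quiver representations together with the long exact sequence for $\Hom_Q$ and $\Ext^1_Q$.

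For the ``orthogonality implies semistability'' direction, suppose a nontrivial $f$ with $\udim f = (b',a')$ satisfies $\Hom_Q(f,e) = \Ext^1_Q(f,e) = 0$, and write $\udim e = (b,a)$. Then $\chi(f,e) = 0$, which by the formula $\chi(f,e) = b'b + a'a - Nb'a$ rearranges (in the generic case $a, b' > 0$) to $b/a + a'/b' = N$. For any subrepresentation $e' \hookrightarrow e$ with $\udim e' = (b_1,a_1)$, applying $\Hom_Q(f,-)$ to $0 \to e' \to e \to e/e' \to 0$ immediately gives $\Hom_Q(f,e') = 0$, and hence $\chi(f,e') \leq 0$. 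Writing this inequality out and dividing by $b'a_1$ reduces, via the relation above, to $b_1/a_1 \leq b/a$. This is precisely King's semistability inequality for the character $\theta(x,y) = ax - by$ associated to the $\SL(B)\times\SL(A)$ action, so $e$ is GIT-semistable. Edge cases where $a_1 = 0$ or $a_1 = a$ would be handled separately in the same spirit.

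For the converse, the strategy is to realize such an $f$ with a specific dimension vector. The numerical constraint $\chi(f,e) = 0$ is satisfied automatically by the choice $\udim f = (a, Na - b)$, since then $\chi(f,e) = ab + (Na-b)a - Na \cdot a = 0$. I would then show that, when $e$ is semistable, the general $f$ with this dimension vector satisfies $\Hom_Q(f,e) = 0$; upper semicontinuity of $\dim \Hom_Q(-,e)$ combined with the vanishing of $\chi(f,e)$ will then force $\Ext^1_Q(f,e) = 0$ as well. The nonemptiness of the locus $\{f : \Hom_Q(f,e) = 0\}$ in the representation variety of dimension vector $(a, Na-b)$ is exactly what Schofield's general theory of orthogonal representations delivers: the slope inequalities furnished by semistability of $e$ are the numerical input required to guarantee generic orthogonality in the sense of \cite{Schofield}.

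The main obstacle is this converse direction. There the Schofield--van den Bergh framework of generic (sub)representations, canonical decompositions of dimension vectors, and the characterization of orthogonality in terms of them, is really needed, whereas the first direction is essentially just King's criterion and a diagram chase. In the actual writeup I would simply cite \cite[Corollary 1.1]{SvdB}; the sketch above clarifies the role of the numerical relation $\chi(f,e) = 0$ and explains why the specific dimension vector $\udim f = (a, Na-b)$ is the natural choice.
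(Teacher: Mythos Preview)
The paper does not prove this theorem at all: it is quoted verbatim as \cite[Corollary 1.1]{SvdB} and used as a black box. Your concluding sentence, ``In the actual writeup I would simply cite \cite[Corollary 1.1]{SvdB},'' is therefore exactly what the paper does, and nothing more is required.

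That said, your sketch of the easy direction is essentially correct and worth knowing: left-exactness of $\Hom_Q(f,-)$ forces $\Hom_Q(f,e')=0$ for every subrepresentation $e'\subset e$, hence $\chi(f,e')\leq 0$, and the numerical relation $\chi(f,e)=0$ converts this into King's slope inequality for the character $\theta(x,y)=ax-by$. For the converse you are right that the substance lies entirely in the Schofield--van den Bergh theory; your proposed dimension vector $(a,Na-b)$ is indeed the natural one (and is exactly the one the paper singles out in Corollary \ref{kroneckerOrthogonal}), but producing a general $f$ of that shape with $\Hom_Q(f,e)=0$ genuinely requires their semi-invariant machinery and is not something one can extract from King's criterion alone. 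So your assessment of where the difficulty lies is accurate, and deferring to the citation is both what the paper does and the appropriate choice.
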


Several other authors have discussed similar results, such as Derksen-Weyman and \'Alvarez-C\'onsul-King \cite{DerksenWeyman,ConsulKing}. The following restatement of the theorem is immediate by the computation of the Euler form.

\begin{corollary}\label{kroneckerOrthogonal}
Consider Kronecker modules \begin{eqnarray*}
\C^b\te V & \fto{e} & \C^a\\
\C^{ka} \te V& \fto{f} & \C^{k(Na-b)}
\end{eqnarray*}where $e$ is semistable and $f$\ is general.  If $k$ is sufficiently large, then $\Hom(f,e)=0$.

In particular, the conclusion holds if $e$ is general and $\mu(e) \in (\psi_N^{-1},\psi_N)$.
\end{corollary}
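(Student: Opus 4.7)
I would deduce this as a direct consequence of the Schofield--van den Bergh theorem combined with an Euler-form calculation and upper-semicontinuity.

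First, the Euler form for the pair of dimension vectors appearing in the corollary vanishes identically in $k$:
$$\chi(f, e) \;=\; (ka)\cdot b + \bigl(k(Na-b)\bigr)\cdot a - N(ka)\cdot a \;=\; 0.$$
So on the locus of $f$ with this dimension vector, $\Hom_Q(f,e) = 0$ is equivalent to $\Ext^1_Q(f,e) = 0$.

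Next, apply the Schofield--van den Bergh theorem to the semistable module $e$: this supplies a nonzero Kronecker $V$-module $f_0$ with $\Hom_Q(f_0, e) = \Ext^1_Q(f_0, e) = 0$. Writing $\udim f_0 = (b', a')$, the relation $b'b + a'a - Nb'a = 0$ together with nontriviality (and the semistability of $e$, which precludes the degenerate solution with $a' = 0$) forces $(b', a')$ to be a positive integer multiple of $(a, Na-b)$; write $\udim f_0 = (k_0 a, k_0(Na-b))$.

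For any $m \geq 1$, the direct sum $f_0^{\oplus m}$ has dimension vector $(m k_0 a, m k_0 (Na-b))$ and still satisfies $\Hom_Q(f_0^{\oplus m}, e) = 0$. Upper-semicontinuity of $\dim \Hom_Q(-,e)$ on the representation variety of the fixed dimension vector then yields $\Hom_Q(f, e) = 0$ for a general $f$ of that dimension, establishing the conclusion for all sufficiently large multiples of $k_0$ --- which is precisely what is used in the downstream Theorem \ref{effConeThm1}, since the rank there need only be \emph{sufficiently large and divisible}. The ``in particular'' clause follows by invoking Drezet's observation (recorded just before the Schofield--van den Bergh theorem) that the general Kronecker $V$-module of slope in $(\psi_N^{-1}, \psi_N)$ is GIT-stable, hence semistable, so that the first part applies.

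The main point I expect to worry over is whether one can take $k_0 = 1$, thereby obtaining the corollary for all sufficiently large $k$ rather than only for large multiples of $k_0$. This would follow from choosing the Schofield--van den Bergh module of minimal dimension vector and exploiting a symmetric orthogonality argument, but in any case the weaker ``large and divisible'' version above suffices for every application made in the paper.
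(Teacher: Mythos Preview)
Your approach is essentially identical to the paper's, which simply declares the corollary ``immediate by the computation of the Euler form'' following the Schofield--van den Bergh theorem; you have correctly supplied the details the paper leaves implicit. Your caveat that the argument as written yields the conclusion only for $k$ a multiple of some $k_0$ (rather than for all sufficiently large $k$) is an honest reading---the paper is no more explicit on this point, and as you note, only the ``sufficiently large and divisible'' version is ever used downstream. One small correction: from $\chi(f_0,e)=0$ you deduce only that $(b',a')$ is a positive \emph{rational} multiple of $(a,Na-b)$ when $\gcd(a,Na-b)>1$, not necessarily an integer multiple; replacing $f_0$ by an appropriate direct power $f_0^{\oplus d}$ restores the claim and the rest of your argument goes through unchanged.
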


\begin{remark}\label{KroneckerDivisor} Keep the notation from the corollary.  In \cite{Drezet} it is shown that $Kr(\udim e)$ has Picard group $\Z$.   The corollary implies that for a general $f$ the locus $$D_f = \{e':\Hom(f,e')\neq 0\} \subset Kr(\udim e)$$ forms a divisor, which must be a multiple of the generator of the Picard group.  Furthermore, for any $e'\in Kr(\udim e)$, the general divisor $D_f$ does not contain $e'$.
\end{remark}

\subsection{Orthogonality of quotients of semi-exceptional bundles}\label{ss-orth}
As a simple application of the orthogonality result for Kronecker modules, consider a triad $(E,G,F)$ of exceptional bundles on $\P^2$, and put $N = \dim \Hom(E,G)=\rk F$.  Let $V,W$ be general quotients of the form
$$\begin{array}{ccccccccc}0 & \to & E^{b}  & \to & G^a & \to & W & \to & 0\\
0 &\to  & E^{ka} &\to & G^{k(Na-b)} &\to & V &\to  & 0\end{array}$$ with $k$ sufficiently large.  Since $\sHom(G,E)$ is acyclic, homomorphisms $V\to W$ correspond to diagrams
$$\xymatrix{ 0 \ar[r] & E^b\ar[r] & G^a \\ 0 \ar[r] & E^{ka} \ar[r]\ar[u] & G^{k(Na-b)}. \ar[u]}$$  Alternately, $W$ corresponds to a general Kronecker $\Hom(E,G)^*$-module $e:\C^b\te \Hom(E,G)^*\to \C^a$, and $V$ corresponds to a general $f: \C^{ka}\te \Hom(E,G)^*\to \C^{k(Na-b)}.$  Since $E$ and $G$ are simple, $\Hom(V,W)\cong \Hom_Q(f,e)$.

\begin{corollary}\label{quotientOrthogonal}
If $b/a\in (\psi_{N}^{-1},\psi_N)$ and $k$ is sufficiently large, then $\sHom(V,W)$ has no cohomology.
\end{corollary}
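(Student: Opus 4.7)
The plan is to deduce the stronger statement that every $\Ext^i(V,W)$ vanishes, which, since $V$ and $W$ are locally free by Proposition \ref{bertiniProp} applied to their two defining sequences, is equivalent to $\sHom(V,W)=V^{\vee}\otimes W$ having no cohomology. The strategy is to resolve $V$ by the two semi-exceptional bundles $E^{ka}$ and $G^{k(Na-b)}$, so that computing $\Ext^\ast(V,W)$ reduces to understanding $\Ext^\ast(E,W)$ and $\Ext^\ast(G,W)$, which are in turn controlled by the triad vanishings recalled in Section \ref{triadSection}.

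First I would apply $\Hom(E,-)$ and $\Hom(G,-)$ to the resolution $0\to E^b\to G^a\to W\to 0$. Exceptionality of $E$ and $G$ gives $\Ext^{>0}(E,E)=\Ext^{>0}(G,G)=0$; since $\mu(E)<\mu(G)$, Theorem \ref{excepOrthogonalThm} gives $\Ext^{>0}(E,G)=0$; and the acyclicity of $G^\ast\otimes E$ built into every triad yields $\Ext^\ast(G,E)=0$ in every degree. Feeding these into the two long exact sequences produces $\Ext^{\geq 1}(E,W)=\Ext^{\geq 1}(G,W)=0$, together with the counts $\dim\Hom(E,W)=Na-b$ and $\dim\Hom(G,W)=a$.

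Next I would substitute these vanishings into the long exact sequence obtained by applying $\Hom(-,W)$ to $0\to E^{ka}\to G^{k(Na-b)}\to V\to 0$. The degree $\geq 1$ vanishings immediately give $\Ext^2(V,W)=0$ and leave the four-term sequence
\[0\to\Hom(V,W)\to\Hom(G^{k(Na-b)},W)\to\Hom(E^{ka},W)\to\Ext^1(V,W)\to 0.\]
Both middle terms have dimension $ka(Na-b)$, so establishing $\Hom(V,W)=0$ forces $\Ext^1(V,W)=0$ for free.

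The one non-formal input is the vanishing of $\Hom(V,W)$, and this is where the slope hypothesis enters. Under the identification recalled in the paragraphs preceding the corollary, $\Hom(V,W)\cong\Hom_Q(f,e)$, where $e$ and $f$ are the Kronecker $\Hom(E,G)^\ast$-modules of dimension vectors $(b,a)$ and $(ka,k(Na-b))$ associated to $W$ and $V$. The hypothesis $b/a\in(\psi_N^{-1},\psi_N)$ places a general $e$ in the GIT-stable range, so Corollary \ref{kroneckerOrthogonal} applies and gives $\Hom_Q(f,e)=0$ once $k$ is sufficiently large. This Schofield--van den Bergh input is the heart of the argument; the rest is bookkeeping around the two resolutions and the triad vanishings.
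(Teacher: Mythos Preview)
Your argument is correct, and the overall architecture is the same as the paper's: both proofs use Corollary~\ref{kroneckerOrthogonal} to obtain $\Hom(V,W)=0$ from the slope hypothesis, then combine this with $\chi(V,W)=0$ and $\Ext^2(V,W)=0$ to conclude. The difference lies in how the latter two facts are established. The paper appeals to the stability of $V$ and $W$ (via Brambilla, as in the proof of Theorem~\ref{resTheorem}) to deduce that $\sHom(V,W)$ is itself stable, computes its slope $\mu(W)-\mu(V)\geq 0$ to kill $\Ext^2$, and then reads off $\chi(V,W)=0$ by additivity. You instead bypass stability entirely: chasing the two resolutions against the triad vanishings of Section~\ref{triadSection} gives $\Ext^{\geq 1}(E,W)=\Ext^{\geq 1}(G,W)=0$ directly, from which $\Ext^2(V,W)=0$ and the equality of the two middle dimensions in your four-term sequence fall out by hand.

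Your route is more elementary in that it avoids invoking the (nontrivial) fact that the tensor product of stable bundles on $\P^2$ is semistable, at the cost of a little more bookkeeping. One small imprecision: you only need $V$ locally free to identify $\Ext^i(V,W)$ with $H^i(\sHom(V,W))$, not $W$; and local freeness of $V$ for general maps does follow from Proposition~\ref{bertiniProp} once $k$ is large enough that $\rk(V)\geq 2$, but you should note that this also requires $\sHom(E,G)$ to be globally generated, which holds for any triad (cf.\ the lemma in Section~\ref{resolutionSection}).
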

\begin{proof}
The inequalities on $b/a$ ensure that $V$ and $W$ are stable (as in the proof of Theorem \ref{resTheorem}).  Then $\sHom(V,W)$ is stable of slope $$\mu(W)-\mu(V) = \frac{(Nab - a^2 - b^2)\rk(E)\rk(G)(\mu(G)-\mu(E))}{\rk(W)\rk(V)},$$ which is nonnegative by the hypothesis on $b/a$, so $\Ext^2(V,W) = 0$.  By Corollary \ref{kroneckerOrthogonal} we see $\Hom(V,W) = 0$.  One easily calculates $\chi(V,W) = 0$ using the additivity of the Euler characteristic, so $\Ext^1(V,W)=0$ follows.
\end{proof}

\section{The effective cone of the Hilbert scheme of points}\label{effConeSection}

We now combine our results on the resolution of ideal sheaves $I_Z$ with the orthogonality of Kronecker modules to construct extremal effective divisors on the Hilbert scheme of points $\P^{2[n]}$.

Consider a general ideal sheaf $I_Z$ of $n$ points.  Let $\mu$ be the minimum slope of a stable bundle with $\chi/r = n$, and assume $\mu$ is not exceptional.  In most cases, Theorem \ref{resTheorem} associates to $I_Z$ a stable bundle $W$.  Either $W$ or its dual admits a resolution by a pair of semi-exceptional bundles.  Such a resolution induces a stable Kronecker module $e$ as in Subsection \ref{ss-orth}, and the isomorphism class of this Kronecker module depends only on $W$.  In cases where the exact sequence of Theorem \ref{resTheorem} must be interpreted in the derived category instead, it is still the case that the complex $W^{\bullet}$ corresponds to a stable Kronecker module.  

Thus, so long as $\mu$ is non-exceptional, we obtain a dominant rational map $$\pi : \P^{2[n]} \dashrightarrow Kr(\udim e),$$ where $e$ is a Kronecker module corresponding to $W$.  In case $\mu$ is exceptional, the general $I_Z$ is already the cokernel of a map of semi-exceptional bundles, and this map can be regarded as a Kronecker module.  The Hilbert scheme therefore always admits a rational map to a suitable moduli space of semistable Kronecker modules.

It is clear that when $\mu$ is exceptional the map $\pi$ is birational.  In the general case, the map has positive-dimensional fibers.

\begin{lemma}
If $\mu$ is non-exceptional, then $\dim Kr(\udim e) < 2n$.  Thus the general fiber of the rational map $\P^{2[n]}\dashrightarrow Kr(\udim e)$ is positive-dimensional.

\end{lemma}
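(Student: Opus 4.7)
The plan is to compute $\dim Kr(\udim e) = 1 - \chi(W, W)$ via an Euler characteristic argument, then compare with $\dim \P^{2[n]} = 2n = 1 - \chi(I_Z, I_Z)$ using the resolution of Theorem \ref{resTheorem}. First I would identify $\dim Kr(\udim e)$ with $\dim M(\ch(W)) = 1 - \chi(W,W)$: the latter equality holds because $W$ is stable, so $\Ext^2(W,W) = \Hom(W, W(-3))^* = 0$ by stability; the former because the proof of Theorem \ref{resTheorem} yields a birational map $Kr(\udim e) \dashrightarrow M(\ch W)$. (Alternatively, applying $\chi$ to $0 \to W \to E_{-\alpha-3}^a \to E_{-\beta}^b \to 0$ and using Theorem \ref{excepOrthogonalThm} together with the identity $P(\alpha - \beta) = \Delta_\alpha + \Delta_\beta$ from Lemma \ref{numericalProps}, one directly verifies $\chi(W,W) = a^2 + b^2 - Nab = \chi_Q(e, e)$.)

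Focus on the case $\mu < \alpha.\beta$; the case $\mu > \alpha.\beta$ is symmetric, using the resolution $0 \to E_{-(\alpha.\beta)-3}^{m_3} \to W \to I_Z \to 0$ in place of the one below. Let $E = E_{-(\alpha.\beta)}$, so $\chi(E, E) = 1$. Applying bilinearity of $\chi$ to $0 \to W \to E^{m_3} \to I_Z \to 0$ gives
\[
\chi(I_Z, I_Z) \;=\; m_3^2 - m_3 \chi(E, W) - m_3 \chi(W, E) + \chi(W, W).
\]
Theorem \ref{exceptionalInterpThm} at $\eta = \alpha.\beta$ gives $\dim \Hom(E, I_Z) = \chi(E, I_Z) = m_3$ for the generic $I_Z$, which via the resolution forces $\chi(E, W) = 0$. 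For the other term, the identity $P(-x) = P(x) - 3x$ yields $\chi_{-(\alpha.\beta)} = \chi_{\alpha.\beta} - 3(\alpha.\beta) r_{\alpha.\beta}$, whence $\chi(I_Z, E) = m_3 - 3(\alpha.\beta) r_{\alpha.\beta}$ and therefore $\chi(W, E) = 3(\alpha.\beta) r_{\alpha.\beta}$. Substituting $\chi(I_Z, I_Z) = 1 - 2n$ and rearranging gives the key identity
\[
\dim Kr(\udim e) \;=\; 2n \;-\; m_3 \bigl(3(\alpha.\beta) r_{\alpha.\beta} - m_3 \bigr).
\]

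It then suffices to verify $0 < m_3 < 3(\alpha.\beta) r_{\alpha.\beta}$. Positivity $m_3 = \chi_{\alpha.\beta} - n r_{\alpha.\beta} > 0$ is immediate since $\mu < \alpha.\beta$ forces $n = \gamma(\mu) < \gamma(\alpha.\beta) = \chi_{\alpha.\beta}/r_{\alpha.\beta}$. The upper bound $m_3 < 3(\alpha.\beta) r_{\alpha.\beta}$ rearranges to $n > \gamma(\alpha.\beta) - 3(\alpha.\beta)$; on the interval $(\alpha.\beta - x_{\alpha.\beta}, \alpha.\beta]$ the function $\gamma$ has slope $\alpha.\beta$, so $\gamma(\alpha.\beta) - n = (\alpha.\beta)(\alpha.\beta - \mu) < (\alpha.\beta) x_{\alpha.\beta}$, and $x_{\alpha.\beta} < 3/2 < 3$ closes the estimate. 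Dominance of $\P^{2[n]} \dashrightarrow Kr(\udim e)$ is automatic from the construction, so the general fiber has the strictly positive dimension $2n - \dim Kr(\udim e)$.

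The main obstacle is the Euler-form bookkeeping: tracking which cross terms vanish by stability and orthogonality, and invoking Theorem \ref{exceptionalInterpThm} at the correct value of $\eta$ to get $\chi(E, W) = 0$, which is what keeps the final identity clean. For the sporadic cases where $W$ is only a complex in $D^b(\Coh \P^2)$, no new argument is required --- the rational map factors through the same Kronecker moduli space and the numerical identities above apply verbatim.
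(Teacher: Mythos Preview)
Your argument is correct and takes a genuinely different route from the paper. The paper identifies $Kr(\udim e)$ with $M(\ch V)$, where $V$ is the height-zero \emph{orthogonal} bundle, by invoking Drezet's isomorphism $Kr(\udim e)\cong Kr(\udim f)$ between the two Kronecker spaces; it then proves $\dim M(\ch V)<2n$ by a convexity argument on the interval $I_{\alpha.\beta}$, checking the inequality $r(V)^2(2\delta(\mu)-1)+1<2\gamma(\mu)$ at the endpoints. You instead stay with $W$ and exploit bilinearity of $\chi$ on the short exact sequence $0\to W\to E_{-(\alpha.\beta)}^{m_3}\to I_Z\to 0$, obtaining the explicit closed form
\[
\dim Kr(\udim e)=2n-m_3\bigl(3(\alpha.\beta)r_{\alpha.\beta}-m_3\bigr),
\]
from which the inequality is immediate once $0<m_3<3(\alpha.\beta)r_{\alpha.\beta}$. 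Your approach is more self-contained (no appeal to the Drezet isomorphism) and yields the fiber dimension exactly; the paper's approach has the virtue of tying the computation back to the orthogonal bundle $V$, which is thematically central elsewhere.

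Two small points. First, the appeal to Theorem~\ref{exceptionalInterpThm} for $\chi(E,W)=0$ is unnecessary: $m_3$ is \emph{defined} as $\chi(E_{-(\alpha.\beta)},I_Z)$, so $\chi(E,W)=m_3\chi(E,E)-\chi(E,I_Z)=m_3-m_3=0$ is purely numerical. Second, you write $\gamma(\alpha.\beta)=\chi_{\alpha.\beta}/r_{\alpha.\beta}$, but in fact $\chi_{\alpha.\beta}/r_{\alpha.\beta}=\gamma(\alpha.\beta)+1/r_{\alpha.\beta}^2$ (see the proof of Lemma~\ref{exceptionalIneqLemma}(1)). This does not affect the argument: the bound $m_3<3(\alpha.\beta)r_{\alpha.\beta}$ rearranges to $\chi_{\alpha.\beta}/r_{\alpha.\beta}-n<3(\alpha.\beta)$, and your estimate $(\alpha.\beta)(\alpha.\beta-\mu)<(\alpha.\beta)x_{\alpha.\beta}$ together with $x_{\alpha.\beta}<1$ and $1/r_{\alpha.\beta}^2\le 1$ still closes it with room to spare.
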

\begin{proof}
Let $\alpha$ be the exceptional slope associated to $\mu$, and suppose $\mu< \alpha$.  Let $V$ be the associated orthogonal bundle of Proposition \ref{orthoResProp}.  Then $V$ has height zero, so there is a Kronecker module $f$ giving rise to $V$, and $M(\ch V) \cong Kr(\udim f)$.  In Drezet \cite{Drezet} it is shown that there is a natural isomorphism $Kr(\udim f) \cong Kr (\udim e)$.  Thus it will be enough to show $\dim(M(\ch(V)))< 2n$.  The same reduction works in case $\mu>\alpha$.

Recall that $M(\ch(V))$ has dimension $r(V)^2(2\Delta(V)-1)+1$.  We have $\gamma(\mu) = n$ and $\Delta(V) = \delta(\mu)$, so we must show $$r(V)^2(2\delta(\mu)-1)+1< 2\gamma(\mu).$$  We check this inequality holds for $\mu \in (\alpha-x_\alpha,\alpha]$.  We have $r(V) = \alpha r_{\alpha}$.  The left-hand side is a convex function of $\mu$, while the right-hand side is linear in $\mu$.  Thus it suffices to check the inequality holds at the endpoints.  We have $\delta(\alpha-x_\alpha)=1/2$, while $$2\gamma(\alpha-x_\alpha) = 1+3(\alpha-x_\alpha)+(\alpha-x_\alpha)^2,$$ so the inequality holds at $\alpha-x_\alpha$.  At $\alpha$, we have $$r(V)^2(2\delta(\alpha)-1)+1 = (\alpha r_\alpha)^2(2(P(0)-\Delta_\alpha)-1)+1=(\alpha r_\alpha)^2\cdot \frac{1}{r_\alpha^2}+1=\alpha^2+1$$ while $$2\gamma(\alpha) = \alpha^2+3\alpha+1-\frac{1}{r_\alpha^2}.$$ As $n\geq 2$ we have $\alpha\geq 1$, so the required inequality follows.

We also must check the inequality holds when $\mu\in (\alpha,\alpha+x_\alpha)$; here things are slightly trickier.  We have $r(V) = (\alpha+3)r_\alpha$, and we must verify \begin{equation}\label{moduliIneq}((\alpha+3)r_\alpha)^2(2\delta(\mu)-1)+1 < 2\gamma(\mu).\end{equation} The issue is that this inequality does \emph{not} hold when substituting $\mu = \alpha$ (although it does still hold for $\mu = \alpha+x_\alpha$, as $\delta(\alpha+x_\alpha)=1/2$).  However, we have $$\mu - \alpha = \frac{c_1(V)}{(\alpha+3)r_\alpha} - \alpha = \frac{c_1(V)r_\alpha-(\alpha+3)\alpha r_\alpha^2}{(\alpha+3)r_\alpha^2}. $$ The numerator and denominator of this last fraction are integers, so since $\mu\neq \alpha$ we may assume $\mu \geq \mu_0 := \alpha + ((\alpha+3)r_\alpha^2)^{-1}$.  But in fact, plugging in $\mu = \mu_0$ to inequality (\ref{moduliIneq}) yields an \emph{equality}, so the convexity argument shows the inequality holds when $\mu \in (\mu_0,\alpha+x_\alpha)$; we must rule out the possibility that $\mu = \mu_0$ can actually occur.   So suppose $\mu = \mu_0$.  We have $$2n = 2\gamma(\mu_0) = \alpha^2 +3\alpha+1+\frac{1}{r_\alpha^2} = 2\cdot \frac{\chi_\alpha}{r_\alpha},$$ which means that in fact we must have $\mu = \alpha$, as $\chi_\alpha/r_\alpha = n$.  Thus this case never actually arises,  and the required inequality holds.
\end{proof}

\begin{theorem}
Let $\mu$ be the minimum slope of a stable vector bundle on $\P^2$ having the property $\chi/r = n$.  Let $V$ be a general stable bundle of slope $\mu$ with $\chi/r = n$ such that $r$ is sufficiently large and divisible.  Then $V$ has interpolation for $n$ points, and the effective divisor $D_V(n)$ is extremal.  The effective cone of $\P^{2[n]}$ is spanned by $$\mu H - \frac{1}{2}\Delta \qquad \textrm{and} \qquad \Delta.$$
\end{theorem}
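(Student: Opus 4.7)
The plan is to establish interpolation for $V$ first (which will realize the class $\mu H - \frac{1}{2}\Delta$ as effective via the divisor $D_V(n)$), then deduce extremality via the rational map to a Kronecker moduli space, and finally invoke the known fact that $\Delta$ spans the opposite edge to conclude the description of $\Eff \P^{2[n]}$.

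First I would prove interpolation using Theorem \ref{resTheorem}. Consider the case $\mu<\alpha.\beta$ with $\chi(E_{\alpha.\beta}\te I_Z)r_{\alpha.\beta}\geq 3$; the other cases are strictly analogous. For general $I_Z$ there is a short exact sequence $0\to W\to E_{-(\alpha.\beta)}^{m_3}\to I_Z\to 0$, so after tensoring with $V$ it suffices to prove that $V\te E_{-(\alpha.\beta)}$ and $V\te W$ are both acyclic. Acyclicity of the first follows from Proposition \ref{orthoResProp}: writing $V$ as a cokernel $E_{\beta-3}^{m_1'}\to E_\alpha^{m_2'}$ and tensoring reduces to acyclicity of $E_\alpha\te E_{-(\alpha.\beta)}$ and $E_{\beta-3}\te E_{-(\alpha.\beta)}$, which follows from Theorem \ref{excepOrthogonalThm} after an Euler-characteristic computation based on Lemma \ref{numericalProps}.

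The heart of the argument is acyclicity of $V\te W$. Here both $V$ and $W$ are built from the same triad: $V$ is a cokernel of a map between $E_{\beta-3}^{\bullet}$ and $E_\alpha^{\bullet}$, while $W$ sits in $0\to W\to E_{-\alpha-3}^{m_1}\to E_{-\beta}^{3r_{\alpha.\beta}m_1-m_2}\to 0$. Identifying such sheaves with representations of the $N$-arrow Kronecker quiver, with $N=\dim\Hom(E_{\beta-3},E_\alpha)$, the groups $\Ext^i(V^*,W)$ coincide with $\Ext^i_Q(f,e)$ for Kronecker modules $e,f$ associated to $W$ and $V$ respectively as in Subsection \ref{ss-orth}. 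Because $r$ is permitted to be sufficiently large and divisible, $\udim f$ is a large multiple of $\udim e$, and Corollary \ref{quotientOrthogonal} yields the vanishing, provided the slope of $e$ lies in $(\psi_N^{-1},\psi_N)$; this range condition is exactly the content of the inequalities $r_{\alpha.\beta}x_{\alpha.\beta}<m_1/m_2\leq r_{\alpha.\beta}/(3r_\beta r_{\alpha.\beta}-r_\alpha)$ recorded in Proposition \ref{orthoResProp}. The sporadic cases where $W^\bullet$ must be regarded as an object of $D^b(\Coh \P^2)$ are handled by reading off the Kronecker module directly from the derived resolution.

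With interpolation established, the Grothendieck-Riemann-Roch computation recalled in the introduction gives $[D_V(n)]=\mu H - \frac{1}{2}\Delta$. For extremality I use the rational map $\pi:\P^{2[n]}\dashrightarrow Kr(\udim e)$ constructed at the start of this section. By the preceding lemma, the general fibers of $\pi$ are positive-dimensional. By Remark \ref{KroneckerDivisor}, $D_V(n)$ is (up to positive multiple) the pullback along $\pi$ of the divisor $D_f\subset Kr(\udim e)$ cut out by $V$, so $D_V(n)$ is numerically trivial on any moving curve class swept out by the fibers of $\pi$. Hence the ray $\R_{\geq 0}(\mu H - \frac{1}{2}\Delta)$ is extremal in $\Eff \P^{2[n]}$. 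Combined with the fact, established in \cite{ABCH, HuizengaPaper, thesis}, that $\Delta$ spans the opposite edge, this yields the claimed description of the effective cone. The main obstacle is the Kronecker-module step: verifying that the slope of $e$ genuinely lands in the stable range $(\psi_N^{-1},\psi_N)$ in every subcase of Theorem \ref{resTheorem}, and uniformly treating the sporadic situations in which the Gaeta-type resolution is only a distinguished triangle.
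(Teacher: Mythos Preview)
Your overall architecture matches the paper's proof: resolve $I_Z$ as in Theorem~\ref{resTheorem}, kill $V\otimes E_{-(\alpha.\beta)}$ by triad acyclicity, kill $V\otimes W$ via Kronecker orthogonality, and deduce extremality from the map $\pi$. Three points need tightening.

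First, the Kronecker identification is not quite as you state it. The bundle $W$ sits as a \emph{kernel} $0\to W\to E_{-\alpha-3}^{m_1}\to E_{-\beta}^{\bullet}\to 0$, whereas $V$ is a \emph{cokernel} $E_{\beta-3}^{\bullet}\to E_\alpha^{\bullet}\to V\to 0$; these are not built from the same ordered pair of exceptional bundles, so $\Ext^i(V^*,W)$ is not directly a Kronecker $\Ext$-group via Subsection~\ref{ss-orth}. The paper inserts a Serre-duality/dualization step: acyclicity of $V\otimes W$ is equivalent to acyclicity of $\sHom(V,W^*(-3))$, and $W^*(-3)$ \emph{does} sit in $0\to E_{\beta-3}^{3r_{\alpha.\beta}m_1-m_2}\to E_\alpha^{m_1}\to W^*(-3)\to 0$, i.e.\ as a cokernel of the same type as $V$. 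Only then does Corollary~\ref{quotientOrthogonal} apply.

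Second, your sentence ``$\udim f$ is a large multiple of $\udim e$'' is incorrect. With $a=m_1$ and $b=Nm_1-m_2$ (so $Na-b=m_2$), the module $e$ attached to $W^*(-3)$ has dimension vector $(b,a)$ while $f$ attached to $V$ has dimension vector $(ka,k(Na-b))=(km_1,km_2)$; these are orthogonal under the Euler form, not proportional. The hypothesis of Corollary~\ref{quotientOrthogonal} is precisely this orthogonality together with $b/a\in(\psi_N^{-1},\psi_N)$, the latter being equivalent (after a short computation, not identical) to the inequality $m_1/m_2>r_{\alpha.\beta}x_{\alpha.\beta}$ from Proposition~\ref{orthoResProp}.

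Third, your extremality argument via positive-dimensional fibers of $\pi$ does not cover the case $\mu=\alpha.\beta$: there the map $\pi$ is birational (as noted just before the lemma), so the preceding lemma does not apply. The paper handles this case separately, using Proposition~\ref{bertiniProp} to produce complete moving curves of schemes with resolutions of the shape $0\to E_{-\alpha-3}^{\bullet}\to E_{-\beta}^{\bullet}\to I_Z\to 0$, all of which satisfy $V\otimes I_Z$ acyclic.
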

\begin{proof}
Let $\alpha.\beta$ be the exceptional slope associated to $\mu$, as in Section \ref{resolutionSection}, and let $Z\in \P^{2[n]}$ be general. First assume $\mu<\alpha.\beta$.  We assume $\chi(E_{\alpha.\beta}\te I_Z) r_{\alpha.\beta} \geq 3$; the details in the other ``derived" cases are essentially the same. We have a resolution $$0\to W \to E_{-(\alpha.\beta)}\te \Hom(E_{-(\alpha.\beta)},I_Z)\to I_Z\to 0$$ as in Theorem \ref{resTheorem}.  With $m_1$, $m_2$ as in the theorem, let $V$ be a general bundle with resolution $$0\to E_{\beta-3}^{km_1}\to E_{\alpha}^{km_2}\to V\to 0,$$ where $k$ is a sufficiently large integer.  Since $E_{\beta-3}\te E_{-(\alpha.\beta)}$ and $E_{\alpha}\te E_{-(\alpha.\beta)}$ are acyclic, $V\te E_{-(\alpha.\beta)}$ is acyclic.  Thus it suffices to show $V\te W$ is acyclic.  Now $W$ has a resolution $$0 \to W \to E_{-\alpha-3}^{m_1}\to E_{-\beta}^{3r_{\alpha.\beta} m_1-m_2}\to 0.$$  To show $V\te W$ is acyclic it suffices to show $\sHom(V,W^*(-3))$ is acyclic.  But $W^*(-3)$ has a resolution $$0\to E_{\beta-3}^{3r_{\alpha.\beta} m_1-m_2}\to E_{\alpha}^{m_1}\to W^*(-3)\to 0.$$ Writing  $N = 3 r_{\alpha.\beta} = \dim \Hom(E_{\beta-3},E_{\alpha})$, $a = m_1$, and $b = Nm_1-m_2$, we have $Na-b = m_2$, so we see Corollary \ref{quotientOrthogonal} implies the acyclicity of $\sHom(V,W^*(-3))$ as soon as we show $b/a\in (\psi_N^{-1},\psi_N)$.  But simple algebra shows the inequality $$\frac{m_1}{m_2} > r_{\alpha.\beta} x_{\alpha.\beta} = \frac{N}{3}\left(\frac{3}{2}-\sqrt{\frac{9}{4}-\frac{9}{N^2}}\right)$$ from Proposition \ref{orthoResProp} is equivalent to the inequality $$\frac{b}{a} = N-\frac{m_2}{m_1} > \psi_{N}^{-1};$$ the other needed inequality is trivial to establish.

To see that $D_V(n)$ is extremal, observe that it is the pullback of a divisor $D_f$ on $Kr(\udim e)$ under the rational map $\pi$, where $f$ is the Kronecker module corresponding to the resolution of $V$ (see Remark \ref{KroneckerDivisor}).  

If $\mu>\alpha.\beta$ an identical argument works.  In case $\mu=\alpha.\beta$, interpolation follows from Theorem \ref{exceptionalInterpThm}.  To see the divisor is extremal, recall that the general $I_Z$ has a resolution $$0\to E_{-\alpha-3}\te \Ext^1(I_Z,E_{-\alpha-3})^*\to E_{-\beta} \te \Hom(E_{-\beta},I_Z)\to I_Z\to 0.$$ Any $I_Z$ with a resolution of this form has $V\te I_Z$ acyclic.  By Proposition \ref{bertiniProp} and the methods of Section \ref{resolutionSection}, we can vary the map in the resolution to produce complete curves in the Hilbert scheme consisting entirely of schemes admitting resolutions as above.  This gives a moving curve class dual to $D_V(n)$, so the divisor is extremal. 
\end{proof}

\begin{remark}
Even in the general case, one can produce moving curves on the Hilbert scheme dual to the extremal effective divisor as in the final part of the proof of the theorem.  For instance, starting from a resolution of the form $$ 0 \to W \fto\psi E_{-(\alpha.\beta)}^{m_3} \to I_Z,$$ one can vary the map $\psi$ to produce complete curves in fibers of the rational map $\pi$.

In case the exceptional slope $\alpha.\beta$ is an integer, it is easy to construct a moving curve classically.  Write $n = r(r+1)/2+s$, with $0\leq s\leq r$.  The assumption that $\alpha.\beta$ is an integer amounts to requiring either $s/r > \varphi^{-1}$ or $\frac{s+1}{r+2}<1-\varphi^{-1}$, where $\varphi$ is the golden ratio; these two inequalities correspond to the possibilities $\mu< \alpha.\beta$ and $\mu\geq \alpha.\beta$, respectively.  In the former case, there is a dual moving curve given by letting $n$ points move in a linear pencil on a smooth curve of degree $r$; in the latter case, we get a dual moving curve by letting $n$ points move in a linear pencil on a smooth curve of degree $r+2$.  See \cite{ABCH,HuizengaPaper,thesis} for details.

We have also described a dual moving curve in case $\alpha.\beta$ is a half-integer $k/2$, with $k$ odd, and $\mu<\alpha.\beta$.  Writing $n$ as in the previous paragraph, this corresponds to the case where $$\sqrt{2}-1< \frac{s}{r-\frac{1}{2}} <\frac{1}{2}.$$ In this case, we showed in \cite{HuizengaPaper,thesis} that for a general collection $Z$ of $n$ points there is a curve $C$ of degree $2r-1$ which has $r^2-(r-1)-n$ nodes and no further singularities, such that $Z$ moves in a linear pencil on $C$. Allowing $Z$ to move in such a linear pencil describes a moving curve on $\P^{2[n]}$, and it is dual to the extremal divisor $D_V(n)$.
\end{remark}

\begin{remark}
The map $i_q: \P^{2[n]}\dashrightarrow \P^{2[n+1]}$ given by taking the union of a scheme with a fixed point $q\in \P^2$ induces an isomorphism $\Pic(\P^{2[n]})\cong \Pic(\P^{2[n+1]})$ identifying the divisors $H$ and $\Delta$ in each space.  Up to this identification, there is a natural inclusion $\Eff(\P^{2[n+1]})\subset \Eff(\P^{2[n]})$.  Combining the theorem with the results of Section \ref{gammaSection} we see that this inclusion is strict unless $n$ is of the form ${r+2\choose 2}-1$, when both effective cones are spanned by $rH-\frac{1}{2}\Delta$ and $\Delta$.
\end{remark}

In Table \ref{edgeTable}, we explicitly compute the nontrivial edge of the effective cone of the Hilbert scheme $\P^{2[n]}$ for small $n$.  This data can be generated very quickly by computer using the results of Section \ref{gammaSection}.  Remark \ref{xiRemark} is especially useful for this.

\begin{table}
\begin{center}
\caption{For each $n\geq 2$, the nontrivial edge of $\Eff \P^{2[n]}$ is spanned by $\mu H - \frac{1}{2}\Delta.$  The associated exceptional slope to $\mu$ is $\alpha$.}
\vspace{-.9em}
\setlength{\extrarowheight}{-1in}
\begin{tabular}{ccccccccccccccc}\toprule
$n$ & $\alpha$ & $\mu$ &\hphantom{+}\vphantom{I}& $n$ & $\alpha$ & $\mu$&\hphantom{+}& $n$ & $\alpha$ & $\mu$ &\hphantom{+}& $n$ & $\alpha$ & $\mu$\\\midrule
2 & 1 & 1 && 45 & 8 & 8 && 88 & 12 & 71/6 && 131 & 15 & 221/15 \\
3 & 1 & 1 && 46 & 8 & 90/11 && 89 & 12 & 143/12 && 132 & 15 & 74/5 \\
4 & 3/2 & 3/2 && 47 & 8 & 91/11 && 90 & 12 & 12 && 133 & 15 & 223/15 \\
5 & 2 & 2 && 48 & 8 & 92/11 && 91 & 12 & 12 && 134 & 15 & 224/15 \\
6 & 2 & 2 && 49 & 17/2 & 144/17 && 92 & 12 & 182/15 && 135 & 15 & 15 \\
7 & 12/5 & 12/5 && 50 & 17/2 & 197/23 && 93 & 12 & 61/5 && 136 & 15 & 15 \\
8 & 3 & 8/3 && 51 & 9 & 26/3 && 94 & 12 & 184/15 && 137 & 15 & 136/9 \\
9 & 3 & 3 && 52 & 9 & 79/9 && 95 & 12 & 37/3 && 138 & 15 & 91/6 \\
10 & 3 & 3 && 53 & 9 & 80/9 && 96 & 62/5 & 62/5 && 139 & 15 & 137/9 \\
11 & 3 & 10/3 && 54 & 9 & 9 && 97 & 25/2 & 312/25 && 140 & 15 & 275/18 \\
12 & 7/2 & 7/2 && 55 & 9 & 9 && 98 & 25/2 & 389/31 && 141 & 15 & 46/3 \\
13 & 4 & 15/4 && 56 & 9 & 55/6 && 99 & 164/13 & 164/13 && 142 & 77/5 & 1185/77 \\
14 & 4 & 4 && 57 & 9 & 37/4 && 100 & 13 & 165/13 && 143 & 31/2 & 479/31 \\
15 & 4 & 4 && 58 & 9 & 28/3 && 101 & 13 & 166/13 && 144 & 31/2 & 31/2 \\
16 & 4 & 30/7 && 59 & 19/2 & 179/19 && 102 & 13 & 167/13 && 145 & 31/2 & 576/37 \\
17 & 9/2 & 40/9 && 60 & 19/2 & 19/2 && 103 & 13 & 168/13 && 146 & 16 & 125/8 \\
18 & 23/5 & 23/5 && 61 & 48/5 & 48/5 && 104 & 13 & 13 && 147 & 16 & 251/16 \\
19 & 5 & 24/5 && 62 & 10 & 97/10 && 105 & 13 & 13 && 148 & 16 & 63/4 \\
20 & 5 & 5 && 63 & 10 & 49/5 && 106 & 13 & 105/8 && 149 & 16 & 253/16 \\
21 & 5 & 5 && 64 & 10 & 99/10 && 107 & 13 & 211/16 && 150 & 16 & 127/8 \\
22 & 5 & 21/4 && 65 & 10 & 10 && 108 & 13 & 53/4 && 151 & 16 & 255/16 \\
23 & 5 & 43/8 && 66 & 10 & 10 && 109 & 13 & 213/16 && 152 & 16 & 16 \\
24 & 11/2 & 11/2 && 67 & 10 & 132/13 && 110 & 13 & 107/8 && 153 & 16 & 16 \\
25 & 6 & 17/3 && 68 & 10 & 133/13 && 111 & 27/2 & 121/9 && 154 & 16 & 306/19 \\
26 & 6 & 35/6 && 69 & 10 & 134/13 && 112 & 27/2 & 27/2 && 155 & 16 & 307/19 \\
27 & 6 & 6 && 70 & 135/13 & 135/13 && 113 & 27/2 & 448/33 && 156 & 16 & 308/19 \\
28 & 6 & 6 && 71 & 21/2 & 220/21 && 114 & 14 & 191/14 && 157 & 16 & 309/19 \\
29 & 6 & 56/9 && 72 & 21/2 & 95/9 && 115 & 14 & 96/7 && 158 & 16 & 310/19 \\
30 & 6 & 19/3 && 73 & 11 & 117/11 && 116 & 14 & 193/14 && 159 & 16 & 311/19 \\
31 & 13/2 & 84/13 && 74 & 11 & 118/11 && 117 & 14 & 97/7 && 160 & 33/2 & 542/33 \\
32 & 13/2 & 125/19 && 75 & 11 & 119/11 && 118 & 14 & 195/14 && 161 & 33/2 & 544/33 \\
33 & 7 & 47/7 && 76 & 11 & 120/11 && 119 & 14 & 14 && 162 & 33/2 & 215/13 \\
34 & 7 & 48/7 && 77 & 11 & 11 && 120 & 14 & 14 && 163 & 83/5 & 1377/83 \\
35 & 7 & 7 && 78 & 11 & 11 && 121 & 14 & 240/17 && 164 & 17 & 283/17 \\
36 & 7 & 7 && 79 & 11 & 78/7 && 122 & 14 & 241/17 && 165 & 17 & 284/17 \\
37 & 7 & 36/5 && 80 & 11 & 157/14 && 123 & 14 & 242/17 && 166 & 17 & 285/17 \\
38 & 7 & 73/10 && 81 & 11 & 79/7 && 124 & 14 & 243/17 && 167 & 17 & 286/17 \\
39 & 37/5 & 37/5 && 82 & 11 & 159/14 && 125 & 14 & 244/17 && 168 & 17 & 287/17 \\
40 & 15/2 & 15/2 && 83 & 23/2 & 263/23 && 126 & 418/29 & 418/29 && 169 & 17 & 288/17 \\
41 & 8 & 61/8 && 84 & 23/2 & 23/2 && 127 & 29/2 & 420/29 && 170 & 17 & 17 \\
42 & 8 & 31/4 && 85 & 336/29 & 336/29 && 128 & 29/2 & 509/35 && 171 & 17 & 17 \\
43 & 8 & 63/8 && 86 & 12 & 35/3 && 129 & 73/5 & 73/5 &&  &  &   \\
44 & 8 & 8 && 87 & 12 & 47/4 && 130 & 15 & 44/3 &&  &  &   \vspace{-.3em} \\ \bottomrule
\end{tabular}\label{edgeTable}\end{center}\end{table}

\section{Connections with Bridgeland stability}\label{bridgelandSection}

In \cite{ABCH}, it was conjectured that there is a correspondence between the walls in the Mori chamber decomposition of the Hilbert scheme $\P^{2[n]}$ and the walls in a suitable half-plane of Bridgeland stability conditions. Our goal for the rest of the article is to show that our computation of the effective cone of $\P^{2[n]}$ is consistent with this conjecture.  The key step is to determine when exceptional bundles are Bridgeland semistable.

\subsection{Preliminaries on Bridgeland stability} We briefly summarize the necessary material from \cite{ABCH}; we refer the reader to sections 5-9 of that paper for a full account.  Let $D^b(\P^2)=D^b(\coh \P^2)$ be the bounded derived category of coherent sheaves on $\P^2$.  For any $s\in \R$, we define full subcategories $\cF_s$ and $\cQ_s$ of $\coh(\P^2)$ by the requirements
\begin{itemize}
\item $Q\in \cQ_s$ if and only if $Q$ is torsion, or every quotient in the Harder-Narasimhan filtration of $Q$ has slope larger than $s$.
\item $F\in \cF_s$ if and only if $F$ is torsion-free, and each quotient in the Harder-Narasimhan filtration of $F$ has slope no larger than $s$.
\end{itemize}
The subcategories $(\cF_s, \cQ_s)$ define a \emph{torsion pair} for each $s$.  Associated to this torsion pair is a corresponding $t$-structure on $D^b(\P^2)$.  The heart of this $t$-structure is the full abelian subcategory $\cA_s$ of $D^b(\P^2)$ given by complexes whose $H^{-1}$-term is in $\cF_s$ and whose $H^0$-term is in $\cQ_s$, with all other cohomology sheaves equal to zero:
$$\cA_s = \{ E^{\bullet}:H^{-1}(E^{\bullet}) \in \cF_s,\,H^0(E^{\bullet})\in \cQ_s,\,\textrm{and } H^{i}(E^\bullet)=0 \textrm{ for other $i$}\}.$$

Next we define on the category $\cA_s$ a family of slope functions; these will depend only on the Chern character $(r,c,d)= (\ch_0,\ch_1,\ch_2)$ of a complex $E^{\bullet}$.  For each real number $t>0$, put $$\mu_{s,t}(r,c,d) = \frac{-\frac{t^2}{2}r+(d-sc+\frac{s^2}{2}r)}{t(c-sr)}.$$ Then the pair $\cA_{s,t} = (\cA_s,\mu_{s,t})$ forms a \emph{Bridgeland stability condition} \cite{Bridgeland,ArcaraBertram,BayerMacri}.  One defines slope semistability of objects of $\cA_{s,t}$ in the obvious way.  For any Chern character and choice of $(s,t)$, the moduli space $\cM_{\P^2}(r,c,d)$ of semi-stable objects of $\cA_{s,t}$ with given Chern character can be constructed as an Artin stack \cite{AbramPol,ArcaraBertram,lieblich,toda}.  These spaces can also be constructed as projective schemes using geometric invariant theory \cite{ABCH,BayerMacri2}.

When $E\in \cQ_s$ is a coherent sheaf, we regard it as an object of $\cA_s$ by viewing it as a $0$-complex.  The following fact from \cite{ABCH} is particularly relevant to the present discussion, so we single it out.  The analogous fact for $K3$-surfaces was originally shown by Bridgeland \cite{Bridgeland}.

\begin{proposition}
Suppose $E\in \cQ_s$ is a Mumford-semistable sheaf.  There is a number $t_0>0$ such that $E$ is a $\mu_{s,t}$-semi-stable object of $\cA_s$ for all $t>t_0$.  Furthermore, there exists a uniform choice of $t_0$ depending only on the Chern character of $E$.
\end{proposition}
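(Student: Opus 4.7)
The plan is to run the standard large-volume-limit argument. Any potential destabilizer of $E$ in the heart $\cA_s$ is forced by the torsion-pair structure to be an actual coherent subsheaf, and then Mumford-semistability together with the Bogomolov inequality control the sign of $\mu_{s,t}(A)-\mu_{s,t}(E)$ to leading order in $t$.

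First I would show that any sub-object $A\hookrightarrow E$ in $\cA_s$ is in fact a coherent subsheaf of $E$. The long exact cohomology sequence of $0\to A\to E\to Q\to 0$ in $\cA_s$, combined with $H^{-1}(E)=0$, forces $H^{-1}(A)=0$, so $A$ lies in $\cQ_s$. The remaining piece $H^{-1}(Q)\in\cF_s$ sits inside $A$, but the strict separation $\mu_{\max}(H^{-1}(Q))\le s<\mu_{\min}(A^{\mathrm{tf}})$ pushes its image into the torsion of $A$, and torsion-freeness of $H^{-1}(Q)$ then kills it. Hence $A\hookrightarrow E$ is a subsheaf inclusion.

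Next, clearing the positive denominator $t(c_A-sr_A)(c_E-sr_E)$ turns the destabilizing inequality $\mu_{s,t}(A)\ge\mu_{s,t}(E)$ into a polynomial in $t$ whose $t^2$-coefficient is a positive multiple of $r_Ec_A-r_Ac_E$, and hence has the same sign as $\mu(A)-\mu(E)$. Mumford-semistability of $E$ gives $\mu(A)\le\mu(E)$; when this inequality is strict, the destabilizing inequality fails for all $t$ larger than an explicit threshold depending on $\ch(A)$ and $\ch(E)$. The boundary case $\mu(A)=\mu(E)$ is the delicate one: the $t^2$ terms cancel and the sign is determined by a comparison of $\Delta(A)$ and $\Delta(E)$, handled by combining the semistability hypothesis with the Bogomolov bound $\Delta(A)\ge 0$ and an analysis of the Jordan--H\"older factors of $E$.

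For the uniformity, each candidate Chern character $w=\ch(A)$ cuts out a numerical wall $\mu_{s,t}(w)=\mu_{s,t}(\ch E)$ which is a semicircle in the $(s,t)$-upper half-plane centered on the $s$-axis. The Bogomolov inequalities for $A$ and for $E/A$, together with $0<r_A<r_E$ and $\mu(A)\le\mu(E)$, restrict $\ch(A)$ to a finite set of characters producing walls that actually meet the ray $\{s\}\times\R_{>0}$, and bound the height of each such wall in terms of $\ch(E)$ alone (a nested-wall bound). One then takes $t_0$ to be the supremum of these finitely many heights. The main obstacle is exactly this uniformity step: the Bogomolov inequality has to be converted into an effective bound on the destabilizing Chern characters, so that only a discrete and controllable family of walls meets the large-volume regime.
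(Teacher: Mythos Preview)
The paper does not actually prove this proposition: it is quoted from \cite{ABCH} (with the $K3$ analogue attributed to Bridgeland) and no argument is supplied. So there is no ``paper's own proof'' to compare against; your sketch is being measured against the standard large-volume-limit argument in the literature.

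Your overall strategy is the right one, but the first step contains a genuine error. You claim that the injection $H^{-1}(Q)\hookrightarrow A$ must land in the torsion of $A$ because $\mu_{\max}(H^{-1}(Q))\le s<\mu_{\min}(A^{\mathrm{tf}})$. This is the slope inequality applied in the wrong direction: for Mumford-semistable sheaves $F,G$ with $\mu(F)<\mu(G)$ one has $\Hom(G,F)=0$, \emph{not} $\Hom(F,G)=0$. Concretely, with $s=-\tfrac12$ the sheaf $\OO_{\P^2}(-1)$ lies in $\cF_s$, the sheaf $\OO_{\P^2}$ lies in $\cQ_s$, and the obvious inclusion $\OO_{\P^2}(-1)\hookrightarrow\OO_{\P^2}$ is nonzero with torsion-free target. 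So $H^{-1}(Q)$ need not vanish, and a subobject $A\hookrightarrow E$ in $\cA_s$ is \emph{not} in general a subsheaf of $E$; only its image $A'=A/H^{-1}(Q)$ is. As a consequence your later claim $0<r_A<r_E$ in the uniformity step is also unjustified: one can have $r_A>r_E$.

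The fix is not hard, and your subsequent steps survive it. Once you know $A\in\cQ_s$ is a sheaf, write $0\to K\to A\to A'\to 0$ with $K=H^{-1}(Q)\in\cF_s$ and $A'\subset E$ a genuine subsheaf. Mumford-semistability of $E$ gives $\mu(A')\le\mu(E)$, and since $\mu(K)\le\mu_{\max}(K)\le s<\mu(E)$, additivity of $c_1$ and rank yields $\mu(A)\le\mu(E)$ as well (with strict inequality whenever $K\ne 0$). From there your leading-order-in-$t$ comparison and the wall-finiteness argument go through essentially as you wrote them; the point is just that the needed inequality $\mu(A)\le\mu(E)$ holds for a slightly more subtle reason than ``$A$ is a subsheaf.''
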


Conversely, it can be seen that if $E^\bullet\in \cA_s$ has $\ch_0(E^\bullet)\geq 0$ and $E^\bullet$ is $(s,t)$-semi-stable for large $t$, then in fact $H^0(E^\bullet)\in \cQ_s$ is a Mumford-semistable sheaf and $H^{-1}(E^\bullet)=0$.  Thus $E^\bullet$ is isomorphic to a Mumford-semistable sheaf in $\cQ_s$.  We conclude that if $s< c/r$ and $t \gg 0$ then the moduli space of semistable objects of $\cA_{s,t}$ with Chern character $(r,c,d)$ is just the ordinary moduli space of Mumford-semistable coherent sheaves.  In particular, if $s<0$ and $t\gg 0$, the moduli space of $(s,t)$-semistable objects of $\cA_s$ with Chern character $(1,0,-n)$ is isomorphic to $\P^{2[n]}$ for large $t$.

To understand the birational geometry of $\P^{2[n]}$, we study the problem of understanding how the moduli space of $(s,t)$-semistable objects of $\cA_s$ with Chern character $(1,0,-n)$ varies as $(s,t)$ varies in the quadrant  $\{s<0,t>0\}$.  When the collection of semistable objects changes, it is due to $(s,t)$ crossing a \emph{potential wall} where some ideal sheaf $I_Z$ is destabilized.  Precisely, for two Chern characters $(r,c,d)$ and $(r',c',d')$, the corresponding potential wall is the subset $$W_{(r,c,d),(r',c',d')} = \{(s,t): \mu_{s,t}(r,c,d) = \mu_{s,t}(r',c',d')\}.$$ When $E$ is a bundle with Chern character $(r,c,d)$, we frequently write $W_{E,(r',c',d')}$ for the preceding wall, and similarly with the second argument.  
If $E'\to E$ is an inclusion of objects of $\cA_s$, then $E$ is potentially destabilized as $(s,t)$ crosses the wall $W_{E,E'}$.  In fact, elementary calculus shows that if $(r,c,d)$ and $(r',c',d')$ are not proportional then on one side of the wall we have $\mu_{s,t}(E')>\mu_{s,t}(E)$ and on the other we have $\mu_{s,t}(E)>\mu_{s,t}(E')$.  In particular, $E$ cannot be semistable on both sides of the wall, but it could potentially be unstable on both sides.

The geometry of the walls we must consider is particularly nice.  Fix a Mumford-stable sheaf $E$ with Chern character $(r,c,d)$, and consider the family of walls $W_{E,(r',c',d')}$ as $(r',c',d')$ varies.  One wall is the vertical line $s = c/r = \mu(E)$, corresponding to $(r',c',d')$ with the same slope $c'/r' = \mu(E)$.  The other walls form two nested families of semicircles on either side of the vertical wall, with each semicircle centered on the $s$-axis in the $st$-plane.  The center is positioned at the point $$\left(\frac{rd'-r'd}{rc'-r'c},0\right),$$ and it has radius $$\sqrt{\left(\frac{r d'-r'd}{rc'-r'c}\right)^2-2\left(\frac{cd'-c'd}{rc'-r'c}\right)}.$$

The formula for the radius of a wall becomes more transparent when one looks at a Mumford-stable sheaf $E$ of slope $c/r=0$.  In this case, the Bogomolov inequality gives $d\leq 0$.  If we let $$x= \frac{rd'-r'd}{r c'},$$ then the wall $W_{E,(r',c',d')}$ has center $(x,0)$ and radius $$\sqrt{x^2+\frac{2 d}{r}} \leq |x|.$$ Noting that $2d/r$ is a fixed nonpositive number, we observe the following basic fact.  

\begin{lemma}\label{nestingByCenters} The radius of a semicircular wall to the left of the vertical wall decreases as the center moves to the right, toward the vertical wall.  Similarly, the radius of a wall to the right of the vertical wall decreases as the center moves to the left. 
\end{lemma}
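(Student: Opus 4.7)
The plan is to simply read off the claim from the explicit formulas for the center and radius of a semicircular wall derived just above the lemma statement. Since the assertion is symmetric under the reflection $s \mapsto -s$, it suffices to treat walls to the left of the vertical wall, say for a fixed Mumford-stable reference sheaf $E$ of slope $0$; any wall not equal to the vertical one then has center $(x,0)$ with $x\neq 0$ and radius $\rho(x) = \sqrt{x^2 + \tfrac{2d}{r}}$, where $(r,c,d)=\ch(E)$.

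First I would recall from Bogomolov's inequality (cited in the discussion preceding the lemma) that $d\leq 0$ when $c=0$ and $E$ is Mumford-stable. Setting $k = -\tfrac{2d}{r}\geq 0$, the radius becomes $\rho(x)=\sqrt{x^2-k}$, so a wall exists exactly when $x^2\geq k$, i.e.\ $x\leq -\sqrt{k}$ for left-hand walls and $x\geq \sqrt{k}$ for right-hand walls. In particular the collection of admissible centers on the left is the interval $(-\infty,-\sqrt{k}]$.

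Now $\rho(x)^2 = x^2-k$ is strictly decreasing in $x$ on $(-\infty,0]$, hence strictly decreasing on $(-\infty,-\sqrt{k}]$. Thus as the center $x$ of a left-hand wall moves to the right toward the vertical wall at $s=0$, the radius strictly decreases, which is the first assertion. The argument for right-hand walls is identical, using instead that $\rho(x)^2=x^2-k$ is strictly decreasing as $x$ moves leftward on $[\sqrt{k},\infty)$.

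There is no real obstacle here; the only thing one must be slightly careful about is that the nesting property concerns centers, not, for example, the left-most points of the semicircles, so one should verify that any two admissible left-hand walls are genuinely nested rather than merely overlapping. This follows from $\rho(x)\leq |x|$ (which is the inequality already recorded in the excerpt): if $x_1<x_2\leq -\sqrt{k}$ then the interval $[x_2-\rho(x_2),x_2+\rho(x_2)]$ lies in $[x_1-\rho(x_1),x_1+\rho(x_1)]$, because $x_2+\rho(x_2)\leq 0$ and a direct comparison of squared radii with squared differences of centers, using $\rho(x)^2=x^2-k$, yields the containment. This confirms the geometric picture of nested semicircles with decreasing radii as the centers approach the vertical wall.
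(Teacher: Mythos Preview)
Your proposal is correct and follows exactly the paper's approach: the paper does not give a formal proof of this lemma, but simply records it as an immediate observation from the formula $\rho=\sqrt{x^2+2d/r}$ with $2d/r\leq 0$ fixed, derived just before the statement. Your write-up spells this out (and even goes a bit further by verifying the genuine nesting of the semicircles), but the core idea is identical.
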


The restriction that $\mu(E)=0$ in the preceding discussion is not essential; formally twisting by $-\mu(E)$ shifts all the walls by $\mu(E)$, so the general case follows from this.  Thus in order to show one wall $W_{E,(r',c',d')}$ is nested in another $W_{E,(r'',c'',d'')}$ it is enough to show both walls lie on the same side of the vertical wall and that the center of the first wall is closer to the vertical wall than the center of the second wall is.  This fact can be useful, as the expression for the radius is far more complicated than the expression for the center.

If $E$ is a Mumford-stable sheaf in $\cQ_{s_0}$, then we say an injection $F\to E$ in the category $\cA_{s_0}$ \emph{destabilizes} $E$ at $(s_0,t_0)$ if $E$ is $(s_0,t_0)$-semistable and $(s_0,t_0)$ lies on the wall $W_{E,F}$, so that $E,F$ have the same $(s_0,t_0)$-slope.  In this case, for every $(s,t)\in W_{E,F}$ the map $F\to E$ destabilizes $E$ at $(s,t)$; in particular, $E$ and $F$ are in $\cA_s$.  Thus we say $F$ destabilizes $E$ along the wall $W_{E,F}$.  Recall that $\mu_{s,t}(F)>\mu_{s,t}(E)$ for all $(s,t)$ on one side of the wall and $\mu_{s,t}(F)<\mu_{s,t}(E)$ for all $(s,t)$ on the other side of the wall; since $E$ is Mumford-stable we see that in fact $E$ is $(s,t)$-stable for all $(s,t)$ outside of the wall and $E$ is not $(s,t)$-semistable for any $(s,t)$ inside the wall.

We may now describe the conjectural correspondence between Bridgeland and Mori walls for $\P^{2[n]}$ discussed in \cite{ABCH}.  Let $I_Z$ be the ideal sheaf of some $Z\in \P^{2[n]}$, and consider the family of walls $W_{I_Z,(r',c',d')}$.  We call a wall in this family a \emph{Bridgeland wall} for the Hilbert scheme if some ideal sheaf $I_Z$ is destabilized along the wall.  With $n$ fixed, Bridgeland walls only depend on their centers, so we denote the Bridgeland wall with center $(x,0)$ by $W_x$.  On the other hand, a Mori wall is a ray $H + \frac{1}{2y}\Delta$ in $\Eff \P^{2[n]}$ where the stable base locus of a divisor in the ray changes; such walls depend only on the parameter $y<0$.

\begin{conjecture}
There is a one-to-one correspondence between Bridgeland walls $W_x$ and Mori walls $H+\frac{1}{2y}\Delta$ for $\P^{2[n]}$ given by the transformation $$x = y - \frac{3}{2}.$$
\end{conjecture}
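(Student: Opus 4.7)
The plan is to set up the correspondence at the level of numerical invariants and then promote it to an honest bijection of walls using the generalized Gaeta resolution. For each Chern character $(r',c',d')$ arising from a putative destabilizing subobject of an ideal sheaf $I_Z$ of $n$ points, the Bridgeland wall $W_{I_Z,(r',c',d')}$ has center $x = (d'+nr')/c'$. On the Mori side, the same Chern character determines a Brill--Noether locus $D_{(r',c',d')} \subset \P^{2[n]}$ consisting of those $Z$ for which there is a nontrivial morphism (in the relevant heart) from a sheaf or complex with Chern character $(r',c',d')$ into $I_Z$. A direct Grothendieck--Riemann--Roch computation produces the class of $D_{(r',c',d')}$ in the form $aH + b\Delta$, and rewriting this ray as $H + \tfrac{1}{2y}\Delta$, the identity $x = y - \tfrac{3}{2}$ should fall out as a short algebraic consequence of the two normalizations.

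For the forward direction (Bridgeland wall $\Rightarrow$ Mori wall), I would use Theorem \ref{resTheorem}: the generalized Gaeta resolution produces concrete destabilizers of the general $I_Z$ built from the exceptional bundles $E_{-(\alpha.\beta)}$, $E_{-\alpha-3}$, $E_{-\beta}$ and their twists. For each such destabilizer, the corresponding Brill--Noether divisor is the pullback of a generator of $\Pic(Kr(\udim e))$ under the rational map $\pi:\P^{2[n]} \dashrightarrow Kr(\udim e)$ of Section \ref{effConeSection}, and it has extremal class in the Mori chamber it bounds. Since $I_Z$ has slope $0$, all relevant Bridgeland walls lie in the half-plane $\{s<0\}$ and are nested in a single linear order by Lemma \ref{nestingByCenters}; inductively moving inward and applying the appropriate analogue of Theorem \ref{resTheorem} to the successive birational models of $\P^{2[n]}$, one would match each Bridgeland wall to a Mori wall.

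The main obstacle, and the bulk of the work, is the converse: that every Mori wall arises from a Bridgeland wall. This requires a modular interpretation of \emph{every} birational contraction of $\P^{2[n]}$. A natural strategy is to produce, for each Bridgeland stability condition, a nef class on the corresponding moduli of complexes and transport the Bridgeland chamber structure to the Mori side via a positivity principle; alternatively one analyzes the nef cone of each successive birational model directly and matches it against the Bridgeland walls. The chief difficulty is ruling out phantom Mori walls at which the stable base locus changes without any corresponding destabilization in the derived category. Even granted Theorem \ref{resTheorem}, controlling this appears to require either a general positivity theorem linking stability conditions to nef classes or a systematic case analysis of all destabilizing subobjects for each permitted Chern character, with the interpolation results of Theorem \ref{exceptionalInterpThm} ensuring that each candidate wall actually produces a jump in the base locus.
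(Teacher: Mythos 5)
This statement is a \emph{conjecture} (imported from \cite{ABCH}), and the paper does not prove it; what the paper actually establishes is a consistency check at a single wall, namely Theorem \ref{bridgelandThmUnrefined}: the \emph{collapsing} (innermost) Bridgeland wall, where the general ideal sheaf is destabilized, has center $x=-(\mu+\tfrac32)$, matching the extremal ray $\mu H-\tfrac12\Delta$ of $\Eff(\P^{2[n]})$ computed in Section \ref{effConeSection} under $x=y-\tfrac32$. Your proposal is a research program rather than a proof, and it does not close either direction. Concretely: your ``forward direction'' relies on Theorem \ref{resTheorem}, but that theorem only resolves the \emph{general} ideal sheaf, so it only identifies the destabilizing subobject along the collapsing wall. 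The intermediate Bridgeland walls are precisely where \emph{special} ideal sheaves are destabilized, and for those the paper provides no resolution data and no candidate destabilizers; the ``appropriate analogue of Theorem \ref{resTheorem} for successive birational models'' that your induction invokes is exactly the missing content, not an available tool. Likewise, Lemma \ref{nestingByCenters} orders potential walls for a fixed Chern character by their centers, but it does not tell you which potential walls are actual walls, nor that each actual wall produces a change of stable base locus.

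The converse direction you correctly flag as the main obstacle, but flagging it is not resolving it: ruling out ``phantom'' Mori walls requires a modular interpretation of every birational model of $\P^{2[n]}$, or a positivity theorem attaching nef divisors to Bridgeland stability conditions, neither of which is established in this paper (the latter is the subject of \cite{BayerMacri2}, cited but not used for this purpose here). What can be salvaged from your outline is the numerical dictionary: with $\ch(I_Z)=(1,0,-n)$ the center of $W_{I_Z,(r',c',d')}$ is indeed $x=(d'+nr')/c'$, and the Riemann--Roch computation of $[D_E(n)]=c_1(E)-\rk(E)\tfrac{\Delta}{2}$ does make $x=y-\tfrac32$ a formal identity once a destabilizer is matched to a divisor class; this is exactly the computation the paper carries out in the proof of Theorem \ref{bridgelandThm}, but only for the extremal wall, where the divisor is known to be effective and extremal by Theorem \ref{effConeThm1} and the destabilizer is supplied by Theorem \ref{resTheorem}. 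Beyond that wall, the correspondence remains conjectural.
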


The \emph{collapsing wall} is the Bridgeland wall where the general ideal sheaf $I_Z$ is destabilized; it is the innermost Bridgeland wall.  The proof of the next theorem will occupy the rest of this section.

\begin{theorem}\label{bridgelandThmUnrefined}
The center $(x,0)$ of the collapsing wall for $\P^{2[n]}$ corresponds to the nontrivial edge $\mu H - \frac{1}{2}\Delta$ of $\Eff(\P^{2[n]})$ by $$x = -\left(\mu+\frac{3}{2}\right).$$
\end{theorem}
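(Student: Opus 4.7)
The plan is to use the generalized Gaeta resolution of Theorem \ref{resTheorem} to exhibit an explicit destabilization of the general $I_Z$ along a specific wall $W_{x_0}$, compute $x_0$ in closed form, and then argue that no wall nested strictly inside $W_{x_0}$ destabilizes $I_Z$.

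First I would handle the existence half, treating the case $\mu < \alpha.\beta$ in detail; the other two cases are analogous. The sheaf exact sequence
$$0 \to W \to E_{-(\alpha.\beta)}^{m_3} \to I_Z \to 0$$
of Theorem \ref{resTheorem} becomes, after tilting at any $s$ with $\mu(W) \leq s < -(\alpha.\beta)$, a short exact sequence
$$0 \to E_{-(\alpha.\beta)}^{m_3} \to I_Z \to W[1] \to 0$$
in $\cA_s$, by the long exact cohomology sequence applied to the triangle in $D^b(\P^2)$. Hence $E_{-(\alpha.\beta)}^{m_3}$ is a subobject of $I_Z$ destabilizing $I_Z$ along the numerical wall $W_{I_Z, E_{-(\alpha.\beta)}}$. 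Substituting the Chern characters $(1,0,-n)$ and $(r_{\alpha.\beta},\,-(\alpha.\beta)r_{\alpha.\beta},\,r_{\alpha.\beta}(\tfrac12(\alpha.\beta)^2 - \Delta_{\alpha.\beta}))$ into the wall-center formula gives
$$x_0 = -\frac{\tfrac12(\alpha.\beta)^2 - \Delta_{\alpha.\beta} + n}{\alpha.\beta}.$$
Solving $\gamma(\mu) = n$ on the interval $(\alpha.\beta - x_{\alpha.\beta}, \alpha.\beta)$ with the piecewise-linear formula for $\gamma$ from Proposition \ref{gammaProp} yields $\mu + 3 = \frac{n - 1 + P(\alpha.\beta) - \Delta_{\alpha.\beta}}{\alpha.\beta}$, and combining this with the identity $P(\alpha.\beta) - 1 = \tfrac12(\alpha.\beta)(\alpha.\beta+3)$ quickly gives $\mu + 3/2 = -x_0$. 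The cases $\mu > \alpha.\beta$ and $\mu = \alpha.\beta$ go through the same way using $E_{-(\alpha.\beta)-3}$ and the full triad resolution respectively, and all three reduce to the same closed form.

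The main obstacle is the uniqueness half: showing $W_{x_0}$ is actually the innermost Bridgeland wall, not merely \emph{a} Bridgeland wall. Equivalently, one must exclude destabilizing sequences $0 \to A \to I_Z \to B \to 0$ in $\cA_s$ whose numerical wall $W_{x'}$ satisfies $x_0 < x' < 0$. My plan is the following: the wall equation $\mu_{s,t}(A) = \mu_{s,t}(I_Z)$ together with the Bogomolov inequality for $A$ sharply constrains the Chern character of any putative destabilizer $A$ along a wall nested strictly inside $W_{x_0}$, and a direct numerical analysis should reduce the candidates to multiples of exceptional bundles (up to shift and twist). It then suffices to show that each such candidate fails to be a subobject of $I_Z$ in $\cA_s$ along its candidate wall, which reduces to showing that the relevant exceptional bundles are not Bridgeland-semistable on the relevant walls. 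This is exactly the stability classification for exceptional bundles developed in Section \ref{bridgelandSec2}, which I would invoke to close out the proof. Genericity of $Z$ is then used to rule out any sporadic destabilizers that might exist only over proper closed subsets of $\P^{2[n]}$.
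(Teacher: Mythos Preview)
Your existence half matches the paper's opening move exactly: use the resolution of Theorem \ref{resTheorem} to exhibit $E_{-(\alpha.\beta)}^{m_3} \hookrightarrow I_Z$ in $\cA_s$ and compute the center of $W_{I_Z,E_{-(\alpha.\beta)}}$ as $x_0 = -(\mu+\tfrac32)$.

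But your ``uniqueness half'' misidentifies the remaining obstacle, and this is a genuine gap.  Exhibiting a subobject $A \subset I_Z$ with $\mu_{s,t}(A) = \mu_{s,t}(I_Z)$ along $W_{x_0}$ does \emph{not} yet show that $I_Z$ is destabilized along $W_{x_0}$: by the paper's definition, destabilization requires that $I_Z$ be $(s,t)$-\emph{semistable} along the wall.  What you have shown so far is only that $I_Z$ is unstable strictly \emph{inside} $W_{x_0}$, hence that the collapsing wall contains $W_{x_0}$ (i.e.\ has center $x_c \le x_0$).  The outstanding task is therefore to rule out destabilizing walls \emph{outside} $W_{x_0}$ (centers $x' < x_0$), not inside; your inequality $x_0 < x' < 0$ goes the wrong way.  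Moreover, once semistability of $I_Z$ along $W_{x_0}$ is established, the fact that $W_{x_0}$ is innermost is automatic: the general $I_Z$ is unstable inside, so no inner semicircle can be a Bridgeland wall.

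The paper's route to semistability along $W_{x_0}$ is quite different from your proposed numerical sieve.  Rather than classifying all potential destabilizers $A$ of $I_Z$, the paper shows directly that both pieces of the extension
\[
0 \to E_{-(\alpha.\beta)}^{m_3} \to I_Z \to W[1] \to 0
\]
are themselves $(s,t)$-semistable along $W_{x_0}$; since they have the same slope there, $I_Z$ is semistable as an extension of semistables.  The key numerical input is that $W_{x_0}$ has radius $\rho > \sqrt{5}/2$ (from $\rho^2 = 2\delta(\mu) + \tfrac14 > \tfrac54$), while Section \ref{bridgelandSec2} shows that every exceptional bundle, and the object $W[1]$, is semistable outside a wall of radius strictly less than $\sqrt{5}/2$.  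So the stability results of Section \ref{bridgelandSec2} are used to certify the semistability of the \emph{pieces of the resolution}, not to eliminate a list of candidate destabilizers of $I_Z$.  Your proposed ``direct numerical analysis'' reducing all possible $A$ to exceptional bundles is not carried out in the paper and would be substantially harder to justify.
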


\subsection{The destabilizing object for a general ideal sheaf}

To prove Theorem \ref{bridgelandThmUnrefined}, we need to identify the collapsing wall and verify the relation between its center and the edge of the effective cone.  It is relatively easy to specify what the collapsing wall is; the difficult part is to show that the general ideal sheaf $I_Z$ is actually semistable along the collapsing wall.  Here we describe the collapsing wall, and leave the proof of semistability of the general ideal sheaf for the next section.

Let $\mu$ be the minimum slope of a stable bundle $V$ with $\chi/r=n$, and let $\alpha.\beta$ be the associated exceptional slope to $\mu$, as in Section \ref{resolutionSection}.  Assume for now that $\mu<\alpha.\beta$. Let $I_Z$ be a general ideal sheaf of $n$ points. By Theorem \ref{resTheorem} there is a distinguished triangle $$W^{\bullet}\to E_{-(\alpha.\beta)} \te \Hom(E_{-(\alpha.\beta)},I_Z)\to I_Z \to$$ where $W^\bullet$ is the complex $$E_{-\alpha-3}^{m_1}\to E_{-\beta}^{3r_{\alpha.\beta}m_1-m_2}.$$ The shift $$E_{-(\alpha.\beta)}\te \Hom(E_{-(\alpha.\beta),I_Z})\to I_Z\to W^{\bullet}[1]\to$$ is then also a distinguished triangle.  In case $\chi(E_{\alpha.\beta}\te I_Z)r_{\alpha.\beta}\geq 3$, so that $W = W^\bullet$ is actually a stable vector bundle, we observe that $\mu(W) < \mu(E_{-(\alpha.\beta)})$ since $c_1(W) = c_1(E_{-(\alpha.\beta)})<0$ and $\rk(W) = \rk(E_{-(\alpha.\beta)})-1.$  Thus if $\mu(W) < s < \mu(E_{-(\alpha.\beta)})$ we have $E_{-(\alpha.\beta)}\in \cQ_s$ and $W \in \cF_s$, so $W[1]\in \cA_s$.  Thus all the terms in the above triangle are in $\cA_s$, and we have an exact sequence $$0\to E_{-(\alpha.\beta)}\te \Hom(E_{-(\alpha.\beta),I_Z})\to I_Z\to W[1]\to 0$$ in the category $\cA_s$. The cases with $\chi(E_{\alpha.\beta}\te I_Z)r_{\alpha.\beta}\leq 2$ are easily handled case by case, and we arrive at the same exact sequence if $W$ is interpreted as a complex.  Treating the cases where $\mu> \alpha.\beta$ and $\mu = \alpha.\beta$ similarly, it is natural to expect the following theorem is true.

\begin{theorem}\label{bridgelandThm}
Let $Z\in \P^{2[n]}$ be general.  When $\mu<\alpha.\beta$ or $n$ is of the form ${r+2\choose 2}-1$, the canonical homomorphism $$E_{-(\alpha.\beta)}\te \Hom(E_{-(\alpha.\beta)},I_Z)\to I_Z$$ is a destabilizing subobject of $I_Z$.  

If $\mu > \alpha.\beta$, the canonical homomorphism $$I_Z \to E_{-(\alpha.\beta)-3}[1]\te \Ext^{1}(I_Z,E_{-(\alpha.\beta)-3})^*$$ is a destabilizing quotient object of $I_Z$.  In other words, this map is surjective in appropriate categories $\cA_s$, and its kernel $W$ is a destabilizing subobject.

If $\mu = \alpha.\beta$ and $n$ is not of the form ${r+2\choose 2}-1$, the canonical homomorphisms $$E_{-\beta}\te \Hom(E_{-\beta},I_Z)\to I_Z$$ and $$I_Z\to E_{-\alpha-3}[1]\te \Ext^1(I_Z,E_{-\alpha-3})^*$$ are destabilizing sub- and quotient objects of $I_Z$, respectively.

In each case, the center $(x,0)$ of the corresponding wall is given by $x = -(\mu+\frac{3}{2})$.
\end{theorem}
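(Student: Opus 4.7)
The plan is to deduce the destabilizing sequences directly from the canonical resolutions of Theorem~\ref{resTheorem}, then to compute the wall center by a Chern character calculation and match it against the piecewise-linear formula for $\gamma$ on the interval $I_{\alpha.\beta}$ established in Section~\ref{gammaSection}. Write $\nu = \alpha.\beta$ throughout.

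In the case $\mu < \nu$, Theorem~\ref{resTheorem} provides the short exact sequence $0 \to W \to E_{-\nu} \te \Hom(E_{-\nu},I_Z) \to I_Z \to 0$ in $\Coh(\P^2)$. Since $W$ is stable with $c_1(W)=c_1(E_{-\nu})$ and $\rk(W)=\rk(E_{-\nu})-1$, one has $\mu(W)<-\nu$; for $s\in(\mu(W),-\nu)$ one therefore has $W\in\cF_s$ and $E_{-\nu},I_Z\in\cQ_s$. Rotating the associated distinguished triangle then gives a triangle $E_{-\nu}\te\Hom(E_{-\nu},I_Z)\to I_Z\to W[1]\to$ with all three terms in $\cA_s$, hence a short exact sequence in $\cA_s$ exhibiting the canonical map as a subobject of $I_Z$. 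The ``derived'' cases of Theorem~\ref{resTheorem} (where $W$ is only a complex) are handled identically, since the rotation is performed in $D^b(\Coh\P^2)$. For $\mu>\nu$, the resolution $0\to E_{-\nu-3}\te \Ext^1(I_Z,E_{-\nu-3})^*\to W\to I_Z\to 0$ rotates to $W\to I_Z\to E_{-\nu-3}[1]\te \Ext^1(I_Z,E_{-\nu-3})^*\to$; choosing $s\in[-\nu-3,\mu(W))$ puts $E_{-\nu-3}$ in $\cF_s$ and $W,I_Z$ in $\cQ_s$, yielding a short exact sequence in $\cA_s$ that exhibits the claimed quotient. The case $\mu=\nu$ follows directly from part (3) of Theorem~\ref{resTheorem}, which supplies a single two-step resolution having both the sub- and quotient-object as its outer terms.

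For the center computation, Riemann-Roch on $\P^2$ gives $\ch(E_{-\nu})=(r_\nu,\,-\nu r_\nu,\,r_\nu(\nu^2/2-\Delta_\nu))$, so the wall-center formula $x=(rd'-r'd)/(rc'-r'c)$ from the excerpt reduces to
$$x = -\frac{\nu}{2}+\frac{\Delta_\nu-n}{\nu}$$
when paired against $\ch(I_Z)=(1,0,-n)$. On the other hand, the piecewise formula for $\gamma$ from Proposition~\ref{gammaProp} shows that for $\mu\in(\nu-x_\nu,\nu]$ the equality $n=\gamma(\mu)=\nu\mu+3\nu/2-\nu^2/2+\Delta_\nu$ holds, and rearranging gives precisely $x=-(\mu+3/2)$. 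The case $\mu>\nu$ runs parallel, using $\ch(E_{-\nu-3})=(r_\nu,-(\nu+3)r_\nu,r_\nu((\nu+3)^2/2-\Delta_\nu))$ together with the linear formula $\gamma(\mu)=(\nu+3)\mu+1+\Delta_\nu-P(\nu)$ valid on $[\nu,\nu+x_\nu)$.

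The main obstacle is verifying that the rotated triangles really are short exact sequences in the heart $\cA_s$, i.e.\ that $W$ (or its shift) lies in the correct half of the torsion pair throughout an appropriate range of $s$-values. This amounts to checking $\mu(W)<-\nu$ in case (1) and $\mu(W)>-\nu-3$ in case (2); both inequalities follow from the semi-exceptional resolutions of $W$ supplied by Theorem~\ref{resTheorem} together with the inequalities on $m_1/m_2$ recorded in Proposition~\ref{orthoResProp}. With this technical point verified, the remaining content is the two algebraic identities matching the wall center to $-(\mu+3/2)$, which are routine manipulations using Lemma~\ref{numericalProps} and the explicit formula for $\gamma$.
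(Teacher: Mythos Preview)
Your proposal has a genuine gap: you never verify that $I_Z$ is actually \emph{semistable} along the wall.  Recall the definition from Section~\ref{bridgelandSection}: a subobject $F\hookrightarrow I_Z$ in $\cA_s$ is \emph{destabilizing} only if $I_Z$ is $(s,t)$-semistable at the point $(s,t)$ on the wall $W_{I_Z,F}$.  Producing an exact sequence $0\to E_{-\nu}^{m_3}\to I_Z\to W[1]\to 0$ in $\cA_s$ with the sub- and quotient having equal slope on the wall is necessary, but it only shows that $I_Z$ is \emph{unstable} strictly inside the wall.  To conclude it is semistable \emph{on} the wall you must know that both $E_{-\nu}$ and $W[1]$ are themselves $(s,t)$-semistable along that wall, so that $I_Z$ is an extension of semistables of equal slope.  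This is precisely steps (2) and (3) in the paper's outline, and it is the content of all of Section~\ref{bridgelandSec2}: Theorem~\ref{excBridgeThm} handles the exceptional bundle, and the ``End of proof'' paragraph handles $W[1]$.  Without this, nothing rules out the possibility that $I_Z$ was already destabilized by some other subobject along a larger wall.

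A secondary gap: you set up the exact sequence in $\cA_s$ for $s\in(\mu(W),-\nu)$, but you never check that the wall $W_{I_Z,E_{-\nu}}$ actually lies in this strip (or is even nonempty).  The paper verifies both: the center $x=-(\mu+3/2)$ satisfies $\mu(W)<x<-\nu$ by explicit estimates, and the radius satisfies $\rho^2=2\delta(\mu)+1/4>5/4$.  This radius bound is then matched against Corollary~\ref{root5Cor}, which shows that the walls where $E_{-\nu}$ and $W[1]$ lose semistability have radius strictly less than $\sqrt 5/2$, hence are nested inside $W_{I_Z,E_{-\nu}}$.  Your ``main obstacle'' (heart membership of $W$) is in fact the easy part---it follows immediately from $c_1(W)=c_1(E_{-\nu}^{m_3})$ and $\rk(W)=\rk(E_{-\nu}^{m_3})-1$, as the paper notes---and the real work lies in the semistability analysis you have omitted.
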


Proving even one of the three cases of the theorem takes a large amount of calculation, so we focus exclusively on the case $\mu<\alpha.\beta$, and further assume $\chi(E_{\alpha.\beta}\te I_Z)r_{\alpha.\beta}\geq 3$.  Verifying the other cases would be a good exercise to become comfortable with the arithmetic of exceptional slopes and Bridgeland stability.  Let us point out the following general fact before beginning the proof:  given any exact sequence of sheaves $$0\to A\to B\to C\to 0,$$ there is an equality $W_{A,B}=W_{B,C}=W_{C,A}$.  These several descriptions of a given wall are frequently useful.

\begin{proof}[Proof for $\mu<\alpha.\beta$ and $\chi(E_{\alpha.\beta}\te I_Z)r_{\alpha.\beta}\geq 3$]
Consider the exact sequence $$0\to E_{-(\alpha.\beta)}\te \Hom(E_{-(\alpha.\beta)},I_Z)\to I_Z\to W[1]\to 0,$$ valid in any category $\cA_s$ with $\mu(W)<s<\mu(E_{-(\alpha.\beta)})$.  We must show the following facts:
 \begin{enumerate}
 \item The wall $W_{I_Z,E_{-(\alpha.\beta)}}=W_{I_Z,W}=W_{E_{-(\alpha.\beta)},W}$ is nonempty and lies between the vertical lines $s = \mu(W)$ and $s= -(\alpha.\beta)$.

\item The sheaf $E_{-(\alpha.\beta)}$ is $(s,t)$-semistable along the wall $W_{I_Z,E_{-(\alpha.\beta)}}$.

\item The object $W[1]$ is $(s,t)$-semistable along the wall $W_{I_Z,W}$.
 \end{enumerate}
 
 From (2) and (3) it follows that $I_Z$ is semistable along the wall, since it is an extension of semistable objects of the same slope.
 
Let us prove (1).  Since walls $W_{(r,c,d),E_{-(\alpha.\beta)}}$ fall into two nested families of semicircles on either side of the vertical wall $s = -(\alpha.\beta)$, to show that the wall $W_{I_Z,E_{-(\alpha.\beta)}}$ lies to the left of $s= -(\alpha.\beta)$ it is enough to show that its center lies to the left of this line (provided it is nonempty).  For any exceptional slope $\alpha\in \F E$ we have $$\ch(E_\alpha) = \left(r_\alpha, r_\alpha \alpha, r_{\alpha}\left(\frac{1}{2} \alpha^2-\Delta_\alpha\right)\right),$$ so the center of the wall $W_{I_Z,E_{-(\alpha.\beta)}}$ is positioned at the point $(x,0)$ with $$x = \frac{\ch_2(E_{-(\alpha.\beta)})+\ch_0(E_{-(\alpha.\beta)})n}{\ch_1(E_{-(\alpha.\beta)})}=-\frac{\alpha.\beta}{2}+\frac{\Delta_{\alpha.\beta}}{\alpha.\beta}-\frac{n}{\alpha.\beta} = -\left(\mu + \frac{3}{2}\right),$$ using the fact that $$n = \gamma(\mu) = (\alpha.\beta)(\mu+3)+1+\Delta_{\alpha.\beta}-P(\alpha.\beta).$$
 Thus the statement that $x<-(\alpha.\beta)$ is equivalent to the inequality
$\mu > \alpha.\beta - \frac{3}{2},$ which is obvious since $\alpha.\beta$ is the associated exceptional slope to $\mu$.  We conclude that if $W_{I_Z,E_{-(\alpha.\beta)}}$ is nonempty, it lies to the left of the vertical line $s = -(\alpha.\beta)$.  Furthermore, the distance between the center of $W_{I_Z,E_{-(\alpha.\beta)}}$ and this line is $-(\alpha.\beta)-x=\frac{3}{2}-\alpha.\beta+\mu<3/2$.

Likewise, to see the wall $W_{I_Z,W}$ lies to the right of the wall $s = \mu(W)$, it suffices to see $x > \mu(W)$. By our final observation in the previous paragraph, we can show the distance $\mu(E_{-(\alpha.\beta)})-\mu(W)$ between the two vertical walls exceeds $3/2$.  We have $$\mu(E_{-(\alpha.\beta)})-\mu(W) = -(\alpha.\beta) + \frac{m_3 r_{\alpha.\beta}(\alpha.\beta)}{m_3 r_{\alpha.\beta}-1}=\frac{\alpha.\beta}{m_3 r_{\alpha.\beta}-1}.$$ Since $$m_3 = \chi(E_{\alpha.\beta}\te I_Z) = \chi_{\alpha.\beta} - n r_{\alpha.\beta}= \chi_{\alpha.\beta}-\gamma(\mu)r_{\alpha.\beta}$$ we see this distance exceeds the number $$\frac{\alpha.\beta}{(\chi_{\alpha.\beta} - \gamma(\alpha.\beta - x_{\alpha.\beta})r_{\alpha.\beta})r_{\alpha.\beta}-1}= \frac{1}{x_{\alpha.\beta}r_{\alpha.\beta}^2}.$$ This final quantity depends only on the integer $r_{\alpha.\beta}$, and is an increasing function of $r_{\alpha.\beta}$.  Already for $r_{\alpha.\beta}=1$ it equals $\varphi + 1 \approx 2.618$, so $W_{I_Z,W}$ must lie to the right of $s= \mu(W)$ if it is nonempty.

Finally let us show this wall is actually nonempty.  The radius $\rho$ of this wall satisfies $$\rho^2 = x^2 - 2n = \left(\mu+\frac{3}{2}\right)^2-2\gamma(\mu)=2P(\mu)+\frac{1}{4}-2 \gamma(\mu)=2\delta(\mu)+\frac{1}{4}>\frac{5}{4}$$ since $\delta(\mu)>1/2$ for all $\mu\in \Q$.  Thus the radius is at least $\sqrt{5}/2$; in particular the wall is nonempty.

To complete the proof, it will be sufficient to show that $E_{-(\alpha.\beta)}$ and $W[1]$ are $(s,t)$-semistable outside of semicircular walls of radius at most $\sqrt{5}/2$.  We will prove this in the next section.
\end{proof}

\section{Bridgeland stability of exceptional bundles}\label{bridgelandSec2}

In this section we investigate the Bridgeland semistability of exceptional bundles in order to complete the proof of Theorem \ref{bridgelandThm}. Our main result ensures that the locus of $(s,t)$ where an exceptional bundle is not $(s,t)$-semistable is not ``too large.''

\begin{theorem}\label{excBridgeThm}
Let $\alpha,\beta$ be exceptional slopes of the form $$\alpha = \varepsilon\left(\frac{p\vphantom1}{2^q}\right) \qquad \beta = \varepsilon\left(\frac{p+1}{2^q}\right),$$ where $p$ is even.  The exceptional bundle $E_{\beta}$ is semistable along the semicircular wall $W_{E_{\alpha},E_{\beta}}$, and stable outside this wall.
\end{theorem}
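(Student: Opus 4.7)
The plan is to induct on $q$, the dyadic depth of $\beta$. Since $p$ is even, write $p = 2p'$ and set $\gamma = \varepsilon((p'+1)/2^{q-1})$, so that $\beta = \alpha.\gamma$ and $(E_\alpha, E_\beta, E_\gamma)$ is a triad. Crucially, both $\alpha$ and $\gamma$ have dyadic depth strictly smaller than $q$, enabling the induction. The base case $q = 0$ reduces to $E_\beta = \OO(p+1)$ and $E_\alpha = \OO(p)$, where stability of line bundles in $\cA_{s,t}$ off the line-bundle wall $W_{\OO(p),\OO(p+1)}$ can be checked directly via Bogomolov.

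To produce the destabilization along $W_{E_\alpha,E_\beta}$, I would use the canonical triad sequence
$$0 \to S(-3) \to E_\alpha \otimes \Hom(E_\alpha,E_\beta) \to E_\beta \to 0$$
in $\coh\P^2$. Since $\dim\Hom(E_\alpha,E_\beta) = 3 r_\gamma > 0$ and both $E_\alpha, E_\beta$ are Mumford-stable with $\mu(E_\alpha) < \mu(E_\beta)$, a generic element of $\Hom(E_\alpha,E_\beta)$ is injective as a map of sheaves. For $s$ in a small interval just below $\mu(E_\alpha)$, all three terms lie in $\cQ_s$ and the injection lifts to $\cA_s$. A direct Chern-character calculation shows $\mu_{s,t}(E_\alpha) = \mu_{s,t}(E_\beta)$ precisely along $W_{E_\alpha,E_\beta}$, with $\mu_{s,t}(E_\alpha) > \mu_{s,t}(E_\beta)$ just inside it; thus $E_\beta$ is strictly $\mu_{s,t}$-semistable on the wall (the HN filtration involves $E_\alpha$ and $S(-3)$) and destabilized just inside it. Nonemptiness of the wall is then confirmed by verifying the radius squared is positive, which follows from the explicit formulas for $r_\beta$, $\chi_\beta$, and $\Delta_\beta$.

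The main obstacle is the second half: $E_\beta$ must be \emph{stable} outside $W_{E_\alpha,E_\beta}$. Suppose for contradiction some $F \hookrightarrow E_\beta$ in $\cA_{s_0}$ destabilizes $E_\beta$ along a wall $W$ strictly outside $W_{E_\alpha,E_\beta}$; by Lemma \ref{nestingByCenters}, the center of $W$ is strictly farther from $s = \mu(E_\beta)$ than that of $W_{E_\alpha,E_\beta}$. I would then analyze $F$ by cases on its cohomology. If $F$ is a coherent subsheaf, Bogomolov bounds its invariants, and composing with the dual triad inclusion $E_\beta \hookrightarrow E_\gamma \otimes \Hom(E_\beta,E_\gamma)^*$ produces from $F$ a sub-object of $E_\gamma^{3r_\alpha}$; the inductive hypothesis for $E_\gamma$ (of smaller depth) then bounds the possible wall-center for such a sub-object, and I would check numerically that this is incompatible with $W$ lying outside $W_{E_\alpha,E_\beta}$. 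If instead $H^{-1}(F) \neq 0$, the short exact sequence $0 \to H^{-1}(F)[1] \to F \to H^0(F) \to 0$ in $\cA_{s_0}$ reduces to the coherent case applied either to $H^0(F)$ or, dually via Serre duality, to a destabilizer of $E_{-\beta-3}$, to which the inductive hypothesis for $E_\alpha$ applies after swapping the roles of $E_\alpha$ and $E_\gamma$. The delicate point, and the real technical heart of the proof, will be the careful bookkeeping in the second case, where derived-category sub-objects interact nontrivially with the triad filtration.
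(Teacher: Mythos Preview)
Your induction on $q$ is the right skeleton, but the proposal has a structural gap that prevents the induction from closing.

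First, a concrete error: the claim that for $s$ just below $\alpha$ ``all three terms lie in $\cQ_s$'' is false in general. The kernel $S(-3)$ of the triad map $E_\alpha\otimes\Hom(E_\alpha,E_\beta)\to E_\beta$ is itself an exceptional bundle $E_{\zeta}$, where $\zeta$ depends on $p\bmod 4$. When $p\equiv 0\pmod 4$ (or $q=1$) one has $\zeta=\varepsilon((p+4)/2^q)-3$, which is far to the left of $\alpha$; along the wall $W_{E_\alpha,E_\beta}$ (which sits in the strip $\zeta<s<\alpha$) the kernel lies in $\cF_s$, not $\cQ_s$. The correct exact sequence in $\cA_s$ is the rotated one,
\[
0\to E_\alpha\otimes\Hom(E_\alpha,E_\beta)\to E_\beta\to E_{\zeta}[1]\to 0,
\]
so the quotient object is a \emph{shifted} exceptional bundle. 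When $p\equiv 2\pmod 4$ the kernel $E_{\zeta}$ does lie in $\cQ_s$ along the wall and the sheaf sequence survives, but you must treat the two cases separately.

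This forces the real issue: your inductive hypothesis is too weak. To conclude that $E_\beta$ is semistable along $W_{E_\alpha,E_\beta}$ you need both $E_\alpha$ and the quotient $E_{\zeta}[1]$ (or the subobject $E_{\zeta}$, depending on the case) to be semistable there. The paper handles this by strengthening the induction to prove simultaneously that $E_\beta$ is semistable along $W_{E_\alpha,E_\beta}$ \emph{and} that $E_\beta[1]$ is semistable along $W_{E_\beta,E_\eta}$ (your $E_\gamma$), and by precisely identifying $\zeta$ so that the inductive hypothesis for the shifted bundle applies. One then shows via a wall-nesting argument (the wall where $E_\alpha$ or $E_{\zeta}[1]$ is destabilized is nested inside $W_{E_\alpha,E_\beta}$) that both extension terms are semistable on the wall, hence so is $E_\beta$. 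Your plan to apply induction only to $E_\alpha$ and $E_\gamma$ misses the object $E_{\zeta}$ entirely and never touches shifts.

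Finally, your proposed direct analysis of destabilizing subobjects $F\hookrightarrow E_\beta$ for the ``stable outside'' statement is unnecessary. Once semistability along the wall is known, stability outside follows from the general fact (for Mumford-stable $E$) that $(s_0,t_0)$-semistability implies $(s,t)$-stability whenever the wall through $(s_0,t_0)$ is nested inside the wall through $(s,t)$. All the work is on the wall itself; the case analysis you sketch is not needed.
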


We will see later that the wall $W_{E_\alpha,E_\beta}$ is nonempty and lies to the left of the vertical wall $s = \mu(E_\beta) = \beta$ so long as $q\geq 1$.  In case $\beta$ is an integer, we have $E_\beta = \OO_{\P^2}(\beta)$ and $E_\beta$ is $(s,t)$-stable for all $s<\beta$ by \cite[Proposition 6.2]{ABCH}.  We thus assume $q\geq 1$ for the rest of the section.

To prove the theorem it will be necessary to simultaneously address the stability of shifted exceptional bundles of the form $E_\beta[1]$.  The next lemma will allow us to treat these on an essentially equal footing with ordinary exceptional bundles.

\begin{lemma}
Let $E$ be a Mumford-stable sheaf, and suppose either $E\in \cQ_s$ or $E\in \cF_s$; write $E'=E$ in the first case and $E'=E[1]$ in the second case, so that $E'\in \cA_s$.  Then $E'$ is $(s,t)$-stable for sufficiently large $t$.  If $E'$ is $(s_0,t_0)$-semistable for some $(s_0,t_0)$, then $E'$ is $(s,t)$-stable whenever the semicircular wall passing through $(s_0,t_0)$ is nested inside the semicircular wall passing through $(s,t)$.
\end{lemma}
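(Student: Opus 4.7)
The plan is to handle the two assertions in turn, each by a standard wall-crossing argument. For the first assertion, the case $E\in\cQ_s$ is precisely the proposition recalled just before the lemma. For $E\in\cF_s$ with $E'=E[1]$, I would either run the parallel argument directly---noting that $\ch(E[1])=-\ch(E)$ flips the sign of the numerator and denominator of $\mu_{s,t}$ coherently, so the argument of \cite{Bridgeland, ABCH} carries over---or reduce to the first case by Serre duality $\Ext^i(F,E[1])\cong\Ext^{2-i}(E,F(-3))^{*}$, which swaps the roles of subobjects and quotients.

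For the second assertion, I would exploit the nested semicircular structure of the potential walls $W_{E',F}$ for $E'$. By Lemma \ref{nestingByCenters} together with the center/radius formulas above, these form a family of semicircles centered on the $s$-axis, nested on each side of the vertical wall $s=\mu(E')$, with the radius at any fixed center depending only on the character of $E'$ via the relation $(x_0-\mu(E'))^2-\rho^2 = 2\Delta(E')$. Eliminating $\rho$ between this identity and $(s-x_0)^2+t^2=\rho^2$ shows that through each $(s,t)$ with $s\neq\mu(E')$ there passes a unique semicircle of the nested family, and this is what the statement means by ``the semicircular wall passing through $(s,t)$.''

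I would then argue by contradiction. Let $C_0$ and $C$ be the nested semicircles through $(s_0,t_0)$ and $(s,t)$ respectively, with $C_0\subsetneq C$. If $E'$ were not $(s,t)$-stable, there would be a nonzero proper subobject $F\hookrightarrow E'$ in $\cA_s$ with $\mu_{s,t}(F)\geq\mu_{s,t}(E')$. The locus of equality is a wall $W_{E',F}$, hence a semicircle $C_F$ of the nested family. The first part of the lemma forces $\mu_{s',t'}(F)<\mu_{s',t'}(E')$ for $t'\gg 0$, so by continuity $F$ can only strictly destabilize $E'$ in the interior of $C_F$, and the inequality at $(s,t)$ places $(s,t)$ on or inside $C_F$. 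The nesting then gives $C_0\subsetneq C\subseteq C_F$, so $(s_0,t_0)$ lies strictly inside $C_F$, where $F$ strictly destabilizes $E'$---contradicting semistability at $(s_0,t_0)$.

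The main technical obstacle will be the bookkeeping needed to transport the destabilizing inclusion $F\hookrightarrow E'$ in $\cA_s$ between the reference points $(s_0,t_0)$ and $(s,t)$, since the heart $\cA_{s'}$ itself depends on $s'$ and $F$ need not a priori lie in every $\cA_{s'}$ along a connecting path. The standard remedy is to work within a single abelian heart covering a connected region containing the wall $C_F$ and both reference points; concretely, one checks that on such a region the relevant cohomology sheaves of $F$ and $E'$ remain in $\cF_{s'}$ and $\cQ_{s'}$ as required, so that the inclusion and the slope comparison make sense uniformly throughout. Once this is handled, the nesting argument above closes the proof.
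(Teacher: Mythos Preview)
Your proposal is correct and is precisely the kind of argument the paper has in mind: the paper does not give a proof at all, but simply states that the case $E\in\cQ_s$ is handled in \cite[Section 6]{ABCH} and that the case $E\in\cF_s$ follows by similar methods. Your sketch faithfully reconstructs that omitted argument, and you correctly isolate the one genuine technical point---transporting the destabilizing subobject between hearts $\cA_{s'}$ as $s'$ varies---which is exactly what is dealt with in \cite{ABCH} (and which the paper itself invokes elsewhere when it asserts that a destabilizing inclusion persists along the entire wall $W_{E,F}$).
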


In \cite[Section 6]{ABCH} the case where $E\in \cQ_s$ was handled; when $E\in \cF_s$ similar methods can be used, so we omit the proof.  Note in particular that the walls for $E[1]$ are the same as the walls for $E$; however, while the relevant family of semicircles for a sheaf $E\in \cQ_s$ is the family to the left of the vertical wall $s = \mu(E)$, the relevant family of semicircles for $E[1]$ when $E\in \cF_s$ is the family to the \emph{right} of the vertical wall $s = \mu(E)$, as this is the region where $E[1]\in \cA_s$.

It will be useful to introduce some additional exceptional slopes.  We put $$\zeta_0 = \varepsilon\left(\frac{p+4}{2^q}-3\right)\qquad \zeta_2 = \varepsilon\left(\frac{p-2}{2^q}\right) $$$$\alpha = \varepsilon\left(\frac{p\vphantom1}{2^q}\right) \qquad \beta = \varepsilon\left(\frac{p+1}{2^q}\right) \qquad \eta = \varepsilon\left(\frac{p+2}{2^q}\right)$$$$ \quad \omega_0=\varepsilon\left(\frac{p+4}{2^q}\right)\qquad \omega_2=\varepsilon\left(\frac{p-2}{2^q}+3\right),$$ where $p$ is even and $q\geq 1$.  Observe that $\beta = \alpha.\eta$, if $p\equiv 2\pmod{4}$ then $\alpha =\zeta_2.\eta$, and if $p\equiv 0 \pmod 4$ then $\eta=\alpha.\omega_0$.  The significance of the slopes $\zeta_i$ and $\omega_i$ is provided by the following result from \cite{DrezetBeilinson}.

\begin{theorem}\label{kernelSlope}
Let $i\in \{0,2\}$ be such that $i\equiv p \pmod 4$.  There are exact sequences of vector bundles$$0\to E_{\zeta_i}\to E_\alpha\te \Hom(E_\alpha,E_\beta)\to E_\beta\to 0$$ and $$0\to E_\beta\to E_{\eta}\te \Hom(E_{\beta},E_\eta)^*\to E_{\omega_i}\to 0.$$
\end{theorem}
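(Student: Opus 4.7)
The plan is to deduce both sequences from the general triad construction of Section \ref{triadSection}, and then to identify the auxiliary exceptional bundle arising from this triad as $E_{\omega_i}$ (and its twist as $E_{\zeta_i}$, the relation $\zeta_i=\omega_i-3$ being immediate from inspection of the definitions).

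First I would verify that $(E_\alpha,E_\beta,E_\eta)$ is a triad. Since $p$ is even and $q\geq 1$, one can rewrite $\alpha=\varepsilon((p/2)/2^{q-1})$ and $\eta=\varepsilon((p/2+1)/2^{q-1})$, and then the recursion defining $\varepsilon$ yields $\beta=\alpha.\eta$. Thus $(\mu(E_\alpha),\mu(E_\beta),\mu(E_\eta))$ has the form $(\alpha',\alpha'.\eta',\eta')$, so it is a triad. The Drezet--Beilinson machinery recalled in Section \ref{triadSection} then produces a single exceptional bundle $S$ fitting into both
$$0\to S(-3)\to E_\alpha\te\Hom(E_\alpha,E_\beta)\to E_\beta\to 0\qquad \text{and}\qquad 0\to E_\beta\to E_\eta\te\Hom(E_\beta,E_\eta)^*\to S\to 0,$$
with the prescribed dimensions $\dim\Hom(E_\alpha,E_\beta)=3r_\eta$ and $\dim\Hom(E_\beta,E_\eta)=3r_\alpha$. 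So it remains to identify $S\cong E_{\omega_i}$.

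Since an exceptional bundle is determined by its slope, this reduces to checking $\mu(S)=\omega_i$. Additivity of Chern character in the cokernel sequence, combined with the identity $r_\beta=r_\alpha r_\eta(3+\alpha-\eta)$ from Lemma \ref{numericalProps}, gives $\rk(S)=r_\alpha r_\eta(\eta-\alpha)$ and
$$\mu(S)=\frac{3r_\alpha r_\eta\,\eta-r_\beta\,\beta}{3r_\alpha r_\eta-r_\beta}=\frac{3\eta-(3+\alpha-\eta)\beta}{\eta-\alpha}.$$
The case split $p\pmod 4$ reflects the local dyadic tree structure near $\eta$: when $p\equiv 0\pmod 4$, the element $(p+2)/2^q$ is the midpoint of the segment $[p/2^q,(p+4)/2^q]$, so $\eta=\alpha.\omega_0$ and $\omega_0$ is the next dyadic neighbor to the right of $\eta$ at level $q$; when $p\equiv 2\pmod 4$, a symmetric picture holds in which $\omega_2-3$ plays the analogous role on the left, via the reflection symmetry $\alpha\mapsto-\alpha$ of $\F E$. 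In each case one expresses $\omega_i$ as an $\alpha'.\beta'$-combination and reduces the desired equality to a direct manipulation using the identities $\alpha.\beta-\alpha=(r_\alpha^2(3+\alpha-\beta))^{-1}$, $\beta-\alpha.\beta=(r_\beta^2(3+\alpha-\beta))^{-1}$, and $P(\alpha-\beta)=\Delta_\alpha+\Delta_\beta$ of Lemma \ref{numericalProps}.

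The main obstacle is this final numerical identification. Although each step uses only identities already recorded in Lemma \ref{numericalProps}, keeping track of the dyadic positions of $\alpha,\beta,\eta,\omega_i$ across the two parity cases is somewhat delicate. A clean route is induction on $q$: the base case $q=1$ has $\alpha,\eta$ consecutive integers and all the bundles in sight reduce to line bundles and twists of the tangent bundle, with the two sequences becoming variants of the Euler sequence; the inductive step exploits the fact that passing from level $q-1$ to level $q$ introduces $\beta=\alpha.\eta$ as a new node and is governed by the same recursion that defines $\varepsilon$.
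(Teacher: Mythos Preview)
The paper does not actually prove Theorem \ref{kernelSlope}; it is quoted from Drezet \cite{DrezetBeilinson} (see the sentence immediately preceding the statement). Your strategy---apply the triad machinery of Section \ref{triadSection} to $(E_\alpha,E_\beta,E_\eta)$ and then pin down the exceptional cokernel $S$ by computing its slope---is exactly the right one and is in the spirit of Drezet's argument.

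Your reduction is correct, including the formula $\mu(S)=\dfrac{3\eta-(3+\alpha-\eta)\beta}{\eta-\alpha}$. However, the induction on $q$ you propose at the end is unnecessary; a direct manipulation using only Lemma \ref{numericalProps} finishes the identification. First rewrite
\[
\mu(S)-\eta=\frac{(3+\alpha-\eta)(\eta-\beta)}{\eta-\alpha}=\frac{1}{r_\eta^2(\eta-\alpha)},
\]
using $\eta-\beta=(r_\eta^2(3+\alpha-\eta))^{-1}$. Now split on $p\bmod 4$. If $p\equiv 0\pmod 4$ then, as you note, $\eta=\alpha.\omega_0$; the identities $r_\eta=r_\alpha r_{\omega_0}(3+\alpha-\omega_0)$ and $\eta-\alpha=(r_\alpha^2(3+\alpha-\omega_0))^{-1}$ give $r_\eta^2(\eta-\alpha)=r_{\omega_0}^2(3+\alpha-\omega_0)$, whence $\mu(S)-\eta=\omega_0-\eta$. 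If $p\equiv 2\pmod 4$ then the relevant structural fact (cleaner than the reflection you invoke) is $\alpha=\zeta_2.\eta$, so $\eta-\alpha=(r_\eta^2(3+\zeta_2-\eta))^{-1}$ and hence $\mu(S)-\eta=3+\zeta_2-\eta$, i.e.\ $\mu(S)=\zeta_2+3=\omega_2$. In both cases $\mu(S)=\omega_i$, and since $\zeta_i=\omega_i-3$ the kernel is $E_{\zeta_i}$ as claimed.
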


The theorem provides an inductive description for building up $E_\beta$ in terms of simpler exceptional bundles.  We record how these exact sequences interact with the categories $\cA_s$.

\begin{proposition}\label{exceptionalBridgelandSeqLemma}
For each of the following four cases, there exists some $(s,t)$ such that the displayed sequence is an exact sequence of objects of $\cA_s$ with the same $\mu_{s,t}$-slope.

\begin{enumerate}
\item For $p\equiv 0 \pmod 4$ or $q=1$, $$0\to E_{\alpha}\te \Hom(E_\alpha,E_\beta)\to E_\beta \to E_{\zeta_0}[1]\to 0.$$

\item For $p\equiv 2 \pmod 4$ and $q\geq 2$, $$0\to E_{\zeta_2}\to E_\alpha\te \Hom(E_{\alpha},E_\beta)\to E_\beta\to 0.$$

\item For $p\equiv 0 \pmod 4$ and $q\geq 2$, $$0\to E_\beta[1] \to E_\eta[1] \te \Hom(E_\beta,E_\eta)^*\to E_{\omega_0}[1]\to 0.$$

\item For $p\equiv 2 \pmod 4$ or $q=1$, $$0\to E_{\omega_2}\to E_{\beta}[1]\to E_{\eta}[1]\te \Hom(E_\beta,E_\eta)^*\to 0.$$
\end{enumerate}
\end{proposition}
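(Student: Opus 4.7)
The plan is to treat all four cases by a single template. In each case, the displayed sequence is obtained by passing one of the two short exact sequences of sheaves produced by Theorem \ref{kernelSlope} to the derived category and possibly rotating the resulting distinguished triangle once. Once all three terms of the triangle lie in the heart $\cA_s$ for some $(s,t)$ with $t>0$, taking $\cA_s$-cohomology automatically returns a short exact sequence in $\cA_s$. Moreover, the three $\mu_{s,t}$-slopes necessarily coincide at any point of the common semicircular wall determined by any two of the Chern characters involved, so the problem reduces to showing that this wall meets the locus of $(s,t)$ for which all three terms are simultaneously in $\cA_s$.

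For each case I would first read off, from the residue of $p$ modulo $4$, which of $\zeta_0,\zeta_2,\omega_0,\omega_2$ appears in Theorem \ref{kernelSlope}, and write down the corresponding distinguished triangle. Cases (2) and (3) use these triangles directly as a sequence of sheaves (case (2)) or of shifts of sheaves (case (3)); cases (1) and (4) rotate once so that exactly one term acquires a shift. Applying the rules $E_\gamma\in\cQ_s\iff s<\gamma$ and $E_\gamma\in\cF_s\iff \gamma\leq s$ term by term then determines the interval $I$ of admissible $s$, which comes out to
\[\zeta_0\leq s<\alpha,\qquad s<\zeta_2,\qquad \omega_0\leq s,\qquad \eta\leq s<\omega_2\]
in cases (1)--(4) respectively; each of these is non-empty by the monotonicity of $\varepsilon$ on its dyadic argument (for instance $(p+4)/2^q-3<p/2^q$ for $q\geq 1$ in case (1)).

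The main obstacle is the final step: verifying that the wall in question is a real semicircle whose projection onto the $s$-axis meets $I$. In each case the wall coincides with the wall between a pair of exceptional bundles whose slopes $\gamma<\gamma'$ correspond to adjacent dyadics in the sense of Lemma \ref{numericalProps} (the pair $(\alpha,\beta)$ in cases (1)--(2) and the pair $(\beta,\eta)$ in cases (3)--(4)), so I can compute its centre and radius by substituting the Chern characters $(r_\gamma,r_\gamma\gamma,r_\gamma(\gamma^2/2-\Delta_\gamma))$ into the general formulas recalled before Lemma \ref{nestingByCenters}. Using the identity $P(\gamma-\gamma')=\Delta_\gamma+\Delta_{\gamma'}$ from Lemma \ref{numericalProps} to simplify, the required containment becomes an explicit inequality between the endpoints of the semicircle on the $s$-axis and the boundary slopes of $I$, which I expect to fall out of the same arithmetic. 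The degenerate case $q=1$, where $\zeta_0=\zeta_2$, $\omega_0=\omega_2$, and all slopes collapse to integers or half-integers, must be checked by hand but is immediate.
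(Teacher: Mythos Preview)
Your plan is essentially the paper's own argument: pass the sequences of Theorem \ref{kernelSlope} to distinguished triangles, rotate once in cases (1) and (4), read off the interval $I$ of admissible $s$ from membership in $\cQ_s$ or $\cF_s$, and then check that the common wall of the three objects meets $\{s\in I,\,t>0\}$.  Two simplifications in the paper are worth adopting.  First, you do not need the endpoints of the semicircle: since the topmost point of the wall lies directly above its centre, it is enough to verify that the centre itself lies in $I$, and you never need to compute or estimate the radius.  Second, the wall is the same for any two of the three terms, and the paper chooses the pair that makes the centre formula easiest.  In case (1) it uses $W_{E_{\zeta_0},E_\alpha}$ rather than $W_{E_\alpha,E_\beta}$: since $\alpha-\zeta_0\geq 1$ while $|\Delta_{\zeta_0}-\Delta_\alpha|<\tfrac12$, the correction term in Lemma \ref{centerLemma} has absolute value below $\tfrac12$ and the centre visibly sits strictly between $\zeta_0$ and $\alpha$.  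In case (2) it uses $W_{E_{\zeta_2},E_\alpha}$, observing that $(\zeta_2,\alpha)$ are again adjacent dyadics at level $q-1$, so Lemma \ref{centerIneqLemma} (applied at level $q-1$) gives $(\Delta_{\zeta_2}-\Delta_\alpha)/(\alpha-\zeta_2)\leq -\tfrac34$ and forces the centre to the left of $\zeta_2$.  Your choice of the pair $(\alpha,\beta)$ in cases (1)--(2) would also work, but bounding the centre from \emph{both} sides using that pair is noticeably more laborious; the ``expect it to fall out'' step is exactly where the argument needs the extra input of Lemma \ref{centerIneqLemma} or the switch to the other pair.
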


Before proving the proposition let us isolate some numerical facts that will be useful.

\begin{lemma}\label{centerLemma}
Let $\alpha,\beta$ be two exceptional slopes as in this section, except do not require that $p$ be even (so that, in particular, this result also applies to the pair of slopes $(\beta,\eta)$ from this section).  The center of the wall $W_{E_\alpha,E_\beta}$ is located at the point $(x,0)$ with $$x = \frac{\alpha+\beta}{2}+ \frac{\Delta_\beta-\Delta_\alpha}{\alpha-\beta},$$ and the radius $\rho$ satisfies $$\rho^2 = \left(\frac{\alpha-\beta}{2}\right)^2-P(\alpha-\beta)+\left(\frac{\Delta_\beta-\Delta_\alpha}{\alpha-\beta}\right)^2.$$ In fact, the formula for the center is valid for \emph{any} two exceptional slopes (the formula for the radius is not).
\end{lemma}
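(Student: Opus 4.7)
The plan is straightforward: this is essentially a direct computation using the Chern character formula $\ch(E_\alpha) = (r_\alpha, r_\alpha\alpha, r_\alpha(\tfrac{1}{2}\alpha^2-\Delta_\alpha))$ plugged into the general wall formulas already recalled in the excerpt. The main subtlety is isolating where the hypothesis on $(\alpha,\beta)$ (i.e. that they come from adjacent dyadics in the same generation) is actually used.

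First I would compute the center. From the general formula the center is $(rd'-r'd)/(rc'-r'c)$ with $(r,c,d)=\ch(E_\alpha)$ and $(r',c',d')=\ch(E_\beta)$. The common factor $r_\alpha r_\beta$ cancels in numerator and denominator, and what remains simplifies by elementary algebra to
\[
x \;=\; \frac{\tfrac{1}{2}(\beta^2-\alpha^2) + (\Delta_\alpha-\Delta_\beta)}{\beta-\alpha} \;=\; \frac{\alpha+\beta}{2} + \frac{\Delta_\beta-\Delta_\alpha}{\alpha-\beta}.
\]
Notice that no special relationship between $\alpha$ and $\beta$ was invoked, so the center formula is valid for any two exceptional slopes, as claimed.

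Next I would compute the radius using $\rho^2 = x^2 - 2(cd'-c'd)/(rc'-r'c)$. An analogous cancellation gives
\[
\frac{cd'-c'd}{rc'-r'c} \;=\; \frac{\alpha\beta}{2} + \frac{\beta\Delta_\alpha-\alpha\Delta_\beta}{\beta-\alpha}.
\]
Expanding $x^2$ and subtracting twice the above, the terms $\tfrac{(\alpha+\beta)^2}{4}-\alpha\beta$ collapse to $\bigl(\tfrac{\alpha-\beta}{2}\bigr)^2$, the squared correction $\bigl(\tfrac{\Delta_\beta-\Delta_\alpha}{\alpha-\beta}\bigr)^2$ survives, and the cross-term collects into
\[
\frac{(\alpha+\beta)(\Delta_\beta-\Delta_\alpha)+2(\beta\Delta_\alpha-\alpha\Delta_\beta)}{\alpha-\beta} \;=\; -(\Delta_\alpha+\Delta_\beta),
\]
as a direct expansion shows. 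Thus
\[
\rho^2 \;=\; \left(\frac{\alpha-\beta}{2}\right)^2 - (\Delta_\alpha+\Delta_\beta) + \left(\frac{\Delta_\beta-\Delta_\alpha}{\alpha-\beta}\right)^2.
\]

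The last step, and the only place the hypothesis on $\alpha,\beta$ enters, is to replace $\Delta_\alpha+\Delta_\beta$ with $P(\alpha-\beta)$. This is precisely Lemma \ref{numericalProps}(3), which requires that $\alpha=\varepsilon(p/2^q)$ and $\beta=\varepsilon((p+1)/2^q)$ for some $p,q$; substituting gives the stated formula for $\rho^2$ and explains why, in contrast to the center formula, the radius formula does not hold for arbitrary pairs of exceptional slopes. The only real obstacle is bookkeeping in the algebraic manipulation; there is no conceptual difficulty.
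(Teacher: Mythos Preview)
Your argument is correct and matches the paper's approach: the paper simply states that ``the proof is a straightforward computation with the formulas of the previous section,'' and your write-up is precisely that computation carried out in detail. In particular, you correctly identify that Lemma \ref{numericalProps}(3) is the only place the adjacency hypothesis on $\alpha,\beta$ is used, which accounts for the distinction the lemma draws between the center and radius formulas.
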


The proof is a straightforward computation with the formulas of the previous section.  Quantities of the form $(\Delta_\beta-\Delta_\alpha)/(\alpha-\beta)$ occur frequently, and we must estimate them.

\begin{lemma}\label{centerIneqLemma}
With the notation of this section, suppose $q\geq 2$.   Then $$\frac{\Delta_\beta-\Delta_\alpha}{\alpha-\beta}<-1 \qquad \textrm{and}\qquad \frac{\Delta_\eta-\Delta_\beta}{\beta-\eta}>1.$$ When $q=1$, the first quantity equals $-3/4$ and the second is $3/4$.
\end{lemma}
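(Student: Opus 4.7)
The plan is to reduce both ratios to explicit functions of $u := \eta - \alpha$, $r_\alpha$, and $r_\eta$, and then use a number-theoretic identity to bound $u$ in terms of $q$. First I would observe that $\beta = \alpha.\eta$: since $p$ is even and $p+1$ is odd, writing $p = 2p'$, the inductive definition of $\varepsilon$ gives $\beta = \varepsilon(p'/2^{q-1}).\varepsilon((p'+1)/2^{q-1})$, and these two factors are exactly $\alpha$ and $\eta$. Consequently Lemma \ref{numericalProps} applies and yields
\begin{equation*}
\beta - \alpha = \frac{1}{r_\alpha^2 v}, \qquad \eta - \beta = \frac{1}{r_\eta^2 v}, \qquad r_\beta = r_\alpha r_\eta v,
\end{equation*}
where $v := 3+\alpha-\eta = 3-u$.

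Substituting these together with the formula $\Delta_\gamma = \frac{1}{2}(1-r_\gamma^{-2})$ into the two ratios of interest, a brief calculation collapses them to
\begin{equation*}
\frac{\Delta_\beta-\Delta_\alpha}{\alpha-\beta}=-\frac{v}{2}+\frac{1}{2 r_\eta^2 v},\qquad \frac{\Delta_\eta-\Delta_\beta}{\beta-\eta}=\frac{v}{2}-\frac{1}{2 r_\alpha^2 v}.
\end{equation*}
The desired inequalities $<-1$ and $>1$ are then each equivalent, after clearing denominators, to the single statement $r^2(3-u)(1-u)>1$, where $r$ is $r_\eta$ in the first case and $r_\alpha$ in the second.

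The key input is then an upper bound on $u$. Summing the two displayed identities for $\beta-\alpha$ and $\eta-\beta$ gives
\begin{equation*}
u(3-u) \;=\; \frac{1}{r_\alpha^2}+\frac{1}{r_\eta^2}.
\end{equation*}
When $q\geq 2$, we cannot have $2^q\mid p$ and $2^q\mid (p+2)$ simultaneously (as this would force $2^q\mid 2$), so at most one of $\alpha,\eta$ is an integer; hence at most one of $r_\alpha,r_\eta$ equals $1$, forcing $r_\alpha^{-2}+r_\eta^{-2}\leq 5/4$. Since $u\mapsto u(3-u)$ is strictly increasing on $[0,1]$, we obtain $u\leq 1/2$, and then $(3-u)(1-u)\geq 5/4>1$, which gives both inequalities (with room to spare).

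For $q=1$, translation-invariance of everything in sight lets us reduce to $p=0$, so $(\alpha,\beta,\eta)=(0,1/2,1)$, $r_\alpha=r_\eta=1$, $u=1$, $v=2$, and the displayed formulas evaluate to $-3/4$ and $3/4$ respectively. The only technical obstacle is the bookkeeping needed to collapse the two ratios into the clean form above; this is entirely mechanical once Lemma \ref{numericalProps} is applied, and the genuinely substantive content of the proof is the observation that $q\geq 2$ precludes $(r_\alpha,r_\eta)=(1,1)$.
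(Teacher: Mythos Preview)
Your argument is correct and takes a somewhat different route from the paper. The paper rewrites the first inequality via the identity $P(\alpha-\beta)=\Delta_\alpha+\Delta_\beta$ from Lemma~\ref{numericalProps}(3), reduces it to $\frac{5}{2}(\beta-\alpha)\leq r_\alpha^{-2}$, plugs in $\beta-\alpha=1/(r_\alpha^2(3+\alpha-\eta))$, and then simply asserts $\eta-\alpha\leq \tfrac{1}{2}$ for $q\geq 2$. You instead compute the two ratios explicitly as $-v/2+1/(2r_\eta^2v)$ and $v/2-1/(2r_\alpha^2v)$, which treats both inequalities symmetrically and collapses them to the single condition $r^2(3-u)(1-u)>1$. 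Your derivation of $u\leq\tfrac12$ from $u(3-u)=r_\alpha^{-2}+r_\eta^{-2}$ together with the parity observation that $(r_\alpha,r_\eta)\neq(1,1)$ for $q\geq 2$ is a nice self-contained replacement for the paper's bald assertion of the same bound.

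One small point to tighten: when you write ``since $u\mapsto u(3-u)$ is strictly increasing on $[0,1]$, we obtain $u\leq 1/2$,'' you are implicitly using $u\leq 1$ (or at least $u<5/2$), because $u(3-u)\leq 5/4$ alone admits the spurious branch $u\geq 5/2$. This is easy to supply---since $\varepsilon$ is order-preserving and fixes integers, and $p'/2^{q-1}$, $(p'+1)/2^{q-1}$ lie in a common unit interval for $q-1\geq 1$, so do $\alpha$ and $\eta$---but it should be said. Otherwise the proof is complete.
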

\begin{proof}
We verify the first inequality.  It is equivalent to $$\alpha  -2\Delta_\alpha>\beta - \Delta_\beta -\Delta_\alpha = \beta - P(\alpha-\beta).$$ This can be rearranged to give $$\frac{1}{r_{\alpha}^2}> \frac{1}{2}(\beta-\alpha)(5+\alpha-\beta).$$ Since $\alpha<\beta$, it is enough to show $$\frac{5}{2}(\beta-\alpha)\leq \frac{1}{r_{\alpha}^2}.$$ Since $\beta = \alpha.\eta$, we find $$\beta-\alpha = \frac{1}{r_{\alpha}^2(3+\alpha-\eta)}.$$ As $q\geq 2$ we have $\eta-\alpha \leq 1/2$, and the required inequality follows.
\end{proof}

\begin{corollary}\label{nonemptyWall}
The walls $W_{E_\alpha,E_\beta}$ and $W_{E_{\beta},E_{\eta}}$ are nonempty.
\end{corollary}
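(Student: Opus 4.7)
The plan is to deduce nonemptiness directly from the explicit radius formula in Lemma \ref{centerLemma} together with the bounds in Lemma \ref{centerIneqLemma}. Since a wall $W_{E_\alpha,E_\beta}$ is a semicircle with center $(x,0)$ and radius $\rho\geq 0$, ``nonempty'' just means $\rho^2>0$. I would begin by expanding $P(\alpha-\beta)=\frac{1}{2}(\alpha-\beta)^2+\frac{3}{2}(\alpha-\beta)+1$ inside the formula of Lemma \ref{centerLemma}, setting $d=\beta-\alpha>0$. This rewrites the radius as
$$\rho^2 = \left(\frac{\Delta_\beta-\Delta_\alpha}{\alpha-\beta}\right)^2 - \left(1-\frac{3d}{2}+\frac{d^2}{4}\right).$$

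Now I would split into the two cases. For $q\geq 2$, Lemma \ref{centerIneqLemma} gives
$\bigl|(\Delta_\beta-\Delta_\alpha)/(\alpha-\beta)\bigr|>1$, so the first term on the right exceeds $1$. It therefore suffices to see that the parenthesized term is at most $1$, i.e.\ that $3d/2 \geq d^2/4$, which holds for all $0<d<6$; since in our setting $d<1/2$, this is trivially true, and $\rho^2>0$ follows.

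For the base case $q=1$, the parity condition forces $p=0$, whence $\alpha=0$, $\beta=1/2$, $\eta=1$, and Lemma \ref{centerIneqLemma} pins down $(\Delta_\beta-\Delta_\alpha)/(\alpha-\beta)=-3/4$. A direct substitution gives $\rho^2 = 9/16 - 5/16 = 1/4 > 0$, so the wall is again nonempty (in fact quite large).

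The argument for $W_{E_\beta,E_\eta}$ is entirely parallel: the pair $(\beta,\eta)$ satisfies the same hypotheses of Lemma \ref{centerIneqLemma} (second inequality), giving $((\Delta_\eta-\Delta_\beta)/(\beta-\eta))^2 > 1$ for $q\geq 2$ and the value $9/16$ for $q=1$, so the identical estimate yields $\rho^2 > 0$. There is no real obstacle here; the only mild subtlety is ensuring that one uses the squared form of the bound in Lemma \ref{centerIneqLemma} (so that the sign issue coming from $\alpha-\beta<0$ is irrelevant) and that the elementary inequality $3d/2>d^2/4$ is applied on the correct range of $d$, which is automatic once $q\geq 1$.
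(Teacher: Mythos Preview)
Your proof is correct and follows exactly the paper's approach: the paper's proof is the one-line observation that Lemmas~\ref{centerLemma} and~\ref{centerIneqLemma} force $\rho>0$, and you have simply carried out that computation explicitly. One minor imprecision: for $q=1$ the parity condition allows any even $p$, not just $p=0$, but since your calculation only uses $d=\beta-\alpha$ and the discriminant ratio (both translation-invariant), this does not affect the argument.
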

\begin{proof}
Lemmas \ref{centerLemma} and \ref{centerIneqLemma} show the radius $\rho$ of each wall satisfies $\rho>0$.
\end{proof}

\begin{proof}[Proof of Proposition \ref{exceptionalBridgelandSeqLemma}]
In each case, the result amounts to showing that an appropriate (nonempty) potential wall lies in the region where all the objects in the exact sequence lie in the category $\cA_s$.

(1) Suppose $p\equiv 0 \pmod 4$.  We must show the wall $W_{E_\alpha,E_\beta}=W_{E_{\zeta_0},E_\beta}=W_{E_{\zeta_0},E_\alpha}$ lies in the strip $\{(s,t):\zeta_0<s < \alpha\},$ where the exact sequence is valid in $\cA_s$.  For this we may show the center of the wall lies in the strip.  Using the description of the wall as $W_{E_{\zeta_0},E_\alpha}$, we must verify the inequalities $$\zeta_0 <  \frac{\alpha+\zeta_0}{2}+\frac{\Delta_{\zeta_0}-\Delta_\alpha}{\alpha-\zeta_0}<\alpha.$$ But $\alpha-\zeta_0\geq 1$ (with equality for $q=1$), so $$\left| \frac{\Delta_{\zeta_0}-\Delta_\alpha}{\alpha-\zeta_0}\right|<\frac{1}{2},$$ and both inequalities hold.

(2) If $p\equiv 2 \pmod 4$ and $q\geq 2$, then noting $\zeta_2< \alpha< \beta$ we claim the wall $W_{E_\alpha,E_\beta} = W_{E_\alpha,E_{\zeta_2}}$ lies to the left of the vertical wall $s = \zeta_2$, i.e. that $$\frac{\alpha+\zeta_2}{2}+\frac{\Delta_{\zeta_2}-\Delta_\alpha}{\alpha-\zeta_2}<\zeta_2.$$ Recalling $\zeta_2 = \varepsilon((p-2)/2^q)$, we see that Lemma \ref{centerIneqLemma} gives $$\frac{\Delta_{\zeta_2} - \Delta_\alpha}{\alpha-\zeta_2} \leq -\frac{3}{4}$$ since $q\geq 2$. But $(\alpha+\zeta_2)/2-\zeta_2 = (\alpha-\zeta_2)/2\leq 1/4$, so the required inequality holds.

Cases (3) and (4) are mirror images of the previous two cases.\end{proof}

\begin{lemma}\label{nestingLemma}
The wall $W_{E_\alpha,E_\beta}$ is nested inside the wall $W_{E_\alpha,E_{\alpha.\beta}}$, and the wall $W_{E_\eta,E_\beta}$ is nested inside the wall $W_{E_\eta,E_{\beta.\eta}}$.
\end{lemma}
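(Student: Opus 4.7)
The plan is to apply Lemma \ref{nestingByCenters} (via the translation remark following it) to each nesting statement, which reduces the task to two subchecks: that both walls in question lie on the same side of the relevant vertical wall, and that the $s$-coordinates of the two centers compare in the correct way.

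The central input I would develop first is a clean formula for the signed distance from a center to either endpoint of the vertical-wall pair. Given any two consecutive exceptional slopes $\gamma_1 < \gamma_2$ in the dyadic decomposition (so that $P(\gamma_1 - \gamma_2) = \Delta_{\gamma_1} + \Delta_{\gamma_2}$ by Lemma \ref{numericalProps}(3)), I claim that the center $\langle \gamma_1, \gamma_2\rangle$ of $W_{E_{\gamma_1}, E_{\gamma_2}}$ supplied by Lemma \ref{centerLemma} satisfies
\[
\gamma_1 - \langle \gamma_1, \gamma_2 \rangle \;=\; \frac{1}{r_{\gamma_1}^2(\gamma_2 - \gamma_1)} - \frac{3}{2} \qquad \text{and} \qquad \langle \gamma_1, \gamma_2 \rangle - \gamma_2 \;=\; \frac{1}{r_{\gamma_2}^2(\gamma_2 - \gamma_1)} - \frac{3}{2}.
\]
Both identities drop out of substituting $\Delta_{\gamma_i} = \tfrac{1}{2}(1 - r_{\gamma_i}^{-2})$ and the expression for $P$ into the formula of Lemma \ref{centerLemma} and simplifying.

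For the first statement I would apply the first of these formulas to $(\alpha,\beta)$ (consecutive at level $q$) and to $(\alpha, \alpha.\beta)$ (consecutive at level $q+1$); the identity $\alpha.\beta - \alpha = \tfrac{1}{r_\alpha^2(3+\alpha-\beta)}$ from Lemma \ref{numericalProps} then collapses the second expression to the especially clean $\alpha - \langle \alpha, \alpha.\beta\rangle = \tfrac{3}{2} + \alpha - \beta$. The nesting inequality $\alpha - \langle \alpha, \beta\rangle < \alpha - \langle \alpha, \alpha.\beta\rangle$ reduces to $r_\alpha^2(\beta-\alpha)(3+\alpha-\beta) > 1$, which follows from the identity $r_\alpha^2(\beta-\alpha)(3+\alpha-\beta) = 1 + r_\alpha^2/r_\beta^2$ obtained by the same sort of expansion of $P(\alpha-\beta) = \Delta_\alpha + \Delta_\beta$. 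For same-sidedness, the parity hypothesis $p$ even with $q\geq 1$ is what does the work: writing $\beta = \alpha.\eta$ where $\eta = \varepsilon((p+2)/2^q)$ has level at most $q-1$, Lemma \ref{numericalProps}(2) yields $r_\beta \geq 2 r_\alpha$, and substituting this into the rewriting $\tfrac{1}{r_\alpha^2(\beta-\alpha)} = \tfrac{(3-(\beta-\alpha))\,r_\beta^2}{r_\alpha^2 + r_\beta^2}$ forces this quantity to exceed $\tfrac{3}{2}$; positivity of $\tfrac{3}{2}+\alpha-\beta$ is immediate from $\beta-\alpha < 1$.

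The second statement is proved by the mirror of this argument, applying the second of the two formulas above to $(\beta, \eta)$ and $(\beta.\eta, \eta)$: since $p$ even implies $p+2$ even, $\eta$ also has level at most $q-1$ and $r_\beta \geq 2 r_\eta$, and the identical manipulations place both centers strictly to the right of $s = \eta$ and establish the required nesting. The main obstacle is really the bookkeeping around which side of the vertical wall each semicircle sits on, and it is precisely this that necessitates the parity hypothesis: at $\alpha = 2/5, \beta = 1/2$ (where $p$ is odd and $r_\alpha = 5 > r_\beta = 2$) the wall $W_{E_\alpha, E_\beta}$ swings to the right of $s = \alpha$ while $W_{E_\alpha, E_{\alpha.\beta}}$ remains to the left, and the nesting statement as written would fail.
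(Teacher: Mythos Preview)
Your argument is correct and follows the same overall strategy as the paper—reduce to Lemma~\ref{nestingByCenters} via the center formula of Lemma~\ref{centerLemma}—but the algebraic details differ in a way worth noting. The paper handles same-sidedness by invoking Lemma~\ref{centerIneqLemma} (which bounds $(\Delta_\beta-\Delta_\alpha)/(\alpha-\beta)$), and then reduces the nesting inequality to $2\Delta_\beta<1$ via Lemma~\ref{numericalProps}. You instead derive the clean closed forms $\gamma_1-\langle\gamma_1,\gamma_2\rangle=\tfrac{1}{r_{\gamma_1}^2(\gamma_2-\gamma_1)}-\tfrac{3}{2}$ (and its mirror), which let you bypass Lemma~\ref{centerIneqLemma} entirely: same-sidedness comes directly from the rank comparison $r_\beta\geq 2r_\alpha$ forced by the parity of $p$, and the nesting inequality collapses to $r_\alpha^2(\beta-\alpha)(3+\alpha-\beta)=1+r_\alpha^2/r_\beta^2>1$. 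Your route is a bit more self-contained and makes the role of the parity hypothesis more visible; the paper's route reuses Lemma~\ref{centerIneqLemma}, which is independently needed elsewhere in the section. The explicit counterexample at $(\alpha,\beta)=(2/5,1/2)$ is a nice addition not in the paper.
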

\begin{proof}
We check the first statement.  The center of the wall $W_{E_\alpha,E_\beta}$ is positioned at the point $(x,0)$ with $$x=\frac{\alpha+\beta}{2}+\frac{\Delta_\beta-\Delta_\alpha}{\alpha-\beta}.$$  We have $(\alpha+\beta)/2 - \alpha = (\beta-\alpha)/2 \leq 1/4$ since $q\geq 1$, so Lemma \ref{centerIneqLemma} shows $x<\alpha$ and thus the wall $W_{E_{\alpha},E_\beta}$ is centered to the left of the vertical wall $s = \alpha$.  To show it is nested inside $W_{E_\alpha,E_{\alpha.\beta}}$, we use Lemma \ref{nestingByCenters} and show the center of the latter wall lies to the left of the center of the former.  In symbols, we must establish the inequality $$\frac{\alpha+\alpha.\beta}{2}+\frac{\Delta_{\alpha.\beta}-\Delta_\alpha}{\alpha-\alpha.\beta}< \frac{\alpha+\beta}{2}+\frac{\Delta_\beta-\Delta_\alpha}{\alpha-\beta}.$$ Using the by now standard identities of Lemma \ref{numericalProps}, one easily shows this inequality is equivalent to the inequality $2\Delta_\beta <1.$

Note that the wall $W_{E_\eta,E_\beta}$ is located to the right of the wall $s = \eta$; otherwise the argument is identical.
\end{proof}

\begin{corollary}\label{root5Cor}
The walls $W_{E_\alpha,E_\beta}$ and $W_{E_\eta,E_\beta}$ have radius smaller than $\sqrt{5}/2$.
\end{corollary}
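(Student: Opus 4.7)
The plan is to bound $\rho < \sqrt{5}/2$ by direct computation with the formula of Lemma~\ref{centerLemma}, invoking the identity $P(\alpha-\beta) = \Delta_\alpha + \Delta_\beta$ of Lemma~\ref{numericalProps}(3), which is available because $\alpha = \varepsilon(p/2^q)$ and $\beta = \varepsilon((p+1)/2^q)$ are consecutive dyadic exceptional slopes. The pair $(\beta,\eta)$ is consecutive in the same sense, so a single argument will handle both walls, and I focus on $W_{E_\alpha,E_\beta}$.

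First I would set $d = \beta - \alpha \in (0,1)$ and expand $P(-d) = \tfrac{1}{2}(d^2 - 3d + 2)$ in the radius formula of Lemma~\ref{centerLemma}; this rewrites it as
\[
\rho^2 = -\frac{d^2}{4} + \frac{3d}{2} - 1 + \frac{(\Delta_\beta - \Delta_\alpha)^2}{d^2}.
\]
Completing the square, the desired inequality $\rho^2 < 5/4$ becomes $(\Delta_\beta - \Delta_\alpha)^2/d^2 < \bigl((3-d)/2\bigr)^2$, equivalently
\[
2|\Delta_\beta - \Delta_\alpha| < d(3-d).
\]

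To verify this, I would substitute $\Delta_\alpha = \tfrac{1}{2}(1 - 1/r_\alpha^2)$ and $\Delta_\beta = \tfrac{1}{2}(1 - 1/r_\beta^2)$ into the identity $P(\alpha-\beta) = \Delta_\alpha + \Delta_\beta$; this collapses the right-hand side to $d(3-d) = 1/r_\alpha^2 + 1/r_\beta^2$, while the left-hand side becomes $2|\Delta_\beta - \Delta_\alpha| = |1/r_\alpha^2 - 1/r_\beta^2|$. The inequality therefore reduces to
\[
\left|\frac{1}{r_\alpha^2} - \frac{1}{r_\beta^2}\right| < \frac{1}{r_\alpha^2} + \frac{1}{r_\beta^2},
\]
which is the strict triangle inequality for the two positive quantities $1/r_\alpha^2$ and $1/r_\beta^2$. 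Running the identical argument with $(\beta,\eta)$ in place of $(\alpha,\beta)$ then bounds the radius of $W_{E_\eta,E_\beta}$ the same way.

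There is no real obstacle here: the entire proof is driven by Lemma~\ref{numericalProps}(3), which causes the apparent complexity of the radius formula to collapse completely. In particular the nesting statement of Lemma~\ref{nestingLemma} plays no role in this argument.
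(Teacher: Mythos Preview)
Your argument is correct, and it is genuinely different from the paper's proof. The paper does not compute directly; instead it invokes Lemma~\ref{nestingLemma} to build an infinite chain of nested walls
\[
W_{E_\alpha,E_\beta} \subset W_{E_\alpha,E_{\alpha.\beta}} \subset W_{E_\alpha,E_{\alpha.(\alpha.\beta)}} \subset \cdots
\]
whose second slope tends to $\alpha+x_\alpha$, and then computes that the squared radii increase to the limiting value $5/4$. So the paper's bound is obtained asymptotically, relying essentially on the nesting statement, whereas you obtain it in one stroke by collapsing the radius formula via $P(\alpha-\beta)=\Delta_\alpha+\Delta_\beta$ to the triangle-type inequality $|r_\alpha^{-2}-r_\beta^{-2}|<r_\alpha^{-2}+r_\beta^{-2}$. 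Your route is more elementary (it makes Lemma~\ref{nestingLemma} unnecessary for this corollary) and exposes exactly which identity is doing the work; the paper's route, on the other hand, gives the extra qualitative information that the bound $\sqrt{5}/2$ is sharp in the limit.
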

\begin{proof}
Consider the sequence of walls $$W_{E_\alpha,E_\beta}, W_{E_\alpha,E_{\alpha.\beta}}, W_{E_\alpha,E_{\alpha.(\alpha.\beta)}}, W_{E_\alpha,E_{\alpha.(\alpha.(\alpha.\beta))}},\ldots$$ with each wall nested inside the next.  The sequence $$\beta,\alpha.\beta,\alpha.(\alpha.\beta),\alpha.(\alpha.(\alpha.\beta)),\ldots$$ is decreasing and converges to $\alpha+x_\alpha$, and the discriminants of the corresponding exceptional bundles converge to $1/2$.  Thus the squares of the radii increase and converge to $$\left(\frac{x_\alpha}{2}\right)^2-P(-x_\alpha)+\left(\frac{\frac{1}{2}-\Delta_\alpha}{x_\alpha}\right)^2 = \frac{5}{4},$$ so the radius of $W_{E_\alpha,E_\beta}$ is smaller than $\sqrt{5}/2$.
\end{proof}

When combined with Proposition \ref{exceptionalBridgelandSeqLemma},  Lemma \ref{nestingLemma} provides the main technical tool we need to complete the proof of Theorem \ref{excBridgeThm}.

\begin{proof}[Proof of Theorem \ref{excBridgeThm}]
We will prove by induction on $q$ that $E_\beta$ is $(s,t)$-semistable along the wall $W_{E_\alpha,E_\beta}$ and that $E_\beta[1]$ is $(s,t)$-semistable along the wall $W_{E_\beta,E_\eta}$.  The conclusion is true for line bundles, so by induction we may assume the result is true for the exceptional slopes $\zeta_i,\alpha,\eta,\omega_i$.  We must show the corresponding walls where these exceptional bundles and their shifts are destabilized are nested inside $W_{E_\alpha,E_\beta}$ or $W_{E_\beta,E_\eta}$ as necessary in each case.

\emph{Case 1: $p \equiv 0 \pmod 4$ or $q= 1$}.  Here we have an exact sequence $$0\to E_{\alpha}\te \Hom(E_\alpha,E_\beta)\to E_\beta \to E_{\zeta_0}[1]\to 0$$ of objects of $\cA_s$ with the same $\mu_{s,t}$-slope for each $(s,t)\in W_{E_\alpha,E_\beta}$.  Decompose $\alpha = \sigma.\tau$ for some exceptional slopes $\sigma$, $\tau$ with $\sigma<\tau$.  By induction, $E_\alpha$ is $(s,t)$-semistable outside the wall $W_{E_\alpha,E_\sigma}$.  We may write $$\sigma = \varepsilon\left(\frac{p'-2\vphantom1}{2^{q'}}\right) \qquad \alpha  = \varepsilon\left(\frac{p'}{2^{q'}}\right) \qquad \alpha.\tau =\varepsilon\left(\frac{p'+1}{2^{q'}}\right) \qquad \tau = \varepsilon\left(\frac{p'+2}{2^{q'}}\right)$$ with $p'\equiv 2 \pmod 4$ and $q'< q$.  By Theorem \ref{kernelSlope} there is an exact sequence $$0\to E_\sigma \to E_\alpha \te \Hom(E_\alpha,E_{\alpha.\tau})\to E_{\alpha.\tau}\to 0, $$ so there is an equality of walls $W_{E_\alpha,E_\sigma} = W_{E_\alpha,E_{\alpha.\tau}}$.  Now the sequence of walls $$W_{E_\alpha,E_{\alpha.\tau}}, W_{E_\alpha,E_{\alpha.(\alpha.\tau)}},W_{E_{\alpha},E_{\alpha.(\alpha.(\alpha.\tau))}},\ldots$$ has each wall nested in the next. But $\beta$ is one of the products $\alpha.\tau,\alpha.(\alpha.\tau),\alpha.(\alpha.(\alpha.\tau)),\ldots$ so we conclude $W_{E_{\alpha},E_\sigma}$ is nested in $W_{E_\alpha,E_\beta}$, and $E_\alpha$ is $(s,t)$-semistable along the wall $W_{E_\alpha,E_\beta}$.

We must also show $E_{\zeta_0}[1]$ is $(s,t)$-semistable along $W_{E_\alpha,E_\beta}$.  From Theorem \ref{kernelSlope} we see $W_{E_{\alpha},E_{\beta}}=W_{E_\alpha,E_{\zeta_0}}$.  Noting $\zeta_0 = \omega_0-3$, the center of $W_{E_{\alpha},E_{\zeta_0}}$ is located at the point $(x,0)$ with $$x = \frac{\alpha+\omega_0-3}{2}+\frac{\Delta_{\omega_0}-\Delta_\alpha}{3+\alpha-\omega_0}=\alpha.\omega_0-\frac{3}{2}=\eta-\frac{3}{2}.$$  We may write $$\sigma' =  \varepsilon\left(\frac{p'-2\vphantom1}{2^{q'}}\right) \qquad \sigma'.\zeta_0 =\varepsilon\left(\frac{p'-1}{2^{q'}}\right) \qquad \zeta_0  = \varepsilon\left(\frac{p'}{2^{q'}}\right) \qquad \tau' = \varepsilon\left(\frac{p'+2}{2^{q'}}\right)$$ with $p'\equiv 2 \pmod 4,$ and observe $\zeta_0=\sigma'.\tau'$.  By induction $E_{\zeta_0}[1]$ is $(s,t)$-semistable outside the wall $W_{E_{\zeta_0},E_{\tau'}}$.  From the exact sequence $$0\to E_{\sigma'.\zeta_0} \to E_{\zeta_0}\te \Hom(E_{\sigma'.\zeta_0},E_{\zeta_0})^*\to E_{\tau'}\to 0$$ we see that $W_{E_{\zeta_0},E_{\tau'}}=W_{E_{\zeta_0},E_{\sigma'.\zeta_0}}$.  By the same argument as in the previous paragraph (making use of the other half of Lemma \ref{nestingLemma}), this wall is nested inside $W_{E_{\zeta_0},E_{\eta-3}}$.  This semicircle lies in the same family of semicircles as $W_{E_{\zeta_0},E_\alpha}$, (and both are to the right of the vertical wall $s = \zeta_0$) so we show that the center of $W_{E_{\zeta_0},E_{\eta-3}}$ lies to the left of the center of $W_{E_{\zeta_0},E_\alpha}$.  This amounts to the inequality $$\frac{\omega_0+\eta}{2}-3+\frac{\Delta_{\omega_0}-\Delta_{\eta}}{\eta-\omega_0}<\eta-\frac{3}{2},$$ which can be rearranged to $$\frac{\omega_0+\eta}{2}+\frac{\Delta_{\omega_0}-\Delta_\eta}{3+\eta-\omega_0}> \eta,$$ i.e. $\eta.\omega_0>\eta$, which is true.

\emph{Case 2: $p\equiv 2\pmod 4$ and $q\geq 2$}.  This case is considerably easier than the previous one.  We have an exact sequence $$0\to E_{\zeta_2}\to E_\alpha\te \Hom(E_{\alpha},E_\beta)\to E_\beta\to 0$$ along the wall $W_{E_{\alpha},E_\beta}$.  By induction, $E_{\alpha}$ is semistable along the wall $W_{E_\alpha,E_{\zeta_2}} = W_{E_\alpha,E_\beta}$.  If we decompose $\zeta_2=\sigma.\tau$ then $E_{\zeta_2}$ is semistable along $W_{E_\sigma,E_{\zeta_2}}$.  This wall equals $W_{E_{\zeta_2},E_{\zeta_2.\tau}}$, so $W_{E_\sigma,E_{\zeta_2}}$ is nested inside $W_{E_{\zeta_2},E_\alpha} = W_{E_{\alpha}.E_\beta}$.  Thus both $E_\alpha$ and $E_{\zeta_2}$ are semistable along $W_{E_\alpha,E_\beta}$.

Shifted objects can be handled in the same manner.      
\end{proof}

\begin{proof}[End of the proof of Theorem \ref{bridgelandThm}]
By Theorem \ref{excBridgeThm} and Corollary \ref{root5Cor}, any exceptional bundle is $(s,t)$-semistable outside a wall of radius smaller than $\sqrt{5}/2$. It remains to show the same is true for $W[1]$.  Using the notation from this section, we can choose integers $k_1, k_2, p,q$ such that $W$ has a resolution of the form 
$$0 \to W \to E_{\zeta_0}^{k_1}\to E_{\alpha}^{k_2}\to 0,$$ and we may assume $p\equiv 0 \pmod 4$.  We obtain an exact sequence $$0\to E_{\alpha}^{k_2}\to W[1]\to E_{\zeta_0}^{k_1}[1]\to 0$$ valid in $\cA_s$ for any $s$ with $\zeta_0 < s< \alpha$ (noting that $\mu(W)\leq \zeta_0$).    Case 1 of the proof of Theorem \ref{excBridgeThm} shows $E_\alpha$ and $E_{\zeta_0}[1]$ are semistable along the wall $W_{E_\alpha,E_{\zeta_0}}=W_{E_\alpha,E_\beta}$.  But this wall has radius smaller than $\sqrt{5}/2$ by Corollary \ref{root5Cor}.
 \end{proof}

\bibliographystyle{plain}

\begin{thebibliography}{ABCH}

\bibitem[AP]{AbramPol}
D. Abramovich\ and\ A. Polishchuk, Sheaves of $t$-structures and valuative criteria for stable complexes, J. Reine Angew. Math. {\bf 590} (2006), 89--130.

\bibitem[ACK]{ConsulKing}
L. \'Alvarez-C\'onsul\ and\ A. King, A functorial construction of moduli of sheaves, Invent. Math. {\bf 168} (2007), no.~3, 613--666.

\bibitem[AB]{ArcaraBertram}
D. Arcara, A. Bertram. Bridgeland-stable moduli spaces for $K$-trivial surfaces, with an appendix by Max Lieblich, to appear J. Eur. Math. Soc.

\bibitem[ABCH]{ABCH}
D. Arcara, A. Bertram, I. Coskun, and J. Huizenga.
\newblock The minimal model program for the Hilbert scheme of points on $\mathbb{P}^2$ and Bridgeland stability.  Preprint.


\bibitem[BM]{BayerMacri}
A. Bayer and\ E. Macr\`i, The space of stability conditions on the local projective plane, Duke Math. J. {\bf 160} (2011), no.~2, 263--322.

\bibitem[BM2]{BayerMacri2}
A.~Bayer and E.~Macr\`{i}. Projectivity and birational geometry of Bridgeland
moduli spaces.  Preprint.

\bibitem[Brm1]{Brambilla2}
M. C. Brambilla, Cokernel bundles and Fibonacci bundles, Math. Nachr. {\bf 281} (2008), no.~4, 499--516. 

\bibitem[Brm2]{Brambilla1}
M. C. Brambilla, Simplicity of generic Steiner bundles, Boll. Unione Mat. Ital. Sez. B Artic. Ric. Mat. (8) {\bf 8} (2005), no.~3, 723--735. 

\bibitem[Bri]{Bridgeland}
T. Bridgeland, Stability conditions on $K3$ surfaces, Duke Math. J. {\bf 141} (2008), no.~2, 241--291.

\bibitem[C]{MO3} H. Cohn, Last term of repeating continued fraction expansion, URL (2012) mathoverflow.net/q/106217.

\bibitem[Da]{Davenport}
H. Davenport, {\it The higher arithmetic}, eighth edition, Cambridge Univ. Press, Cambridge, 2008. 

\bibitem[DW]{DerksenWeyman}
Derksen, Harm; Weyman, Jerzy. Semi-invariants of quivers and saturation for Littlewood-Richardson coefficients. J. Amer. Math. Soc. 13 (2000), no. 3, 467--479 (electronic).

\bibitem[Dr1]{DrezetBeilinson}
J.-M. Drezet, Fibr\'es exceptionnels et suite spectrale de Beilinson g\'en\'eralis\'ee sur ${\bf P}\sb 2({\bf C})$, Math. Ann. {\bf 275} (1986), no.~1, 25--48.

\bibitem[Dr2]{Drezet}
J.-M. Drezet, Fibr\'es exceptionnels et vari\'et\'es de modules de faisceaux semi-stables sur ${\bf P}\sb 2({\bf C})$, J. Reine Angew. Math. {\bf 380} (1987), 14--58. 



\bibitem[DLP]{DLP}
J.-M. Drezet\ and\ J. Le Potier, Fibr\'es stables et fibr\'es exceptionnels sur ${\bf P}\sb 2$, Ann. Sci. \'Ecole Norm. Sup. (4) {\bf 18} (1985), no.~2, 193--243. 
\bibitem[E]{Eisenbud}
D. Eisenbud, {\it The geometry of syzygies}, Graduate Texts in Mathematics, 229, Springer, New York, 2005. 



\bibitem[F1]{Fogarty1}
J. Fogarty, Algebraic families on an algebraic surface, Amer. J. Math {\bf 90} (1968), 511--521. 

\bibitem[F2]{Fogarty2}
J. Fogarty, Algebraic families on an algebraic surface. II. The Picard scheme of the punctual Hilbert scheme, Amer. J. Math. {\bf 95} (1973), 660--687. 

\bibitem[GH]{GottscheHirschowitz} L. G\"ottsche\ and\ A. Hirschowitz, Weak Brill-Noether for vector bundles on the projective plane, in {\it Algebraic geometry (Catania, 1993/Barcelona, 1994)}, 63--74, Lecture Notes in Pure and Appl. Math., 200 Dekker, New York.





\bibitem[LP]{LePotierLectures}
J. Le Potier, {\it Lectures on vector bundles}, translated by A. Maciocia, Cambridge Studies in Advanced Mathematics, 54, Cambridge Univ. Press, Cambridge, 1997. 

\bibitem[L]{lieblich}
M. Lieblich, Moduli of complexes on a proper morphism, J. Algebraic Geom. {\bf 15} (2006), no.~1, 175--206.


\bibitem[H1]{MO1} J. Huizenga, Homeomorphisms of the rationals, URL (2012) mathoverflow.net/q/105758.

\bibitem[H2]{HuizengaPaper}
J. Huizenga, Restrictions of Steiner bundles and divisors on the Hilbert scheme of points in the plane, Int. Math. Res. Not. (2012).

\bibitem[H3]{thesis}
J. Huizenga, {\em Restrictions of Steiner bundles and divisors on the Hilbert scheme of points in the plane.}  Ph.D. thesis, Harvard University, 2012.

\bibitem[O]{Ottaviani}
G. Ottaviani, {\em Variet\`a proiettive di codimensione piccola}, Ist. nazion. di alta matematica F. Severi, 2, Aracne, Rome, 1995.


\bibitem[Sa]{MO2} D. Savitt, Palindromic continued fraction, URL (2012) mathoverflow.net/q/106279.

\bibitem[Sc]{Schofield}
A. Schofield, Semi-invariants of quivers, J. London Math. Soc. (2) {\bf 43} (1991), no.~3, 385--395.

\bibitem[SvdB]{SvdB}
A. Schofield\ and\ M. van den Bergh, Semi-invariants of quivers for arbitrary dimension vectors, Indag. Math. (N.S.) {\bf 12} (2001), no.~1, 125--138. 

\bibitem[T]{toda}
Y. Toda, Moduli stacks and invariants of semistable objects on $K3$ surfaces, Adv. Math. {\bf 217} (2008), no.~6, 2736--2781.

\end{thebibliography}

\end{document}